\newcommand{\wlp}{{\begin{tikzpicture}[x=4.3pt,y=4.3pt] 
\draw (0,0) rectangle (1,1);
\draw (0,0) -- (1,1);
\end{tikzpicture}}}
\newcommand{\alp}{{\begin{tikzpicture}[x=4.3pt,y=4.3pt]
\draw (0,0) -- (0,1) -- (1,1) -- (1,0) -- (0,0);
\draw (0,0.3) -- (0.7,1);
\draw (0.3,0) -- (1,0.7);
\end{tikzpicture}}}
\newcommand{\R}{\mathbb{R}}
\newcommand{\Lb}{\mathbb{L}}
\newcommand{\Rb}{\mathbb{R}}
\newcommand{\Eb}{\mathbb{E}}
\newcommand{\Acal}{\mathcal{A}} 
\newcommand{\Ecal}{\mathcal{E}} 
\newcommand{\Rcal}{\mathcal{R}} 
\newcommand{\Tcal}{\mathcal{T}}
\newcommand{\Dcal}{\mathcal{D}}
\newcommand{\Lcal}{\mathcal{L}} 
\newcommand{\Wcal}{\mathcal{W}} 
\newcommand{\Ccal}{\mathcal{C}} 
\newcommand{\Bcal}{\mathcal{B}}
\theoremstyle{plain}
\newtheorem{theorem}{Theorem}[section]
\newtheorem{lemma}[theorem]{Lemma}
\newtheorem{prop}[theorem]{Proposition}
\newtheorem*{prop*}{Proposition}
\newtheorem{cor}[theorem]{Corollary}
\theoremstyle{definition}
\newtheorem{definition}[theorem]{Definition}
\newtheorem{remark}[theorem]{Remark}
\newtheorem{example}[theorem]{Example}
\newtheorem{assumption}[theorem]{Assumption}
\newtheorem{construction}[theorem]{Construction}
\newtheorem{notation}[theorem]{Notation}
\newtheorem{proof1}[theorem]{Proof of \cref{th:main_saturation}}
\newcommand{\corner}[1]{\,\tikz[baseline=(todotted.base)]{
        \node[inner sep=1pt,outer sep=0pt] (todotted) {$#1$};
        \draw (todotted.north west) -- (todotted.north east);
        \draw (todotted.south west) -- (todotted.north west);
    }}
\newcommand{\id}{\text{Id}}
\newcommand{\set}{\text{Sets}}
\newcommand{\ano}{a}
\newcommand{\cat}{\text{Cat}}
\newcommand{\cof}{\text{Cof}}
\newcommand{\op}{\text{op}}
\DeclareMathOperator{\Hom}{Hom}
\newcommand\footnoteref[1]{\protected@xdef\@thefnmark{\ref{#1}}\@footnotemark}
\newcommand{\PC}[3]{\text{#3}^\text{#1}_{#2}}
\begin{document}

\pagestyle{plain}
\title{Combinatorial and accessible weak model categories}

\date{}

\author{Simon Henry}


\maketitle
\vspace{-1cm}

\begin{abstract} In a previous work, we have introduced a weakening of Quillen model categories called weak model categories. They still allow all the usual constructions of model category theory, but are easier to construct and are in some sense better behaved. In this paper we continue to develop their general theory by introducing combinatorial and accessible weak model categories. We give simple necessary and sufficient conditions under which such a weak model category can be extended into a left and/or right semi-model category. As an application, we recover Cisinski-Olschok theory and generalize it to weak and semi-model categories. We also provide general existence theorems for both left and right Bousfield localization of combinatorial and accessible weak model structures, which combined with the results above gives existence results for left and right Bousfield localization of combinatorial and accessible left and right semi-model categories, generalizing previous results of Barwick. Surprisingly, we show that any left or right Bousfield localization of an accessible or combinatorial Quillen model category always exists, without properness assumptions, and is simultaneously both a left and a right semi-model category, without necessarily being a Quillen model category itself.
\end{abstract}

\vspace{-0.2cm}

\renewcommand{\thefootnote}{\fnsymbol{footnote}} 
\footnotetext{\emph{2020 Mathematics Subject Classification.} 55U35,18N40, 18C35, 18N55. \emph{Keywords.} Model categories, weak factorization system, locally presentable categories, localizations.}
\footnotetext{\emph{email:} shenry2@uottawa.ca}
\renewcommand{\thefootnote}{\arabic{footnote}}


\tableofcontents

\section{Introduction}

In \cite{henry2018weakmodel} we have introduced a notion of ``weak model category'' which, as its name suggests, is a weakening of the notion of Quillen model category. Its main advantage is that it is considerably easier to endow a given category with a weak model structure than a full Quillen model structure, and the weak version is sufficient for almost all practical purposes.

In this first work, we mostly focused on the constructive aspects of the theory. The present paper will be devoted with the, mostly non-constructive, theory of ``combinatorial'' and ``accessible'' weak model categories, where the underlying category is locally presentable and the factorization systems are combinatorial, i.e. cofibrantly generated by a set of arrows,  or just accessible, i.e. admits a functorial factorization which is accessible, or equivalently are cofibrantly generated by a small category in the sense of R.~Garner's algebraic small object argument \cite{garner2009understanding}.

\bigskip 

We will generalize well known results of the theory of combinatorial Quillen model categories:

\begin{itemize}

\item We will extend the theory of left and right Bousfield localization (\cref{sec:Bousfield}) to accessible weak model categories.

\item We will give simple necessary and sufficient conditions under which a weak model category is a left or a right semi-model category in the sense of \cite{barwick2010left} (previously introduced in \cite{spitzweck2001operads}) in \cref{sec:WMS_with_cylinder}.

\item We will show, in \cref{sec:changing_combinatorial}, that for combinatorial and accessible weak model categories some of these conditions can always be enforced by small modifications of the model structure (called saturation) that do not affect any homotopy theoretic properties.

\item Combining these, we will also obtain results for left and right Bousfield localization of accessible left and right semi-model categories considerably more general than those in \cite{barwick2010left} and \cite{batanin2020Bousfield}. The extra generality being both that we are now able to take localizations of accessible model categories (and not just combinatorial) and that we can consider both left and right localizations of left and right semi-model categories.

\item We will present in \cref{sec:Cisinki_Olschok} a generalized version of Cisinski-Olschok's theory to the setting of accessible weak model categories, which allows one to construct weak, left, right or, in some cases, even Quillen model structures on categories that admit well behaved cylinder functors. This, for example, applies automatically to all tensored and cotensored simplicially enriched categories.

\item In \cref{sec:Two_sided} we study a somewhat surprising phenomenon: some weak model categories have both the necessary properties to be a left semi-model category and a right semi-model category, but can still fail to be Quillen model categories due to a subtle incompatibility between the two structures. In this case the left and right semi-model structures are Quillen equivalent, but they have different classes of weak equivalences. We call such categories two-sided semi-model categories, or simply two-sided model categories. Such examples typically arise when taking Bousfield localization of Quillen model categories that do not satisfy the usual properness assumptions.

\item Finally, the two appendices review many results about constructions on locally presentable categories (\cref{sec:lkp_cat}) as well as combinatorial and accessible weak factorization systems (\cref{sec:apendix_awfs}).

\end{itemize}

\bigskip

I believe that this new point of view of weak model categories is considerably simplifying the theory of model categories in general by clearly separating the relevant homotopy theoretic data (encoded by weak model structures) from some other more technical aspects (saturation properties, existence of strong path and cylinder objects) that are needed to obtain left, right or Quillen model categories.

\bigskip

In a separate paper, \cite{bourke2020fibrant} (joint work with J.~Bourke), we generalize T.~Nikolaus' construction \cite{nikolaus2011algebraic} of the model structure on algebraically fibrant objects, and the ``dual'' model structure by E.~Riehl and M.~Ching on coalgebraically cofibrant objects from \cite{ching2014coalgebraic} to arbitrary accessible weak model categories. This allows for a completely self dual treatment of the two constructions. We also use these constructions to show that every accessible weak model category is connected by a zig-zag of Quillen equivalences to both a left and a right accessible semi-model category and that every every combinatorial weak model category is connected by a zig-zag of Quillen equivalences to a Quillen model category. 

Finally, a paper in preparation \cite{henry2020model} we will use the technology developed in the present paper to provide a more complete solution to M.~Hovey's problem of constructing a ``model category of model categories'' similar to the construction in \cite{barton2020model}. More precisely we will construct several right semi-model structures on the category of premodel categories and left Quillen functors between them whose fibrant objects are various type of model categories (e.g. weak model categories, left semi-model categories, right proper left semi-model categories). If one works with simplicially enriched premodel categories, these can be made into Quillen model structures as in \cite{barton2020model}.

\bigskip

Finally, while the main topic of this paper, combinatorial and accessible weak model categories is deeply non-constructive, some part of the paper hold in constructive framework, in fact in the same sort of very weak predicative framework as discussed in \cite{henry2018weakmodel}.
Most of the non-constructivity is in fact concentrated in the saturation construction of \cref{sec:changing_combinatorial}. The results of \cref{sec:Bousfield} are also non-constructive, mostly because they rely on the saturation constructions and on the general form of the small object argument in locally presentable categories. The appendices are also completely non-constructive. This being said, all the results of section \ref{sec:WMS_with_cylinder} (the recognition theorem for semi-model categories) and of section \ref{sec:Two_sided} on two-sidded model categories and the recognition of Quillen model categories, are constructive. With the exception of \cref{cor:Olschoktheorem}, and of some remarks and comment that uses the saturation construction of \cref{sec:changing_combinatorial} the results of \cref{sec:Cisinki_Olschok} are also constructive.

\section{Preliminaries and terminology}

In this section we give the definition of combinatorial and accessible weak model categories and introduce some terminology that will be used everywhere in the paper. We also briefly recall the theory of weak model categories from \cite{henry2018weakmodel} and prove some general lemmas about these notions.

$\lambda$ and $\kappa$ always denotes \emph{regular} cardinals. We generally use $\kappa$ for an \emph{uncountable} regular cardinal and $\lambda$ for an arbitrary one. A set (or category) is said to be $\lambda$-small if it has cardinality strictly less than $\lambda$.

\begin{definition}\label{def:combinatorialCat}\begin{itemize} 
\item[]
\item A \emph{premodel category} is a complete and cocomplete category with two weak factorization systems respectively called ``(cofibrations, anodyne fibrations)'' and ``(anodyne cofibrations, fibrations)'', such that all anodyne cofibrations are cofibrations, or equivalently all anodyne fibrations are fibrations.

\item A premodel category is said to be \emph{$\lambda$-combinatorial} (resp. \emph{$\lambda$-accessible}) if its underlying category is locally $\lambda$-presentable and both its weak factorization systems are $\lambda$-combinatorial (resp. $\lambda$-accessible) according\footnote{See also Theorem~\ref{th:acc_wfs_eq_def} for an equivalent definition in the accessible case.} to \cref{def:lambda_comb_wfs}.

\item A premodel category is said to be \emph{combinatorial} (resp. \emph{accessible}) if it is $\lambda$-combinatorial (resp. $\lambda$-accessible) for some regular cardinal $\lambda$.

\end{itemize}

\end{definition}

This terminology comes from R.~Barton's PhD thesis \cite{barton2020model}. We both introduced these notions independently, but I latter decided to align the terminology of the present article to his.

Note that a $\lambda$-combinatorial premodel category is also $\lambda$-accessible, and it is $\kappa$-combinatorial for any $\kappa \geqslant \lambda$. Similarly a $\lambda$-accessible premodel category is $\kappa$-accessible for any $\kappa \geqslant \lambda$.

\begin{definition}\label{Def:combinatorialWMS}\begin{itemize}
\item[]
\item A premodel category is said to be a \emph{weak model category} if its cofibrations between cofibrant objects and fibrations between fibrant objects form a weak model structure in the sense of \cite{henry2018weakmodel}.

\item A ($\lambda$-)accessible or ($\lambda$-)combinatorial weak model category is a ($\lambda$-)accessible or ($\lambda$-)combinatorial premodel category which is a weak model category.

\end{itemize}

\end{definition}

\begin{remark}The definition and basic theory of weak model categories from \cite{henry2018weakmodel} will be recalled bellow. We also mention that in \cite{henry2018weakmodel} the definition of weak model categories does not include neither (co)-completeness, nor the existence of fully formed weak factorization systems. In the present paper we only use the more restrictive situation of ``premodel categories that are weak model categories'' in the sense above. To clarify the distinction we will call them \emph{factorization weak model categories}. That is, a factorization weak model category is always complete and cocomplete and its cofibrations and fibrations fits into two weak factorization systems. All combinatorial and accessible weak model categories are factorization weak model categories.
\end{remark}

\begin{definition}\label{def:acyclic_map}
Given a premodel category $\Ccal$ one says that:

\begin{itemize}

\item A morphism is an \emph{acyclic cofibration} if it is a cofibration and if it has the left lifting property against all fibrations between fibrant objects.

\item A morphism is an \emph{acyclic fibration} if it is a fibration and if it has the right lifting property against all cofibrations between cofibrant objects.

\end{itemize}

Note that anodyne (co)fibrations are in particular acyclic (co)fibrations.

\end{definition}

\begin{notation} In diagrams, the arrows belonging to these various classes will be represented as follows:

\[ \begin{array}{|c|c|c|} \hline
\text{Cofibrations:}  & \text{Anodyne cofibrations:} & \text{Acyclic cofibrations:} \\[5pt]
 A \hookrightarrow B & A \displaystyle \overset{\ano}{\hookrightarrow} B  & A \overset{\sim}{\hookrightarrow} B \\[3pt] \hline

\text{Fibrations:} & \text{Anodyne fibrations:} & \text{Acyclic fibrations:}\\[5pt]
X \twoheadrightarrow Y  & X \overset{\ano}{\twoheadrightarrow} Y & X \overset{\sim}{\twoheadrightarrow} Y \\ \hline

\end{array} \]

\end{notation}

\begin{remark}\label{rk:def_WMS}The definition of weak model structure given in \cite{henry2018weakmodel} can be spelled out explicitly (in the special case of premodel categories) as follows: a premodel category is a weak model category if and only if it satisfies the cylinder axiom and path object axiom recalled below.

\begin{itemize}

\item \emph{Cylinder axiom:} Every cofibration $A \hookrightarrow X$ from a cofibrant object to a fibrant objects admits a ``relative strong cylinder object'':

\[ X \coprod_A X \hookrightarrow I_A X \rightarrow X \]

such that $X \coprod_A X \hookrightarrow I_A X$ is a cofibration and the first map $X \overset{\sim}{\hookrightarrow} I_A X$ is an acyclic cofibration.

\item \emph{Path object axiom:} Every fibration $A \twoheadrightarrow X$ from a cofibrant object to a fibrant object admits a ``relative strong path object'':

\[ A \rightarrow P_X A \twoheadrightarrow A \times_X A \]

such that  $P_X A \twoheadrightarrow A \times_X A$ is a fibration and the first map $P_X A \overset{\sim}{\twoheadrightarrow} A$ is an acyclic fibration.

\end{itemize}

Indeed, this is exactly definition 2.1.12 of \cite{henry2018weakmodel} where we have removed the factorization axiom which is included in the fact that we are already working with a premodel category.

\end{remark}

\begin{remark}\label{Rk:WMS_onlydepends_on_core} The definition of weak model category given above does not involve the full data of the two weak factorization systems: it only depends on the cofibrations between cofibrant objects and the fibration between fibrant objects. In fact, the point of view of \cite{henry2018weakmodel} was that the notion of cofibration only really make sense if the domain is cofibrant and the notion of fibration only make sense if the target is fibrant. In particular, everything that has been studied in \cite{henry2018weakmodel} only ever involves the cofibrations between cofibrant objects and fibrations between fibrant objects. Now that we work in the accessible/combinatorial case where we always have fully formed weak factorization systems this restriction is no longer present, but to emphasize the importance they have in the theory we introduce the following terminology: \end{remark}

\begin{definition} Given $\Ccal$ a premodel category:

\begin{itemize}

\item A \emph{core} cofibration is a cofibration with cofibrant domain.

\item A \emph{core} fibration is a fibration with fibrant target.

\item The \emph{homotopical core} or simply the \emph{core} of $\Ccal$ is the full subcategory of fibrant or cofibrant objects endowed with the class of core cofibrations and the class of core fibrations.

\item The \emph{left homotopical core}, or simply the \emph{left core} of $\Ccal$ is the full subcategory of cofibrant objects endowed with the class of core cofibrations and acyclic core cofibrations.

\item The \emph{right homotopical core}, or simply the \emph{right core} of $\Ccal$ is the full subcategory of fibrant objects endowed with the class of fibrations and acyclic core fibrations.

\end{itemize}

\end{definition}

More generally, one will call ``core'' a property or notion defined for premodel categories that only depends on their homotopical core. For example core acyclic fibrations are a core notion as they are the core fibrations with the right lifting property against core cofibrations. It should be clear from \cref{Rk:WMS_onlydepends_on_core} that being a weak model category is a core property: given two premodel categories with equivalent core, then one is a weak model category if and only if the other is also one.

We now recall some elements of the theory of weak model categories from \cite{henry2018weakmodel}. We hope this will convince the reader that it is indeed a reasonable notion, which is close enough to the notion of a Quillen model category.

\begin{construction}\label{intro_localization} We showed in Section 2.1 and 2.2 of \cite{henry2018weakmodel} that in a weak model category as above one can define the homotopy relation between maps from a cofibrant object to a fibrant object, either using a weak\footnote{see \cref{def:weak_cylinder}} cylinder object or a weak path object, and that this is an equivalence relation compatible with composition (and not depending on the choice of the cylinder or path objects). This allows one to directly define the ``homotopy category'' :

\[Ho(\Ccal)\]

of a weak model category $\Ccal$ as the category of fibrant-cofibrant objects of $\Ccal$, with homotopy classes of maps between them. Theorem 2.2.6 of \cite{henry2018weakmodel} shows that this category $Ho(\Ccal)$ is equivalent to various formal localizations:

\begin{itemize}

\item The localization of the category of cofibrant objects at the class of core acyclic cofibrations.

\item The localization of the category of fibrant objects at the class of core acyclic fibrations.

\item The localization of the category $\Ccal^{c \vee f}$ of objects that are either fibrant or cofibrant at the class of core acyclic cofibrations and core acyclic fibrations.

\end{itemize}

In particular, when $\Ccal$ is a weak model category, one can define a notion of ``(weak) equivalence'' in $\Ccal$ for arrows between fibrant or cofibrant objects as the arrows that become invertible in this localization. \end{construction}

We proved in \cite{henry2018weakmodel}:

\begin{theorem} Let $\Ccal$ be a weak model category, $\Ccal^{c \vee f}$ its full subcategory of fibrant or cofibrant objects and $\Wcal$ the class of arrows in $\Ccal^{c \vee f}$ that becomes isomorphisms in the homotopy category, then:

\begin{itemize}

\item $\Wcal$ satisfies the $2$-out-of-$3$ property, the stronger $2$-out-of-$6$ property, is closed under retracts and contains all the isomorphisms.

\item A core cofibration is acyclic if and only if it is in $\Wcal$.

\item A core fibration is acyclic if and only if it is in $\Wcal$.

\end{itemize}
\end{theorem}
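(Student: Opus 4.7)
The overall strategy is to observe that the first bullet is a formal consequence of the definition $\Wcal = F^{-1}(\text{iso})$ for the canonical localization functor $F: \Ccal^{c \vee f} \to Ho(\Ccal)$ from \cref{intro_localization}, while the remaining two bullets follow from the identifications of $Ho(\Ccal)$ with the three localizations listed in the same construction (whose equivalence is Theorem 2.2.6 of \cite{henry2018weakmodel}).

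For the first bullet, I would simply observe that in any category the class of isomorphisms contains identities, is closed under retracts, and satisfies both the $2$-out-of-$3$ and $2$-out-of-$6$ properties (the latter being the elementary fact that if $gf$ and $hg$ are iso then $g$ has a right inverse $f(gf)^{-1}$ and a left inverse $(hg)^{-1}h$, hence is iso, and then so are $f$ and $h$). Each of these properties is preserved under preimages along any functor, so $\Wcal$ inherits them.

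For the second and third bullets, the ``only if'' directions are immediate from the universal property of localization: since $Ho(\Ccal)$ is obtained from the category of cofibrant objects (resp.\ of fibrant objects) by formally inverting the core acyclic cofibrations (resp.\ fibrations), any such map becomes invertible and hence lies in $\Wcal$. For the ``if'' direction of the second bullet, I would take a core cofibration $f: A \hookrightarrow B$ (so $A$ cofibrant, and then $B$ is cofibrant as a pushout of an initial object along a cofibration) with $[f]$ invertible in $Ho(\Ccal)$, and verify that $f$ has the left lifting property against every fibration $p: X \twoheadrightarrow Y$ between fibrant objects. Since $[f]$ is iso in $Ho(\Ccal)$, the homotopy relation between maps out of $A$ or $B$ into fibrant targets (which by the cited construction is well-defined and can equivalently be computed using weak path objects of $X$ and $Y$) gives a formal homotopy inverse to $f$; using the path object axiom to factor $X \to Y$ through an appropriate $P_Y X$, one can then rigidify the resulting ``up-to-homotopy'' lift into a strict lift. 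The third bullet is entirely dual, using the cylinder axiom on the source of a core cofibration $A \hookrightarrow B$ between cofibrant objects to rigidify an up-to-homotopy section into a strict one.

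The main obstacle is the last step: upgrading the abstract invertibility of $[f]$ in $Ho(\Ccal)$ into an actual lifting property against all fibrations between fibrant objects. This is precisely the place where the cylinder and path object axioms of a weak model category are used in an essential way, and is the content of the corresponding theorem in \cite{henry2018weakmodel}; the whole point of the weak model axioms is to make this step go through without the full strength of Quillen's axioms.
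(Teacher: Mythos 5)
This theorem is stated in the paper without proof: it is recalled from \cite{henry2018weakmodel} (the sentence immediately preceding it reads ``We proved in \cite{henry2018weakmodel}:''), so there is no in-paper argument to compare against line by line. Your outline is nevertheless a correct reconstruction of the intended proof and follows the same route as the cited one: the first bullet is purely formal from $\Wcal = F^{-1}(\mathrm{iso})$, the ``only if'' halves of the last two bullets follow from the universal property of the localizations listed in \cref{intro_localization}, and the real content is the ``if'' halves, where invertibility of $[f]$ in $Ho(\Ccal)$ must be upgraded to an actual lifting property via a homotopy lift that is then rigidified using relative path objects (dually, cylinders). You correctly isolate that step as the one where the weak model axioms do the work, though you leave it at the level of a sketch, deferring to \cite{henry2018weakmodel} exactly as the paper does. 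One small slip: $B$ is cofibrant not ``as a pushout of an initial object along a cofibration'' but because $\emptyset \to B$ is the composite of the cofibrations $\emptyset \hookrightarrow A$ and $A \hookrightarrow B$; the conclusion is of course unaffected.
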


\begin{remark}\label{rk:equivalence_implies_WMS} Note that conversely, having a class of equivalences satisfying these three properties implies that $\Ccal$ is a weak model category: if $A \hookrightarrow B$ is a cofibration with $A$ cofibrant and $B$ fibrant, then we factor the codiagonal map:

\[ B \coprod_A B \hookrightarrow I_A B \overset{\ano}{\twoheadrightarrow} B \]

and by $2$-out-of-$3$ for equivalences, both cofibrations $B \hookrightarrow I_A B$ are equivalences, and hence are acyclic cofibrations, so this factorization provides a relative strong cylinder object for $A \hookrightarrow B$. The similar argument applies to path objects.

Moreover, the class of equivalences is the only class having these three properties: Given an arrow $f\colon X \rightarrow Y$ between objects that are either fibrant or cofibrant, we construct a cofibrant replacement $X^{cof}$ of $X$ if $X$ is fibrant, and a fibrant replacement $Y^{fib}$ if $Y$ is cofibrant, and let $f'$ be the composite $f'\colon X^{cof} \rightarrow Y^{fib}$, and consider $f'=p i$ a factorization of $f'$ as a cofibration $i$ followed by an anodyne fibration $p$. Then for any class $W$ having the stability property mentioned above we have:

 \[ f \in W \Leftrightarrow f' \in W \Leftrightarrow i \text{ is an acyclic cofibration.} \]

which shows the uniqueness of the class $W$, as the third condition does not depend on $W$.
\end{remark}

\begin{remark} In the definition of weak model category, we only assumed the existence of ``strong'' path objects and cylinder objects for cofibrations and fibrations from a cofibrant to a fibrant object. In particular this only allows to construct (strong) path objects and cylinder objects for bifibrant objects. However, this is enough to construct a weaker kind of cylinder and path objects for respectively general core cofibrations and core fibrations. These are the ``relative weak path object and cylinder object'' that are introduced in  \cite{henry2018weakmodel} (Definition 2.1.12 and Remark 2.1.13.) which we now recall:
\end{remark}

\begin{definition}\label{def:weak_cylinder} A \emph{relative weak cylinder object} for a core cofibration $i\colon A\hookrightarrow B$ is a diagram of the form:

\[  \begin{tikzcd}
    B \coprod_A B \ar[d,"\nabla"] \ar[r,hook] & I_A B \ar[d] \\
 B \ar[r,hook,"\sim"] & D_A B \\
  \end{tikzcd} \]

where $\nabla$ denote the codiagonal maps and furthermore the first map $B \hookrightarrow I_A B$ is an acyclic cofibration.

\end{definition}

The notion of strong cylinder corresponds to the special case in which the map $B \overset{\sim}{\hookrightarrow} D_A B$ is an isomorphism. One talks about weak cylinder object, for a cofibration of the form $0 \hookrightarrow B$ where $0$ is the initial object, and of course one has a dual notion of weak path objects and relative weak path objects.

It is shown in section 2.1 of \cite{henry2018weakmodel} that we can use weak cylinder and path object instead of the strong ones to define the homotopy relation between maps. 

\begin{construction}\label{constr:rel_weak_strong_cylinder} If a core cofibration $i\colon A \hookrightarrow B$ admits a relative weak cylinder object as in \cref{def:weak_cylinder}, and $B$ is further assumed to be fibrant, then the acyclic cofibration $B \overset{\sim}{\hookrightarrow} D_A B$ will have a section $s$, and the composite: 

\[B \coprod_A B \hookrightarrow I_A B \rightarrow D_A B \overset{s}{\rightarrow} B \]

is a relative strong cylinder object for $i$. Conversely, consider $B \overset{\ano}{\hookrightarrow} B^{fib}$ a fibrant replacement for $B$, if the composite $A \hookrightarrow B^{fib}$ has a relative strong cylinder object $I_A B^{fib}$ then the diagram:

 \[  \begin{tikzcd}
    B \coprod_A B \ar[d,"\nabla"] \ar[r,hook] & I_A B^{fib} \ar[d] \\
 B \ar[r,hook,"\ano"] & B^{fib} \\
  \end{tikzcd} \]

is a relative weak cylinder object. Hence the assumption that any core cofibration has a relative weak cylinder object or that any core cofibration with fibrant target has a relative strong cylinder object are equivalent. \end{construction}

Section 2.3 of \cite{henry2018weakmodel} contains various equivalent definitions of weak model categories (see also Proposition 2.2.10 in \cite{henry2018weakmodel}), some of them seemingly weaker than \ref{rk:def_WMS}, that will be of interest to us. We recall one of them in the following proposition, which has an interesting consequence: for a premodel category $\Ccal$, being a weak model category actually only depends on the left core of $\Ccal$ (i.e. cofibrant objects, core cofibrations and core acyclic cofibrations) or on the right core of $\Ccal$ (i.e. fibrant objects, core fibrations and core acyclic fibrations).

\begin{prop}\label{prop:Model_str_from_3from2} Let $\Ccal$ be a premodel category. Then it is a weak model category if and only if:

\begin{itemize}

\item It satisfies the cylinder axiom, i.e. every core cofibration has a relative weak cylinder object.

\item Acyclic cofibrations satisfies the right cancellation property amongst core cofibration, that is given $i,j$ two composable core cofibrations, if  $j$ and $i \circ j$ are acyclic, then $i$ is acyclic. 

\end{itemize}

Dually, it is also equivalent to the validity of the path object axiom and the left cancellation properties for acyclic fibrations amongst core fibrations.

\end{prop}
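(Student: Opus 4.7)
The plan is to prove the two directions of the first equivalence separately, with the backward direction being the substantive one; the dual statement follows by a symmetric argument.

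For the forward direction, assume $\Ccal$ is a weak model category. The cylinder axiom (relative weak cylinder objects for every core cofibration) follows immediately from \cref{constr:rel_weak_strong_cylinder}: given a core cofibration $i\colon A \hookrightarrow B$, pick an anodyne fibrant replacement $B \overset{\ano}{\hookrightarrow} B^{fib}$; the composite $A \hookrightarrow B^{fib}$ is a cofibration from a cofibrant object to a fibrant object and therefore admits a relative strong cylinder object by definition, which the construction converts into a relative weak cylinder object for $i$. For the right cancellation property, I invoke the theorem recalled just before: $\Wcal$ satisfies $2$-out-of-$3$ and core acyclic cofibrations are precisely the core cofibrations in $\Wcal$. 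Hence if $j$ and $i \circ j$ are acyclic core cofibrations, $2$-out-of-$3$ applied to $\Wcal$ forces $i \in \Wcal$, and since $i$ is already a core cofibration, it is acyclic.

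For the backward direction, assume the cylinder axiom and right cancellation; by \cref{rk:def_WMS} it suffices to derive the path object axiom. Given a fibration $f\colon A \twoheadrightarrow X$ with $A$ cofibrant and $X$ fibrant, factor the diagonal $A \to A \times_X A$ through the (anodyne cofibration, fibration) factorization to obtain
\[ A \overset{\ano}{\hookrightarrow} P_X A \twoheadrightarrow A \times_X A. \]
Then $P_X A$ is cofibrant (as $A$ is and anodyne cofibrations are cofibrations), $A$ is fibrant (as the composite of the fibrations $A \twoheadrightarrow X \twoheadrightarrow \ast$), and so $A \times_X A$ is fibrant as a pullback of fibrations between fibrant objects. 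The first projection $p\colon P_X A \twoheadrightarrow A$ is a fibration admitting the anodyne (hence acyclic) cofibration $s\colon A \overset{\ano}{\hookrightarrow} P_X A$ as a section, and the problem reduces to showing $p$ is an acyclic fibration, i.e., that it has the right lifting property against every core cofibration.

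This last step is the main obstacle, as it requires bootstrapping from the purely ``left'' data of cofibrations and cylinders to a statement about lifting against cofibrations, and is handled in detail in \cite[Section~2.3]{henry2018weakmodel}. The strategy is to first use the cylinder axiom to define a left homotopy relation on the full subcategory of cofibrant objects via relative weak cylinder objects, then use the right cancellation property to show that this relation is compatible with composition and with the factorizations coming from the cylinder axiom --- the cancellation being exactly what is needed to verify that various intermediate cofibrations produced when gluing cylinders are acyclic. Once this is in place, given a core cofibration $C \hookrightarrow D$ and a lifting problem against $p$, one uses a relative weak cylinder object for $C \hookrightarrow D$ to interpolate between the ``wrong'' lift obtained by composing the bottom map with $s$ and the given top map, solves the resulting extension problem using the lifting property of acyclic cofibrations against fibrations, and reads off the desired lift; the right cancellation property is what makes the homotopy data produced by this interpolation itself acyclic, closing the loop.
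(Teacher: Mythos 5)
The first thing to note is that the paper itself does not prove this proposition: it is explicitly recalled from Section~2.3 of \cite{henry2018weakmodel} (see Proposition~2.2.10 there), so there is no in-paper argument to compare yours against. Your forward direction is correct and complete: deriving the weak cylinder axiom from \cref{constr:rel_weak_strong_cylinder}, and the right cancellation property from $2$-out-of-$3$ for $\Wcal$ together with the fact that a core cofibration is acyclic if and only if it lies in $\Wcal$, uses exactly the results recalled in the surrounding text. The remark that the dual statement follows by passing to $\Ccal^{\op}$ is also fine, since premodel and weak model structures are self-dual notions.

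The backward direction, however, is where all the content of the proposition lives, and you have not actually proven it. Your reduction is correct as far as it goes: by \cref{rk:def_WMS} one must produce the path object axiom, the factorization $A \overset{\ano}{\hookrightarrow} P_X A \twoheadrightarrow A \times_X A$ is the right candidate, and everything comes down to showing that $p\colon P_X A \twoheadrightarrow A$ has the right lifting property against an arbitrary core cofibration $i\colon C \hookrightarrow D$. But at that point you hand the argument back to \cite[Section~2.3]{henry2018weakmodel} and offer only an impressionistic sketch, and the sketch glosses over the two places where the hypotheses are actually consumed. First, the given map $u\colon C \to P_X A$ and the restriction $svi$ of the ``wrong lift'' agree only after composing with $p$; they do not agree over $A \times_X A$, so before any cylinder can be used to interpolate one must show they are homotopic in a suitable relative sense, and this presupposes that the cylinder-defined homotopy relation is an equivalence relation compatible with composition --- itself a nontrivial consequence of the cylinder axiom together with the cancellation property. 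Second, the extension step over $D$ requires identifying which legs of the relative weak cylinder $I_C D$ are acyclic cofibrations (so that they lift against the fibration $P_X A \twoheadrightarrow A \times_X A$, whose fibrant target you did check), and that identification is precisely where right cancellation enters. None of this is carried out, so as a self-contained argument the backward direction has a genuine gap at exactly the hard point; as a proof by citation it is on par with the paper's own treatment, which simply refers the reader to the earlier work.
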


Note that because of \cref{lem:2outof6_for_acyclic} below, this second condition really implies the full $2$-out-of-$6$ condition of acyclic cofibrations amongst core cofibrations.

\begin{remark}The notion of weak equivalences and the homotopy category are core notions: that is, if $\Ccal$ admits two weak model structures with the same core, then they have the same weak equivalences and equivalent homotopy categories, with the equivalence of categories being induced by the identity functor.\end{remark}

We briefly mention some lemmas on the notions of acyclic cofibrations and acyclic fibrations:

\begin{lemma}\label{lem:Naiv_when_fibrant_target}
Any acyclic cofibration with fibrant target is an anodyne cofibration. Dually, any acyclic fibration with cofibrant domain is an anodyne fibration.
\end{lemma}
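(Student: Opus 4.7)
The plan is to prove the first statement by a pullback trick; the second follows by the evident pushout dual.

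Let $f\colon A \overset{\sim}{\hookrightarrow} X$ be an acyclic cofibration with $X$ fibrant. I want to show $f$ has the left lifting property against an arbitrary fibration $p\colon Y \twoheadrightarrow Z$. So suppose given a square
\[
\begin{tikzcd}
A \ar[r,"u"] \ar[d,hook,"f"'] & Y \ar[d,twoheadrightarrow,"p"] \\
X \ar[r,"v"'] & Z.
\end{tikzcd}
\]
The key idea is to replace $p$ by a fibration between fibrant objects without leaving $\Ccal$. Form the pullback $X \times_Z Y$ and let $\pi\colon X \times_Z Y \to X$ denote the projection; by the universal property, the maps $u$ and $f$ induce a map $A \to X \times_Z Y$ making the square factor as a lifting problem against $\pi$ over the identity of $X$.

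The first key step is to check that $X \times_Z Y$ is fibrant. Since $p$ is a fibration, so is its pullback $\pi$; and since $X$ is fibrant, the map $X \to *$ is a fibration. Fibrations are closed under composition (the right class of a weak factorization system always is), so $X \times_Z Y \to *$ is a fibration, i.e. $X \times_Z Y$ is fibrant. Hence $\pi$ is a fibration between fibrant objects. By \cref{def:acyclic_map}, the acyclic cofibration $f$ has the left lifting property against $\pi$, yielding a diagonal filler $X \to X \times_Z Y$. Composing with the projection to $Y$ produces the desired lift $X \to Y$ of the original square. Since $p$ was arbitrary, $f$ is an anodyne cofibration.

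The dual statement is proved by the dual construction: given an acyclic fibration $p\colon Y \overset{\sim}{\twoheadrightarrow} Z$ with $Y$ cofibrant and any lifting problem against a cofibration $i\colon A \hookrightarrow B$, one forms the pushout $B \cup_A Y$. The map $Y \to B \cup_A Y$ is a cofibration (pushout of $i$), and since $Y$ is cofibrant, so is $B \cup_A Y$; this exhibits $Y \to B \cup_A Y$ as a cofibration between cofibrant objects, against which $p$ has the right lifting property, and composing the resulting lift with $B \to B \cup_A Y$ solves the original square.

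There is no real obstacle here; the only subtle point is remembering that one must not fibrantly replace $Y$ and $Z$ (which would lose the lift back to $Y$), and instead use the pullback along $X$ to stay inside the category while producing a fibration with fibrant codomain.
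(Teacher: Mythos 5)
Your proof is correct, but it takes a genuinely different route from the paper's. The paper argues via factorization and the retract lemma: factor $f\colon A\overset{\sim}{\hookrightarrow}X$ as an anodyne cofibration $j'$ followed by a fibration $p$; since $X$ is fibrant the intermediate object is fibrant too, so $p$ is a fibration between fibrant objects, $f$ lifts against $p$ by acyclicity, and the retract lemma exhibits $f$ as a retract of $j'$, hence an anodyne cofibration. You instead verify the left lifting property against an \emph{arbitrary} fibration directly, by pulling that fibration back along the bottom map of the square to manufacture a fibration between fibrant objects (using that the right class of a weak factorization system is stable under pullback and composition, and that $X$ is fibrant), lifting there, and then projecting back. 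Both arguments are sound and of comparable length; the paper's relies on the existence of the (anodyne cofibration, fibration) factorization plus the retract lemma, while yours relies on completeness and the closure properties of fibrations, and has the mild advantage of never leaving the original lifting square. Your dual argument via the pushout is likewise correct and exactly mirrors the dual of the paper's retract argument in spirit.
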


\begin{proof}
If $j \colon A \rightarrow Y$ is an acyclic cofibration and $Y$ is fibrant, if one factors $i$ as $p j'$, with $j'$ an anodyne cofibration followed by a fibration $p$. As $p$ is a fibration with fibrant target, $i$ has the left lifting property against $p$, and hence by the retract lemma $i$ is a retract of $i'$ hence an anodyne cofibration. The same argument applies to the dual case.
\end{proof}

\begin{lemma}\label{lem:2outof6_for_acyclic}
If $k,j,i$ are composable cofibrations such that $kj$ and $ji$ are acyclic then $i$ is acyclic. Dually, if $p,q,r$ are composable fibrations such that $pq$ and $qr$ are acyclic then $p$ is acyclic.
\end{lemma}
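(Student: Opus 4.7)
The plan is to verify directly that $i$ has the left lifting property against any fibration $p : X \twoheadrightarrow Y$ between fibrant objects. Let $f : A \to X$ and $g : B \to Y$ form a lifting problem against $i$, i.e.\ $pf = gi$; the goal is to construct a diagonal filler $h : B \to X$ with $hi = f$ and $ph = g$.

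The first step is to use the acyclicity of $kj$ to extend $g$ along $kj$ to a map defined on $D$. Since $Y$ is fibrant, the terminal map $Y \to 1$ is a fibration between fibrant objects, and the pair $(g : B \to Y,\, D \to 1)$ is a (trivially compatible) lifting problem for $(kj, Y \to 1)$. Acyclicity of $kj$ therefore produces $\tilde{g} : D \to Y$ with $\tilde{g}(kj) = g$.

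The second step exploits the acyclicity of $ji$: observe that $(f,\, \tilde{g}k)$ is a compatible lifting problem for $(ji, p)$, since $(\tilde{g}k)(ji) = \tilde{g}(kji) = gi = pf$. A lift gives $h' : C \to X$ with $h'(ji) = f$ and $ph' = \tilde{g}k$, and setting $h := h'j$ yields $hi = h'(ji) = f$ and $ph = ph'j = \tilde{g}(kj) = g$, as required.

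For the dual statement, a dual argument applies: given a cofibration $i : U \to V$ between cofibrant objects and a lifting problem against $p$, one first uses the acyclicity of $qr$ (applied against the cofibration $0 \to U$ between cofibrant objects) to factor $f : U \to Y$ as $q\hat{f}$ for some $\hat{f} : U \to X$; then the acyclicity of $pq$ solves the resulting lifting problem $(i, pq)$, and post-composing its solution with $q$ furnishes the required lift $V \to Y$. The only non-obvious maneuver in either direction is that the "outer" acyclicity hypothesis is not applied to $p$ itself, but to the trivial fibration $Y \to 1$ (respectively to the cofibration $0 \to U$), purely to extend or lift the given data along the full three-term chain. Once this preliminary extension has been constructed, the "inner" acyclicity hypothesis carries out the actual lift and the rest is bookkeeping.
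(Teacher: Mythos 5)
Your proof is correct and is essentially identical to the paper's argument: the paper likewise first extends the bottom map along $kj$ using its lifting property against the fibration $Y\to 1$ between fibrant objects, then lifts against $p$ using the acyclicity of $ji$, and finally restricts along $j$ to obtain the filler. The dual case is handled the same way in both.
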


A special case of this (corresponding to $k=Id$) is that acyclic cofibrations have the \emph{left cancellation properties amongst cofibrations}, in the sense that if $j,i$ are composable cofibrations such that $ji$ and $j$ are acyclic, then $i$ is acyclic. Dually, \emph{the right cancellation property for acyclic fibrations amongst fibrations} holds.

\begin{proof}

Let $k,j,i$ as above. One needs to show that $i$ has the left lifting property against fibrations between fibrant objects. Starting from the solid diagram:

\[\begin{tikzcd}
A \ar[d,hook,"i"{swap}] \ar[r] & X \ar[d,->>,"p"] \\
B \ar[r] \ar[d,hook,"j"{swap}] & Y \ar[dd,->>] \\
C \ar[d,hook,"k"{swap}] \ar[uur,dotted,"l_2"{description}] &  \\
D \ar[r] \ar[uur,dotted,"l_1"{description}] & 1,
\end{tikzcd} \]

 one constructs $l_1$ using that $ kj$ is acyclic and that $Y$ is fibrant, and then forms the lift $l_2$ using that $ ji$ is acyclic and $p$ is a fibration between fibrant objects. The composite $l_2 j$ produce a diagonal filling for the square hence finishing the proof. The dual proof applies to the other case.
\end{proof}

\begin{remark} Let $i \colon  A \overset{\sim}{\hookrightarrow} B$ be any acyclic cofibration. Let $j\colon  B \overset{\ano}{\hookrightarrow} B^{fib}$ be a fibrant replacement. The composite $ji$ is an acyclic cofibration (by composition) with fibrant target hence it is an anodyne cofibration by \cref{lem:Naiv_when_fibrant_target} as its target is fibrant. It follows that any acyclic cofibration $i$ fits into a composition $j i$ where $j$ and $ji$ are both anodyne cofibrations. It immediately follows that:
\end{remark}

\begin{prop}\label{prop:charac_acyclic_2outof_3}
Acyclic cofibrations are the closure under the left cancellation property amongst cofibrations of anodyne cofibrations. In particular they coincide if and only if anodyne cofibrations have the left cancellation property amongst cofibrations. The dual statement holds for acyclic fibrations.
\end{prop}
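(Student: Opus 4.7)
The plan is to prove both inclusions directly, where one direction is essentially by definition and the other uses the factorization trick flagged in the remark immediately preceding the statement.

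For the easy inclusion, note that \cref{lem:2outof6_for_acyclic} (in its left cancellation form) says the class of acyclic cofibrations is closed under left cancellation among cofibrations, and it trivially contains all anodyne cofibrations. Hence the closure of anodyne cofibrations under left cancellation among cofibrations is contained in the class of acyclic cofibrations.

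For the reverse inclusion, let $i\colon A \overset{\sim}{\hookrightarrow} B$ be an acyclic cofibration, and let $j\colon B \overset{\ano}{\hookrightarrow} B^{fib}$ be an anodyne fibrant replacement of $B$. By the remark preceding the statement, both $j$ and $j \circ i$ are anodyne cofibrations, hence belong to the closure; by the left cancellation property defining the closure, $i$ itself is in the closure. This settles the characterization.

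The ``in particular'' clause is then immediate: if anodyne cofibrations already satisfy the left cancellation property among cofibrations, the class of anodyne cofibrations equals its own closure under that property, which by the first part coincides with the class of acyclic cofibrations; conversely, if anodyne and acyclic cofibrations coincide, then anodyne cofibrations inherit the left cancellation property from \cref{lem:2outof6_for_acyclic}. The dual statement for fibrations is obtained by replacing $B \to B^{fib}$ with a cofibrant replacement of the domain of an acyclic fibration and applying the dual of \cref{lem:Naiv_when_fibrant_target} and \cref{lem:2outof6_for_acyclic}. I do not foresee a genuine obstacle: the only nontrivial input is the existence of the anodyne replacement $j$ so that $ji$ is anodyne, which is exactly what the preceding remark provides.
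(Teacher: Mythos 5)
Your proof is correct and follows essentially the same route as the paper: the easy inclusion via the left cancellation property from \cref{lem:2outof6_for_acyclic}, and the reverse inclusion via the anodyne fibrant replacement $j$ and the fact that $ji$ is anodyne by \cref{lem:Naiv_when_fibrant_target}, exactly as in the remark preceding the proposition. The paper simply compresses this into ``it immediately follows,'' whereas you have spelled out the details.
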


The exact same argument also gives that acyclic cofibrations with cofibrant domain are the closure of anodyne cofibrations with cofibrant domain under the left cancellation properties amongst cofibrations with cofibrant domain.

We conclude this preliminary section with the notion of morphism of premodel categories:

\begin{definition}
Given premodel categories $\Ccal$ and $\Dcal$, an adjunction:

\[ L \colon  \Ccal \leftrightarrows \Dcal \colon  R \]

is said to be a Quillen adjunction (or that $L$ is a left Quillen functor or that $R$ is a right Quillen functor) if $L$ preserves cofibrations and $R$ preserves fibrations

\end{definition}

We remind the reader that given an adjunction $L \dashv R$ as above and two weak factorization systems on $\Ccal$ and $\Dcal$ then $L$ preserves the left class if and only $R$ preserves the right class. So the condition above can be equivalently rephrased as the fact that $L$ preserves cofibrations and anodyne cofibration and that $R$ preserves fibrations and anodyne fibrations.

One has the following easy lemma:

\begin{lemma}\label{lem:Qadj_pres_acyclic} Given a Quillen adjunction:

\[L\colon  \Ccal \leftrightarrows \Dcal \colon  R\]

between premodel categories, then $L$ also preserves core cofibrations and acyclic cofibrations and $R$ preserves core fibrations and acyclic fibrations.

\end{lemma}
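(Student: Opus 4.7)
The plan is to reduce everything to the defining lifting properties using the adjunction, together with the fact that $L$ (being a left adjoint) preserves the initial object and $R$ (being a right adjoint) preserves the terminal object.

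First I would handle the preservation of core cofibrations and core fibrations. If $A$ is cofibrant, the map $0 \to A$ is a cofibration in $\Ccal$, so $L(0) \to L(A)$ is a cofibration in $\Dcal$; since $L(0)$ is the initial object of $\Dcal$, this says $L(A)$ is cofibrant. Combined with the fact that $L$ preserves cofibrations, this gives that $L$ sends core cofibrations to core cofibrations. The dual argument using that $R$ preserves terminal objects shows that $R$ sends fibrant objects to fibrant objects, and combined with the preservation of fibrations, this gives that $R$ sends core fibrations to core fibrations.

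Next I would handle acyclic cofibrations (the acyclic fibration case being dual). Let $i \colon A \overset{\sim}{\hookrightarrow} B$ be an acyclic cofibration in $\Ccal$. Then $L(i)$ is a cofibration by the Quillen hypothesis, so it remains to verify the left lifting property against any fibration $p \colon X \twoheadrightarrow Y$ between fibrant objects of $\Dcal$. Given a lifting problem from $L(i)$ to $p$, transpose it across the adjunction to a lifting problem from $i$ against $R(p)$. By the first paragraph, $R$ sends $p$ (a core fibration) to a core fibration of $\Ccal$, and in fact $R(X), R(Y)$ are both fibrant, so $R(p)$ is a fibration between fibrant objects in $\Ccal$. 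Hence $i$ lifts against $R(p)$, and transposing back yields the desired diagonal filler for $L(i)$ against $p$. Thus $L(i)$ is an acyclic cofibration.

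Nothing here presents a real obstacle; the only point to be careful about is to use the correct direction of the Quillen hypothesis (that $R$ preserves fibrations) when checking that $R(p)$ is again a fibration between fibrant objects, so that the assumed lifting property of $i$ can be invoked.
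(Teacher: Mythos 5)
Your proof is correct and follows essentially the same route as the paper's: both reduce preservation of core (co)fibrations to preservation of the initial/terminal object, and then obtain preservation of acyclic cofibrations from the standard adjunction transposition of lifting problems against core fibrations. The paper merely states the transposition step more tersely ("$L$ preserves the class of maps with the lifting property against core fibrations"), whereas you spell it out; there is no substantive difference.
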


\begin{proof}
As $L$ preserves cofibrations it also preserves cofibrant objects, hence it preserves cofibrations with cofibrant domain, i.e. core cofibrations. As $R$ dually preserves core fibrations, $L$ preserves the class of maps with the lifting property against core fibrations, hence $L$ preserves acyclic cofibrations. Similarly for $R$.
\end{proof}

\begin{remark} A more general notion of Quillen adjunction (called weak Quillen adjunction) was considered in \cite{henry2018weakmodel}. It should be noted that an adjunction $(L,R)$ where $L$ only preserves core cofibrations and $R$ only preserves core fibrations gives an example of a weak Quillen adjunction. This is enough to obtain an adjunction at the level of the homotopy categories and to show that $L$ preserves core acyclic cofibrations and $R$ preserves core acyclic fibrations. \end{remark}

\section{Recognition principle for semi-model categories}
\label{sec:WMS_with_cylinder}

The notion of left semi-model category has been discovered first by M.~Hovey in \cite{hovey1998monoidal}, who observed that if $A$ is a cofibrant symmetric monoid in a combinatorial\footnote{Hovey works more generally with a notion of ``cofibrantly generated'' that involves smallness conditions.} symmetric monoidal model category, then, even if the monoidal model category $\Ccal$ do not satisfies the ``monoid axiom'', then the category of $A$-algebra still ``almost'' have a model structure (Theorem 3.3 of \cite{hovey1998monoidal}).

The notion has been first explicitely formalized by M.~Spitzweck in his PhD thesis \cite{spitzweck2001operads} under the name $J$-semi-model category. The dual notion of right semi-model category have been first studied by C.~Barwick in \cite{barwick2010left} under the name right (and left) model category.
In Section 12 of \cite{fresse2009modules}, B.~Fresse also introduced a slightly weaker version of the notion that he also calls a left semi-model category.\footnote{This weakening was also very briefly mentioned in Spitzweck's work under the name $(I,J)$-semi-model category.}

In this section, we will talk about ``Spitzweck left/right semi-model categories'' and ``Fresse left/right semi-model category'' to distinguish between the two notions. In the rest of the paper, we will refer to Spitzweck left/right semi-model categories simply as left/right semi-model categories. Precisely:


\begin{definition}\label{left_semi_model} A \emph{Fresse left semi-model category} is a category $\Ccal$ with limits and colimits and three classes of maps called weak equivalences, cofibrations and fibrations such that:

\begin{enumerate}
\item Weak equivalences satisfies $2$-out-of-$3$. All three classes are stable under retract.

\item Any map with cofibrant domain can be factored as a cofibration followed by a trivial\footnote{Here we use ``trivial (co)fibration'' in its usual sense: to mean a (co)fibration which is an equivalence.} fibration, and as a trivial cofibration followed by a fibration.

\item Given a \emph{core} cofibration $i$ and a fibration $p$, one of them being an equivalence, then $i$ has the left lifting property against $p$.

\item Fibrations are stable under pullback.

\item The initial object is cofibrant.

\end{enumerate}

A \emph{Spitzweck left semi-model category} is a Fresse left semi-model category where cofibrations and trivial fibrations form an actual weak factorization system without restrictin to arrows with cofibrant domain.
\end{definition}

We define dually \emph{Fresse right semi-model category} and \emph{Spitzweck right semi-model category}, which are the categorical dual of the above. There the existence of factorizations are restricted to arrows with a fibrant target and lifting properties are restricted to cofibrations and core fibrations.

When not specified, \emph{left/right semi-model categories} will mean \emph{Spitzweck} left/right semi-model category.

\begin{remark} It is important to note that the definition of Fresse left semi-model categories only involves core cofibrations (and fibrations and equivalences). At no point the definition involve or say anything about non-core cofibration. For example if $\Ccal$ is a Fresse left semi-model category then $\Ccal$ endowed with the same fibrations and weak equivalence but only core cofibrations as its class of cofibrations is also a Fresse left semi-model category, and conversely, if $\Ccal$ with only core cofibrations is a Fresse left semi-model category, then $\Ccal$ itself already is.

This does not apply to Spitzweck's definition of left semi-model category which requires (cofibration, trivial fibration) to be a weak factorization system. Though it should be noted that, by the usual retract lemma, a map with cofibrant domain is a cofibration if and only if it has the right lifting property against trivial fibrations. So, if $\Ccal$ is a Fresse left semi-model category, and the trivial fibrations of $\Ccal$ are the right class of a weak factorization system, then if one redefine the class of cofibration as being the left class of this weak factorization, this does not change the core cofibration of $\Ccal$ and makes $\Ccal$ into a Spitzweck left semi-model category. The dual discussion apply to right semi-model categories. \end{remark}

\begin{remark} We will show in the next section that any ``combinatorial'' Fresse left/right semi-model structure is in fact a Spitzweck left semi-model structure in the sense of the previous remark. More precisely, the right saturation construction of \cref{sec:changing_combinatorial} turn any Fresse left semi-model category into a Spitzweck ones (and dually). This is why, in the rest of the paper we will mostly ignore the distinction between Fresse and Spitzweck's definition of left semi-model structure and use the terminology ``left/right semi-model structure'' to mean a Spitzweck left/right semi-model structure.
This being said, outside of the combinatorial and accessible cases, when saturation is no longer available, it seems that Fresse's definition is better behaved as there is no reasons in general for the trivial fibrations to be part of the weak factorization system. In fact, in \cite{henry2020model}, the model structures we will construct on the bicategory of combinatorial premodel categories are Fresse right semi-model structure that, as far as we know, are not Spitzweck right semi-model structure.
\end{remark}

Before moving on to the main theorem of this section, which gives a characterization of left and right semi-model categories amongst factorization weak model categories, we want to show how the usual concepts of model category theory relate to the ones we have introduced in the previous section for premodel categories and weak model categories.

\begin{lemma}\label{lem:acyclic_fib_equiv} Let $\Ccal$ be a Fresse left semi-model category:

\begin{itemize}

\item An arrow that has the right lifting property against all core cofibrations is an equivalence.

\item An arrow between cofibrant objects that has the left lifting property against all core fibrations is an equivalence.

\end{itemize}

The dual properties holds in a right semi-model category.

\end{lemma}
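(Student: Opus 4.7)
The overall plan, in both parts, is to produce a composable triple of maps in $\Ccal^{c\vee f}$ to which one can apply the $2$-out-of-$6$ property for weak equivalences. The main preliminary point (and the main obstacle) is that any Fresse left semi-model category is a weak model category in the sense of Remark~\ref{rk:def_WMS}: the cylinder and path object axioms follow by factoring the codiagonal $B\coprod_A B\to B$ and the diagonal $A\to A\times_X A$ via axiom~(2), so the theorem recalled above supplies us with the $2$-out-of-$6$ property for weak equivalences in $\Ccal^{c\vee f}$. Securing this is the main subtlety; the rest is then a direct lifting argument.

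For the first statement, given $p\colon X\to Y$ with the right lifting property against all core cofibrations, I would build cofibrant replacements $u\colon X^c\overset{\sim}{\twoheadrightarrow} X$ and $v\colon Y^c\overset{\sim}{\twoheadrightarrow} Y$ (by factoring $\emptyset\to X$ and $\emptyset\to Y$ using axioms~(2) and~(5)), and lift $pu$ through the trivial fibration $v$ to obtain $p'\colon X^c\to Y^c$ with $vp'=pu$; by $2$-out-of-$3$ it suffices to show $p'$ is a weak equivalence. Next I would factor $p'$ as a core cofibration $k\colon X^c\hookrightarrow Z$ followed by a trivial fibration $q\colon Z\overset{\sim}{\twoheadrightarrow} Y^c$; applying the hypothesis on $p$ to the commutative square $pu=vqk$ produces $m\colon Z\to X$ with $mk=u$ and $pm=vq$, and then lifting $m$ along the trivial fibration $u$ yields $m^c\colon Z\to X^c$ with $um^c=m$. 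The identities $u(m^c k)=u$ and $v(p'm^c)=vq$, combined with $2$-out-of-$3$, show that $m^c k$ and $p'm^c$ are weak equivalences, so $2$-out-of-$6$ applied to the composable core triple $k,m^c,p'$ forces $p'$, and hence $p$, to be a weak equivalence.

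For the second statement, given $p\colon X\to Y$ between cofibrant objects with the left lifting property against all core fibrations, I would take a fibrant replacement $j\colon Y\overset{\sim}{\hookrightarrow} Y^f$ by factoring $Y\to 1$ as a trivial cofibration followed by a fibration, and factor $jp$ as $X\overset{k'}{\hookrightarrow} Z'\overset{q'}{\twoheadrightarrow} Y^f$ with $k'$ a trivial cofibration and $q'$ a fibration. Since $Y^f$ is fibrant, $q'$ is a core fibration, so the LLP hypothesis on $p$ applied to the square $q'k'=jp$ yields $r\colon Y\to Z'$ with $rp=k'$ and $q'r=j$, both trivial cofibrations and hence weak equivalences. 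All objects involved are cofibrant (and $Y^f$ is also fibrant), so the composable triple $p,r,q'$ lies in $\Ccal^{c\vee f}$, and $2$-out-of-$6$ concludes that $p$ is a weak equivalence. The dual statements in right semi-model categories follow by formal dualization.
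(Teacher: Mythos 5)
Your overall architecture is reasonable, and your two reductions (replacing $p$ by a lift $p'$ between cofibrant objects in part one, and inserting a fibrant replacement of $Y$ before factoring in part two) are unobjectionable; the second is even a genuinely needed refinement of the paper's terse ``exact dual''. The gap is in the tool you use to close both arguments: the $2$-out-of-$6$ property for the equivalences of the Fresse left semi-model structure. The only stability properties that Definition~\ref{left_semi_model} grants this class are $2$-out-of-$3$ and closure under retracts; $2$-out-of-$6$ is strictly stronger and is not an axiom. Your route to it --- verify the cylinder and path object axioms and invoke the recalled theorem on weak model categories --- is circular in this paper's development: that theorem supplies $2$-out-of-$6$ only for the class $\Wcal$ of maps in $\Ccal^{c\vee f}$ inverted in $Ho(\Ccal)$, and identifying $\Wcal$ with the semi-model structure's own equivalences is \cref{prop:saturation_of_semi_structure}.\ref{prop:saturation_of_semi_structure:equivalences_match}, whose proof rests on points \ref{prop:saturation_of_semi_structure:acyclic_match}--\ref{prop:saturation_of_semi_structure:core_left_sat} of that proposition, which are themselves proved using the present lemma. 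So the steps ``$2$-out-of-$6$ applied to $k,m^c,p'$'' and ``$2$-out-of-$6$ concludes that $p$ is a weak equivalence'' are not justified from the axioms at this stage.

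Both parts close using only $2$-out-of-$3$ and retract-closure, provided you apply the retract lemma to a map that itself has the relevant lifting property. In part one, your $p'$ need not inherit the right lifting property against core cofibrations from $p$ (only the composite $vp'=pu$ does), and this is exactly what forces you into the homotopy-inverse/$2$-out-of-$6$ detour; the paper instead replaces $p$ by a composite $f''$ of a pullback of $p$ with a trivial fibration, which does inherit the lifting property, then factors $f''$ as a core cofibration followed by a trivial fibration and concludes by the retract lemma together with retract-closure of equivalences. In part two, apply the retract lemma to $jp$ rather than to $p$: since $j$ is a core trivial cofibration it has the left lifting property against all fibrations, so $jp$ has the left lifting property against the core fibration $q'$; hence $jp$ is a retract of the trivial cofibration $k'$, so it is an equivalence, and $p$ is an equivalence by $2$-out-of-$3$. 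Your lift $r$ of $p$ (rather than of $jp$) against $q'$ does not assemble into a retract diagram, which is precisely why you needed $2$-out-of-$6$ there.
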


\begin{proof} We first observe that to prove the first point, it is enough to show it for an arrow between cofibrant objects. Indeed let $f\colon  X \rightarrow Y$ an arrow with the right lifting property against core cofibrations. Take cofibrant replacements as follows:

\[\begin{tikzcd}
X^{cof} \ar[r,->>,"\sim"] \ar[dr,"f''"{swap} ] &X' \ar[d,"f'"{swap}] \ar[r,->>,"\sim"] \ar[dr,phantom,"\lrcorner"{very near start}] & X \ar[d,"f"] \\
& Y^{cof} \ar[r,->>,"\sim"] & Y 
\end{tikzcd}\]

The arrow $f'$ has the same lifting property as $f$ because it is a pullback, and hence $f''$ also has the lifting property against core cofibrations because it is a composite of $f'$ with a trivial fibration. By $2$-out-of-$3$ for equivalences, $f$ is an equivalence if and only if $f'$ is, if and only if $f''$ is, hence it is indeed enough to show the result for the arrow $f''$, between cofibrant objects.

Hence one can freely assume that $f\colon X \rightarrow Y$ is an arrow between cofibrant objects. We can hence factor it as a cofibration followed by a trivial fibration. The cofibration has a cofibrant domain, so it has the left lifting property against $f$, it hence follows by the retract lemma that $f$ is a retract of the trivial fibration part of this factorization, and hence is itself an equivalence.

The proof of the second point is the exact dual, except that for the first part involving replacement, and for the construction of the factorization in the second part one needs to assume that the object involved are cofibrant.
\end{proof}

\begin{prop}\label{prop:saturation_of_semi_structure} Let $\Ccal$ be a premodel category which is also a Fresse left semi-model category, with the same fibrations and cofibrations, then:

\begin{enumerate}

\item\label{prop:saturation_of_semi_structure:acyclic_match} A fibration or a core cofibration is acyclic in the sense of the premodel structure on $\Ccal$ if and only if it is trivial.

\item\label{prop:saturation_of_semi_structure:wms} $\Ccal$ is a weak model category.

\item\label{prop:saturation_of_semi_structure:core_left_sat} All core trivial/acyclic cofibrations of $\Ccal$ are anodyne cofibrations.

\item\label{prop:saturation_of_semi_structure:equivalences_match} An arrow between fibrant or cofibrant objects is an equivalence in the sense of the weak model structure if and only if it is an equivalence in the sense of the left semi-model structure.

\item\label{prop:saturation_of_semi_structure:spitzweck_right_sat} If $\Ccal$ is furthermore a Spitzweck left semi-model category, then all trivial/acyclic fibrations are anodyne fibrations.

\end{enumerate}

All the dual properties hold for a right semi-model category.

\end{prop}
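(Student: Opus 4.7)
The plan is to establish (1) first, as it is the technical core of the proposition; items (2)--(5) then follow from short arguments built on top of it. The main obstacle I expect is the ``acyclic $\Rightarrow$ trivial'' direction of (1) for core cofibrations, as one must convert a purely lifting-theoretic condition (the left lifting property against fibrations between fibrant objects) into the homotopical statement that the arrow is a semi-model equivalence.

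For (1), the easy implications are immediate: if $p$ is a fibration which is trivial in the semi-model sense, Fresse's axiom (3) grants lifting against every core cofibration, hence $p$ is acyclic in the premodel sense; conversely, an acyclic fibration has right lifting against all core cofibrations, so \cref{lem:acyclic_fib_equiv} makes it an equivalence. The same axiom (3) shows any trivial core cofibration is acyclic. For the converse on core cofibrations, let $i\colon A \hookrightarrow B$ be a core acyclic cofibration. I would take an anodyne cofibration $j\colon B \overset{\ano}{\hookrightarrow} B^{fib}$ to a fibrant replacement; then $ji$ is a cofibration with left lifting property against fibrations between fibrant objects, and its target is fibrant, so by \cref{lem:Naiv_when_fibrant_target} it is anodyne. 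Both $j$ and $ji$ are then anodyne cofibrations with cofibrant source (noting $B$ is cofibrant as the target of a cofibration from the cofibrant object $A$). The crucial observation is that any such anodyne cofibration is a semi-model equivalence: factor it in the semi-model structure as a trivial cofibration followed by a fibration; the anodyne cofibration lifts against the fibration part, so the retract lemma exhibits it as a retract of a trivial cofibration, hence as an equivalence. Applying $2$-out-of-$3$ to $ji = j \circ i$ yields that $i$ is an equivalence, i.e., a trivial cofibration.

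Given (1), item (3) follows from a single retract argument: factor a core trivial cofibration $i$ in the premodel structure as $i = p s$ with $s$ anodyne and $p$ a fibration. By (1) $s$ is a semi-model equivalence, so by $2$-out-of-$3$ so is $p$, making $p$ a trivial fibration; Fresse's axiom (3) then provides a lift of $i$ against $p$, and the retract lemma makes $i$ a retract of the anodyne cofibration $s$. For (2), I would apply \cref{rk:equivalence_implies_WMS} to the class $W$ of semi-model equivalences restricted to $\Ccal^{c \vee f}$: this class satisfies $2$-out-of-$3$, retract stability, and contains isomorphisms by Fresse axiom (1), and by (1) it characterises acyclic core cofibrations and acyclic core fibrations, so the remark produces the required relative strong cylinder and path objects, making $\Ccal$ a weak model category.

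For (4), the uniqueness clause of \cref{rk:equivalence_implies_WMS} applied to the weak model structure furnished by (2) identifies the weak model equivalences on $\Ccal^{c \vee f}$ with any class satisfying the properties above; since the semi-model equivalences restricted to $\Ccal^{c \vee f}$ have these properties, the two classes coincide. Finally for (5), assume $\Ccal$ is Spitzweck and let $p$ be a trivial fibration. Factor $p = q r$ in the premodel structure with $r$ a cofibration and $q$ an anodyne fibration; by (1) and $2$-out-of-$3$ for equivalences, $r$ is a trivial cofibration, so the Spitzweck lifting axiom gives a lift of $r$ against $p$, making $p$ a retract of the anodyne fibration $q$. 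The dual claims for right semi-model categories all follow by symmetric arguments.
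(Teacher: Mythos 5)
Your proof is correct, and its overall shape (establish (\ref{prop:saturation_of_semi_structure:acyclic_match}) first, then derive the remaining items) matches the paper's. The one place where you take a genuinely different route is the direction ``acyclic core cofibration $\Rightarrow$ trivial'': the paper simply invokes the second bullet of \cref{lem:acyclic_fib_equiv}, whereas you pass to a fibrant replacement, use \cref{lem:Naiv_when_fibrant_target} to reduce to anodyne cofibrations with cofibrant domain, prove directly (by factoring in the semi-model structure and applying the retract lemma) that such anodyne cofibrations are semi-model equivalences, and conclude by $2$-out-of-$3$. This is slightly longer but more self-contained, and it sidesteps the small mismatch one has to check when citing \cref{lem:acyclic_fib_equiv} (acyclicity is lifting against fibrations \emph{between fibrant objects}, while the lemma is stated for lifting against all core fibrations); your detour only ever needs lifting against fibrations with fibrant source and target. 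The remaining items are the same arguments as the paper's in different packaging: for (\ref{prop:saturation_of_semi_structure:wms}) and (\ref{prop:saturation_of_semi_structure:equivalences_match}) you route through the existence and uniqueness clauses of \cref{rk:equivalence_implies_WMS} rather than redoing the cylinder/path-object factorizations by hand (legitimate, since only $2$-out-of-$3$ and retract stability are used there), and for (\ref{prop:saturation_of_semi_structure:core_left_sat}) and (\ref{prop:saturation_of_semi_structure:spitzweck_right_sat}) you spell out the retract arguments that the paper compresses into ``a map with the lifting property against the right class of a weak factorization system lies in the left class'' and ``two weak factorization systems with the same left class have the same right class''. All steps check out, including the point that in (\ref{prop:saturation_of_semi_structure:spitzweck_right_sat}) the cofibration $r$ need not be core, which is exactly why the Spitzweck (rather than Fresse) lifting property is required there.
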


\begin{proof} We start with \ref{prop:saturation_of_semi_structure:acyclic_match} for fibrations. Any acyclic fibration is an equivalence by \cref{lem:acyclic_fib_equiv}, hence is also a trivial fibration. Conversely if $f$ is a trivial fibration, i.e. both a fibration and an equivalence, then by definition \cref{left_semi_model} it has the right lifting property against all core cofibration hence is an acyclic fibration.

We move to \ref{prop:saturation_of_semi_structure:acyclic_match} for cofibrations, which we prove together with \ref{prop:saturation_of_semi_structure:core_left_sat}. If $f$ is a core acyclic cofibration, then it is a core cofibration and an equivalence by \cref{lem:acyclic_fib_equiv} and hence it is a trivial cofibration. Conversely, a trivial core cofibration has the left lifting property against all fibrations by \cref{left_semi_model} and hence is both an anodyne cofibration and an acyclic cofibration.

Point \ref{prop:saturation_of_semi_structure:wms} follows immediately as one can construct cylinder and path objects using the factorization system and $2$-out-of-$3$ for equivalences. For example, if $A \hookrightarrow B$ is a core cofibration, then $B \coprod_A B \to B$ can be factored as a cofibration followed by a trivial fibration $B \coprod_A B \hookrightarrow I_A B \overset{\sim}{\twoheadrightarrow} B$, and by $2$-out-of-$3$ for equivalence in the diagram below, the core cofibration $B \hookrightarrow I_A B$ is acyclic. The dual argument produces path objects.
\[
\begin{tikzcd}
 & I_A B \ar[dr,two heads,"\sim"] & \\
  B \ar[ur,hook] \ar[rr,equal] & & B
\end{tikzcd}
\]

For point \ref{prop:saturation_of_semi_structure:equivalences_match}: consider first an arrow $f$ from a cofibrant object to a fibrant object. We factor $f$ as a cofibration followed by a trivial fibration. Then both in the weak model structure and in the left semi-model structure, $f$ is an equivalence if and only if the cofibration part is trivial/acyclic, so the two notions are equivalent. For a general map $f$ between objects that are either fibrant or cofibrant, if the domain is not cofibrant, then it is fibrant so one can pre-compose $f$ with a cofibrant replacement of its domain. Similarly if the target of $f$ is not fibrant, one post-composes $f$ with a fibrant replacement of the target. Both replacement maps are equivalences in both senses, so $f$ is an equivalence in either the weak or the left semi-model structure if and only if the composition with these replacements is an equivalence, but for this composite the result follows from the case previously treated.

For point \ref{prop:saturation_of_semi_structure:spitzweck_right_sat}, if $\Ccal$ is a Spitzweck left semi-model structure, then (cofibration,acyclic fibration) should be a weak factorization system, but as (cofibration,anodyne fibrations) is already assumed to be a weak factorization system in a premodel category the two right classes should coincide.
\end{proof}

We introduce the following terminology:

\begin{definition}\label{def:saturation}
 A premodel category $\Ccal$ is said to be:
\begin{itemize}

\item \emph{Right saturated} if all acyclic fibrations are anodyne fibrations.

\item \emph{Core right saturated} if all core acyclic fibrations are anodyne fibrations.

\item \emph{Left saturated} if all all acyclic cofibrations are anodyne cofibrations.

\item \emph{Core left saturated} if all all core acyclic cofibrations are anodyne cofibrations.

\item \emph{Bi-saturated} (or simply \emph{saturated}) if it is both left and right saturated.



\end{itemize}

\end{definition}

We will show in \cref{sec:changing_combinatorial} that any combinatorial or accessible premodel category can be modified to satisfy any of these properties without changing its core.

We can now state the main theorem of this section:

\begin{theorem}\label{Th:recog_LMS} Let $\Ccal$ be a premodel category. Then there is a class of equivalences making $\Ccal$ into a Fresse left semi-model structure if and only if the following conditions holds:

\begin{enumerate}

\item $\Ccal$ is a weak model category.

\item Every cofibrant object of $\Ccal$ admit a strong cylinder object, i.e. a factorization:

\[ X \coprod X \hookrightarrow IX \rightarrow X \]

where the composition $X \hookrightarrow IX$ of the cofibration with the first coproduct inclusion is an acyclic cofibration.

\item  $\Ccal$ is core left saturated.

\end{enumerate}

when these conditions hold, the class of equivalences is unique.
Moreover, $\Ccal$ is a Spitzweck left semi-model category if and only it is also right saturated.
\end{theorem}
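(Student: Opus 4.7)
For the forward direction, conditions (1) and (3) follow immediately from parts (\ref{prop:saturation_of_semi_structure:wms}) and (\ref{prop:saturation_of_semi_structure:core_left_sat}) of the already-proved \cref{prop:saturation_of_semi_structure}. For condition (2), given a cofibrant $X$ I would factor the codiagonal $X\coprod X\to X$ using the premodel category's (cofibration, anodyne fibration) factorization to obtain $X\coprod X\hookrightarrow IX\overset{\ano}{\twoheadrightarrow} X$. The anodyne fibration $IX\to X$ is a core acyclic fibration, hence an equivalence in the Fresse left semi-model sense by \cref{lem:acyclic_fib_equiv} and \cref{prop:saturation_of_semi_structure}(\ref{prop:saturation_of_semi_structure:equivalences_match}); two-out-of-three then forces the composite cofibration $X\hookrightarrow IX$ to be an equivalence, i.e.\ a core trivial cofibration, which by part (\ref{prop:saturation_of_semi_structure:acyclic_match}) of the same proposition is a core acyclic cofibration, yielding the strong cylinder.

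For the backward direction, I would take the equivalences to be the weak model structure's class, extended to arbitrary arrows via bifibrant replacement: given $f\colon X\to Y$, form a cofibrant replacement $\tilde X\overset{\ano}{\twoheadrightarrow} X$ and a fibrant replacement of $\tilde X\to Y$, producing a map $\tilde f$ between bifibrant objects, and declare $f$ an equivalence iff $\tilde f$ is a weak equivalence in the weak model sense. The Fresse left semi-model axioms are then verified as follows: factorizations for maps with cofibrant domain come directly from the premodel category's two weak factorization systems, using that anodyne (co)fibrations are trivial (co)fibrations (combining \cref{lem:acyclic_fib_equiv} with the identification of equivalences on core arrows); the lifting axiom splits into two subcases, namely a core cofibration that is an equivalence is a core acyclic cofibration (since its target is automatically cofibrant) hence anodyne by condition (3) and therefore has LLP against all fibrations, while a trivial fibration is shown to be acyclic via a factorization-plus-retract argument using the premodel category's weak factorization systems together with condition (3); axioms (4) and (5) are standard premodel category facts.

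Uniqueness follows from the argument of \cref{rk:equivalence_implies_WMS} combined with \cref{prop:saturation_of_semi_structure}: any class of equivalences producing a Fresse left semi-model structure must agree with the weak model structure's equivalences on core arrows, and two-out-of-three together with the canonical replacement maps (themselves always equivalences) determines the class on all arrows. For the Spitzweck addendum, the extra requirement is that (cofibration, trivial fibration) form a weak factorization system; since (cofibration, anodyne fibration) is already such a system in the premodel category, this is equivalent to trivial fibrations $=$ anodyne fibrations, which by \cref{prop:saturation_of_semi_structure}(\ref{prop:saturation_of_semi_structure:acyclic_match}) amounts to the equality of acyclic and anodyne fibrations, i.e., right saturation. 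The main obstacle will be in the backward direction, namely verifying that the class of equivalences defined via bifibrant replacement is well-defined (independent of the choice of replacements) and satisfies two-out-of-three on arbitrary arrows: the native homotopy theory of a weak model category only controls bifibrant objects, and it is precisely condition (2) (strong cylinders for all cofibrant objects, strictly stronger than the built-in cylinder axiom) that provides enough extra structure to relate different replacement choices and make the extension coherent.
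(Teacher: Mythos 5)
Your forward direction, uniqueness sketch and Spitzweck addendum all follow the paper's route and are fine. The problem is in the backward direction, and it is not merely the acknowledged difficulty of checking well-definedness: the class you propose to define is the \emph{wrong} class. You extend the weak equivalences to arbitrary arrows by \emph{bifibrant} replacement, i.e.\ by also fibrantly replacing the (possibly non-cofibrant) target $Y$ along an anodyne cofibration $Y \overset{\ano}{\hookrightarrow} Y^{fib}$. But in a Fresse \emph{left} semi-model structure an anodyne cofibration with non-cofibrant domain need not be an equivalence (\cref{lem:acyclic_fib_equiv} only covers cofibrant domains), and \cref{ex:non_Quillen_Loc} shows this failure is real: in $\Lb_{a\to c}\Ccal$ the map $a\to b$ is an anodyne fibration, hence must be an equivalence in any Fresse left semi-model structure, yet its bifibrant replacement is $c\to d$, which is not a weak equivalence. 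So your recipe declares $a\to b$ a non-equivalence and the resulting class violates the factorization axiom for the cofibrant object $a$. The asymmetry is essential: for a \emph{left} semi-model structure the extension must be performed by cofibrant replacement only (cf.\ the ``left localization functor'' of \cref{rk:Prop_of_two_sided}).

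This is exactly what the paper does. It proves (\cref{prop:localization_Strong_cylinder}) that $Ho(\Ccal)\simeq \Ccal^{cof}[(\text{core acyclic cof.})^{-1}]\to\Ccal[T^{-1}]$ is an equivalence for $T$ the class of all acyclic fibrations together with all core acyclic cofibrations, using a cofibrant-replacement localization lemma; the condition that two lifts of a map $C'\to A$ along an acyclic fibration $C\to A$ agree in the homotopy category is verified with the strong cylinder $IC'$, which is precisely where condition (2) enters (your intuition about where the cylinder is needed is correct, but it is needed for this cofibrant-replacement calculus, not to ``relate different bifibrant replacement choices''). Equivalences are then \emph{defined} as the maps inverted in $\Ccal[T^{-1}]$, so $2$-out-of-$6$ and retract stability come for free, and the only remaining substantive point is \cref{prop:tractable_acyc_fib} (a fibration which is an equivalence is acyclic), which is the factorization-plus-retract argument you describe, using core left saturation. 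If you replace your bifibrant-replacement definition by this one, the rest of your outline goes through.
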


As we have stated the theorem for general premodel categories, one automatically obtains a dual result, which given its importance we will state explicitly:

\begin{theorem}\label{Th:recog_RMS}Let $\Ccal$ be a premodel category.  Then there is a class of equivalences making $\Ccal$ into a Fresse right semi-model structure if and only if the following conditions holds:

\begin{enumerate}

\item $\Ccal$ is a weak model category.

\item Every fibrant object of $\Ccal$ admit a strong path object, i.e. a factorization:

\[ X \rightarrow PX \twoheadrightarrow X \times X \]

where the composition $PX \twoheadrightarrow X$ of the fibration with the first projection is an acyclic fibration.

\item  $\Ccal$ is core right saturated.

\end{enumerate}

When these conditions hold, the class of equivalences is unique. Moreover, $\Ccal$ is a Spitweck right semi-model category if and only if it is also left saturated. \end{theorem}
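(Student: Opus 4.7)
My plan is to prove this as the dual of Th:recog_LMS, swapping left/right throughout; I sketch the argument directly. For the "only if" direction, suppose $\Ccal$ (as a premodel category) carries a Fresse right semi-model structure whose cofibrations and fibrations are those of the premodel structure. Applying the dual of prop:saturation_of_semi_structure yields immediately that $\Ccal$ is a weak model category (condition 1), that a cofibration or core fibration is acyclic iff it is trivial, and that every core acyclic fibration is anodyne, giving condition 3. For condition 2, given any fibrant $X$, the product $X \times X$ is fibrant (as $X \times X \twoheadrightarrow X \twoheadrightarrow 1$ is a composite of fibrations, by pullback stability and fibrancy of $X$), so the diagonal $X \to X \times X$ admits a factorization $X \to PX \twoheadrightarrow X \times X$ as a trivial cofibration followed by a fibration. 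The composite $PX \twoheadrightarrow X$ with the first projection is then a fibration; by 2-out-of-3 it is an equivalence, hence trivial, hence (by the just-mentioned coincidence) acyclic, yielding the required strong path object.

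For the "if" direction, assume the three conditions hold; I would define equivalences using the weak model structure of $\Ccal$ (from intro_localization, with the standard extension to arbitrary arrows via fibrant and cofibrant replacement). The 2-out-of-3 property follows from the general theory of equivalences in a weak model category. The two required factorizations for arrows with fibrant target come directly from the premodel weak factorization systems: the anodyne fibration side is in particular a core acyclic fibration (since its target is fibrant, so is its domain), hence an equivalence, hence a trivial fibration; the anodyne cofibration side is similarly a trivial cofibration. Cofibrations are closed under pushout as a left class, and the terminal object is tautologically fibrant. The crucial lifting axiom is: given a cofibration $i$ and a core fibration $p$ with one of them an equivalence. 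If $p$ is an equivalence then $p$ is core acyclic, and by condition 3 (core right saturation) it is anodyne, so any cofibration lifts against it. If instead $i$ is an equivalence then $i$ is an acyclic cofibration, which by definition has LLP against all fibrations between fibrant objects; this suffices because a core fibration $p$ with fibrant target $Y$ has fibrant domain by composition with the fibration $Y \to 1$.

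Uniqueness of the equivalences follows from the dual of prop:saturation_of_semi_structure(4) together with the characterization in rk:equivalence_implies_WMS, which forces the class to coincide with the weak model equivalences. For the Spitzweck addendum: by the dual of prop:saturation_of_semi_structure(5) any Spitzweck right semi-model structure is left saturated. Conversely, if $\Ccal$ is left saturated, then acyclic cofibrations equal anodyne cofibrations; combined with the Fresse coincidence \emph{trivial = acyclic} for cofibrations, this identifies the class of trivial cofibrations with the left class of the premodel weak factorization system (anodyne cofibrations, fibrations), producing the Spitzweck property. I expect the main obstacle to lie in the second case of the lifting axiom in the "if" direction: one must recognize that a core fibration in the Fresse right semi-model sense is really a fibration between fibrant objects, which hinges on the pleasant but easily overlooked fact that a fibration into a fibrant object has fibrant domain, so that the weak model notion of acyclic cofibration already supplies the required lifts.
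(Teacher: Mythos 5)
Your proposal is correct and is essentially the paper's own argument: the paper obtains \cref{Th:recog_RMS} purely as the categorical dual of \cref{Th:recog_LMS}, and your sketch is exactly that dualization (the dual of \cref{prop:saturation_of_semi_structure} for the ``only if'' direction, and the duals of \cref{prop:localization_Strong_cylinder} and \cref{prop:tractable_acyc_fib} for the ``if'' direction and for the lifting axiom when the cofibration is the equivalence). The only point you leave implicit is where condition (2) is actually consumed in the ``if'' direction: the strong path objects of fibrant objects are what make your ``standard extension to arbitrary arrows via fibrant replacement'' well defined, namely in the dual of \cref{prop:localization_Strong_cylinder}, where they are used to show that any two lifts against a given acyclic fibration (dually, cofibration) are homotopic.
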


The rest of the section is devoted to the proof of \cref{Th:recog_LMS}.

At this point it is clear that assuming the existence of a left semi-model structure on $\Ccal$ the three conditions of \cref{Th:recog_LMS} holds: The existence of the weak model structure and the saturation properties have been showed \ref{prop:saturation_of_semi_structure}, and finally, given any cofibrant object $X$, a factorization as a cofibration and anodyne fibration of its codiagonal map:

\[ X \coprod X \hookrightarrow IX \overset{\sim}{\twoheadrightarrow } X \] 

produce a strong cylinder object as the map $X \hookrightarrow IX$ will be acyclic by $2$-out-of-$3$. It is also clear that in this case the left semi-model structure is unique as one can characterize equivalences in terms of the factorization systems.

 \begin{assumption}\label{assumption_2} From now one, we consider a premodel category $\Ccal$ satisfying only the first two conditions of \cref{Th:recog_LMS}, i.e. it is a weak model category where every cofibrant object has a strong cylinder object.

We will construct the class of equivalences so that, up to saturation, it is a left semi-model category. This is achieved by first showing that the definition of the homotopy category of $\Ccal$ as a weak model category can be extended as a localization of the whole category $\Ccal$, instead of localization of its full subcategory of fibrant or cofibrant objects. 
\end{assumption}

\begin{remark}The reason why we do not assume saturation at this point is simply because it is very inessential to the proof and only plays a small role at the end. While this condition is easy to obtain in the combinatorial case (as we will see in the next section), it is much harder to obtain for general premodel categories (or in a constructive mathematics where the result of \cref{sec:changing_combinatorial} are not available), so we believe it is important to make explicit how much can be deduced without this assumptions.\end{remark}

We recall (see \ref{intro_localization}) that the homotopy category of a weak model category can be described as the localization of its full subcategory of cofibrant objects at (core) acyclic cofibrations.

\begin{prop}\label{prop:localization_Strong_cylinder}
Under the \cref{assumption_2}, the functor:

 \[ Ho(\Ccal) \simeq \Ccal^{cof}[(\text{core acyclic cof.})^{-1}] \rightarrow \Ccal[T^{-1}] \]

is an equivalence, where $T$ is the class of all acyclic fibrations and all core acylic cofibrations.
\end{prop}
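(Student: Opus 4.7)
My plan is to prove the proposition by exhibiting an explicit quasi-inverse functor $\bar G \colon \Ccal[T^{-1}] \to Ho(\Ccal)$. Essential surjectivity of the given functor is immediate: for every $X \in \Ccal$, factor $\emptyset \to X$ as $\emptyset \hookrightarrow X^{cof} \overset{\ano}{\twoheadrightarrow} X$; the anodyne fibration $p_X \colon X^{cof} \twoheadrightarrow X$ lies in $T$, so $X \simeq X^{cof}$ in $\Ccal[T^{-1}]$.

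For the quasi-inverse, fix a cofibrant replacement $p_X \colon X^{cof} \overset{\ano}{\twoheadrightarrow} X$ for every $X$, taking $X^{cof} = X$ and $p_X = \id$ when $X$ is already cofibrant. For $f \colon X \to Y$ in $\Ccal$, choose a lift $\tilde f \colon X^{cof} \to Y^{cof}$ of $f \circ p_X$ against the anodyne fibration $p_Y$ (which exists because $X^{cof}$ is cofibrant) and set $G(f) := [\tilde f] \in Ho(\Ccal)$. The crucial step, and the only one that actually requires the strong cylinder hypothesis, is that $[\tilde f]$ is independent of the chosen lift. Given two such lifts $\tilde f_0, \tilde f_1$, the strong cylinder $X^{cof} \coprod X^{cof} \hookrightarrow IX^{cof} \xrightarrow{r} X^{cof}$ (supplied by the second hypothesis) together with the pair $(\tilde f_0, \tilde f_1)$ defines a commutative square with $p_Y$ whose left side is a cofibration between cofibrant objects, so a homotopy $H \colon IX^{cof} \to Y^{cof}$ with $H \circ e_i = \tilde f_i$ exists. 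Both structure embeddings $e_0, e_1 \colon X^{cof} \rightrightarrows IX^{cof}$ are core acyclic cofibrations (the first by the strong cylinder axiom, the retraction $r$ and hence the second embedding being acyclic by 2-out-of-3), and as sections of the common acyclic $r$ they satisfy $[e_0] = [r]^{-1} = [e_1]$ in $Ho(\Ccal)$. Therefore $[\tilde f_0] = [H \circ e_0] = [H \circ e_1] = [\tilde f_1]$. Functoriality of $G$ follows at once: $\tilde g \circ \tilde f$ is a valid lift of $g \circ f$, and lifts have just been shown to be unique in $Ho(\Ccal)$.

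To extend $G$ to $\bar G \colon \Ccal[T^{-1}] \to Ho(\Ccal)$ I verify it inverts every map in $T$. A core acyclic cofibration is sent to itself and is already inverted in $Ho(\Ccal)$ by definition. For an acyclic fibration $t \colon X \overset{\sim}{\twoheadrightarrow} Y$, the right lifting property of $t$ against the core cofibration $\emptyset \hookrightarrow Y^{cof}$ produces $\sigma \colon Y^{cof} \to X$ with $t \sigma = p_Y$, which itself lifts through $p_X$ to $\tilde \sigma \colon Y^{cof} \to X^{cof}$. Applying the uniqueness argument twice (once to the pair $(\tilde t \tilde \sigma, \id_{Y^{cof}})$ viewed as lifts of $p_Y$ against $p_Y$, and once to the pair $(\sigma \tilde t, p_X)$ viewed as lifts of $t p_X$ against $t$, with the strong cylinders taken on $Y^{cof}$ and on $X^{cof}$ respectively) shows $[\tilde t \tilde \sigma] = \id_{Y^{cof}}$ and $[\tilde \sigma \tilde t] = \id_{X^{cof}}$ in $Ho(\Ccal)$, so $[\tilde \sigma]$ is a two-sided inverse of $[\tilde t]$. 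Finally, $\bar G \circ \bar i = \id_{Ho(\Ccal)}$ holds on the nose by our convention on cofibrant objects, and the family $p_X$ defines a natural isomorphism $\bar i \circ \bar G \simeq \id_{\Ccal[T^{-1}]}$, with naturality packaged in the commutative squares defining the lifts $\tilde f$. The main obstacle throughout is the uniqueness of lifts up to $Ho(\Ccal)$, which the strong cylinder hypothesis is precisely designed to provide without any fibrancy assumption on the target.
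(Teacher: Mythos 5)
Your overall strategy is sound and is essentially an inlining of what the paper does by citation: the paper's proof simply verifies the hypotheses of (the dual of) the general localization lemma from the earlier paper on weak model categories (Lemma 2.2.5 there) --- existence of cofibrant replacements via anodyne fibrations, existence of lifts of maps from cofibrant objects against acyclic fibrations with cofibrant source, uniqueness of such lifts up to homotopy supplied by the strong cylinder, and closure of the acyclic fibrations under composition --- and lets that lemma produce the quasi-inverse. You build the quasi-inverse by hand from exactly the same ingredients, which is a legitimate alternative and makes the mechanism visible; the cost is that you must verify functoriality, the inversion of $T$, and the two natural isomorphisms yourself, and one of these verifications has a gap.

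The gap is in the claim $[\tilde\sigma\tilde t]=\id_{X^{cof}}$. You invoke your uniqueness argument for the pair $(\sigma\tilde t,\,p_X)$ viewed as lifts of $tp_X$ against $t$. But these are maps $X^{cof}\to X$, and $X$ need not be cofibrant: the lifting square against the acyclic fibration $t$ produces a homotopy $H'\colon IX^{cof}\to X$ whose target lies outside $\Ccal^{cof}$, so it yields no equation in $Ho(\Ccal)=\Ccal^{cof}[(\text{core acyclic cof.})^{-1}]$ --- and even granting one, it would compare $\sigma\tilde t$ with $p_X$, not $\tilde\sigma\tilde t$ with $\id_{X^{cof}}$. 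The repair is routine: since $IX^{cof}$ is cofibrant and $p_X$ is an anodyne fibration, lift $H'$ to $\tilde H\colon IX^{cof}\to X^{cof}$ with $p_X\tilde H=H'$; then $\tilde He_0$ and $\tilde\sigma\tilde t$ are both lifts of $\sigma\tilde t$ through $p_X$ between cofibrant objects, hence agree in $Ho(\Ccal)$ by your correctly stated uniqueness, likewise $\tilde He_1$ and $\id_{X^{cof}}$, and $[\tilde He_0]=[\tilde He_1]$ because $[e_0]=[e_1]$. With this patch the argument is complete; the rest (well-definedness of $G$ via the strong cylinder, functoriality, inversion of core acyclic cofibrations, the identity $[\tilde t\tilde\sigma]=\id_{Y^{cof}}$, and the two natural isomorphisms) checks out.
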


\begin{proof} We apply (the dual of) Lemma 2.2.5 of \cite{henry2018weakmodel}, with $\Ccal \colon  =\Ccal$, $\Dcal \colon = \Ccal^{cof} \subset \Ccal$, $W'$ the class of core acylic cofibrations and $W$ the class of acyclic fibrations. We check all the conditions:

\begin{enumerate}

\item $\Ccal^{cof}[(W')^{-1}]$ exists and is the homotopy category of $\Ccal$.

\item Given an object $A \in \Ccal$, one has a cofibrant replacement $A^c \overset{\ano}{\twoheadrightarrow} A$ by factoring the map $\emptyset \rightarrow A$ as a cofibration followed by an anodyne fibration.

\item Given $A \in \Ccal$, $C,C'$ two cofibrant objects of $\Acal$ and $f\colon C \rightarrow A$ an arrow in $W$, i.e. an acyclic fibration, then for any solid diagram:

\[
\begin{tikzcd}
  & C \ar[d,"f"] \\
C' \ar[ur,dotted,"v"] \ar[r,"g"] & A
\end{tikzcd}
\]

there is a dotted arrow $v$ making the triangle commute: it is obtained by the lifting property of $C'$ against $f$.

\item Given $v$ and $v'$ two lifts as above, one can form the solid diagram, and its diagonal filler:

\[
\begin{tikzcd}
C' \coprod C' \ar[d,hook] \ar[rr,"(v;v')"] &  &  C \ar[d,"f"] \\
I C' \ar[r] \ar[urr,dotted,"h"{description}] & C' \ar[r,"g"] & A 
\end{tikzcd}
\]

$h$ proves that $v$ and $v'$ are equal in the homotopy category $Ho(\Ccal) =\Ccal^{cof}[(W')^{-1}]$.

\item $W$ is stable under composition. \end{enumerate} \end{proof}

\begin{definition}\label{def:StrongCylequiv} One says that a morphism in $\Ccal$ is an equivalence if it is invertible in the localization of \ref{prop:localization_Strong_cylinder}. \end{definition}

\begin{remark}\label{rk:acyclic_fib_are_eq} It is immediate from the definition that acyclic fibrations and core acyclic cofibrations are equivalences.
\end{remark}

\begin{remark}\label{rk:compatibility_of_eq} The functor $\Ccal^{c \vee f}[V^{-1}] \rightarrow \Ccal[T^{-1}]$ where $T$ is as in \cref{prop:localization_Strong_cylinder} and $V$ is the class all core acyclic (co)fibrations is an equivalence as both are equivalent to $\Ccal^{cof}[(\text{Acyclic cof.})^{-1}]$ in a compatible way. Indeed one equivalence is proved in \cref{prop:localization_Strong_cylinder} and the other is Theorem 2.2.6 of \cite{henry2018weakmodel}, which was proved in the exact same way.

This immediately implies the compatibility of our new definition of equivalences with the equivalences of the weak model structure as, in both cases, the equivalences are the maps invertible in these equivalent localizations. In particular a core cofibration or a core fibration is acyclic if and only if it is an equivalence. Assuming core left saturation we can improve this to:
\end{remark}

\begin{prop}\label{prop:tractable_acyc_fib} If $\Ccal$ satisfies \cref{assumption_2} and is core left saturated, then a fibration in $\Ccal$ is an equivalence in the sense of \cref{def:StrongCylequiv} if and only if it is an acylic fibration.\end{prop}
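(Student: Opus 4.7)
The ``if'' direction is immediate from \cref{rk:acyclic_fib_are_eq}, so the entire task is the converse: start from a fibration $p\colon X \twoheadrightarrow Y$ which is an equivalence in the sense of \cref{def:StrongCylequiv} and show that it has the right lifting property against all core cofibrations. The strategy is to reduce to the case of a fibration with \emph{cofibrant domain} by pre-composing with an anodyne-fibration cofibrant replacement, then extract a cofibration that is forced to be a core acyclic cofibration, and finally apply the core left saturation hypothesis to convert it to an anodyne cofibration so that the retract lemma applies.

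Concretely, factor $\emptyset \to X$ as a cofibration followed by an anodyne fibration to obtain $c\colon X^c \overset{\ano}{\twoheadrightarrow} X$ with $X^c$ cofibrant. Then $pc$ is a fibration (composition of two fibrations) with cofibrant domain, and by \cref{rk:acyclic_fib_are_eq} together with 2-out-of-3 applied to the localization of \cref{prop:localization_Strong_cylinder}, $pc$ is an equivalence. Now factor $pc = q \circ j$ with $j\colon X^c \hookrightarrow Z$ a cofibration and $q\colon Z \overset{\ano}{\twoheadrightarrow} Y$ an anodyne fibration. Since $q$ is an equivalence (again by \cref{rk:acyclic_fib_are_eq}), 2-out-of-3 forces $j$ to be an equivalence; but $j$ has cofibrant domain, so \cref{rk:compatibility_of_eq} identifies $j$ with a core acyclic cofibration, which by the core left saturation hypothesis is an anodyne cofibration. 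Then $j$ has the left lifting property against the fibration $pc$, so the retract lemma applied to the square
\[
\begin{tikzcd}
X^c \ar[d,equal] \ar[r,hook,"j"] & Z \ar[d,two heads,"q"] \\
X^c \ar[r,"pc"{swap}] & Y
\end{tikzcd}
\]
exhibits $pc$ as a retract of $q$. Since acyclic fibrations are closed under retracts, $pc$ is itself an acyclic fibration.

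It remains to transfer this lifting property from $pc$ back to $p$. Given a lifting problem against a core cofibration $i\colon A \hookrightarrow B$,
\[
\begin{tikzcd}
A \ar[d,hook,"i"{swap}] \ar[r,"a"] & X \ar[d,two heads,"p"] \\
B \ar[r,"b"{swap}] & Y,
\end{tikzcd}
\]
the cofibrancy of $A$ and the fact that $c$ is an anodyne fibration yield a lift $a'\colon A \to X^c$ with $ca' = a$. The resulting outer square against $pc$ admits a diagonal filler $\ell\colon B \to X^c$ since $pc$ is acyclic and $i$ is a core cofibration; the composite $c\ell\colon B \to X$ is then easily checked to be the desired lift. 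This completes the argument.

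The one place where care is required is the appeal to 2-out-of-3 for equivalences in the sense of \cref{def:StrongCylequiv}: this holds formally because they are by definition the arrows becoming invertible in a localization, so no model-categorical input is needed beyond what has already been established in \cref{prop:localization_Strong_cylinder} and \cref{rk:compatibility_of_eq}.
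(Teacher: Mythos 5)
Your proof is correct and follows essentially the same route as the paper: factor the fibration (after pre-composition with a cofibrant replacement) as a core cofibration followed by an anodyne fibration, use $2$-out-of-$3$ and core left saturation to upgrade the cofibration to an anodyne one, and conclude by the retract lemma. The only cosmetic difference is in the final reduction from $pc$ back to $p$: you verify the lifting property directly by chasing the lift through the anodyne fibration $c$, whereas the paper invokes the cancellation property of \cref{lem:2outof6_for_acyclic} applied to $c$ and $pc$; both work.
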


\begin{proof} That fact that acyclic fibration are equivalences follows immediately from \cref{def:StrongCylequiv}. We need to show that conversely a fibration which is an equivalence is acyclic. We first prove it for a fibration $f\colon X \rightarrow Y$ with cofibrant domain. We factor $f$ as a (core) cofibration followed by an anodyne fibration. The anodyne fibration is acyclic, and hence is an equivalence. It follows by $2$-out-of-$3$ that the core cofibration is an equivalence and hence acyclic as mentioned in \cref{rk:compatibility_of_eq}. In particular by the saturation assumptions on $\Ccal$, this core acyclic cofibration is an anodyne cofibration, i.e. it has the left lifting property against all fibration, including $f$, and it follows by the usual retract lemma that $f$ is a retract of the anodyne fibration part of the factorization, hence an anodyne fibration itself.

For the case of a fibration $f\colon X \rightarrow Y$ with not necessarily cofibrant domain $X$, one takes a cofibrant replacement $p \colon X^{cof} \overset{\sim}{\twoheadrightarrow} X$ of the domain and uses that $f$ is a fibration and both $p$ and $f p$ are acyclic fibrations by the first half of the proof to conclude that $f$ is an acyclic fibration (\cref{lem:2outof6_for_acyclic}).
\end{proof}

This concludes the proof of \cref{Th:recog_LMS}: One direction, as well as the uniqueness of the class of equivalence has been discussed before. Conversely, we have showed that under the assumption of \cref{Th:recog_LMS} we have a class of equivalences satisfying $2$-out-of-$3$ and stable under retract (in fact, satisfying $2$-out-of-$6$), such that a fibration or a core cofibration is acyclic if and only if it is an equivalence. We can hence deduce all the conditions of \cref{left_semi_model} by simply using the factorization and lifting property in premodel categories.

For the case of Spitzweck right semi-model categories, we have already noticed that they are right saturated in \cref{prop:saturation_of_semi_structure}.\ref{prop:saturation_of_semi_structure:spitzweck_right_sat}, and conversely, if we also assume that $\Ccal$ is right saturated then (cofibration, acyclic fibration) do form a weak factorization system as it coincides with the (cofibration,anodyne fibration) weak factorization system.

\section{Saturation}
\label{sec:changing_combinatorial}

The goal of this section is to study how any combinatorial or accessible premodel category can be modified so that it satisfies any of the saturation properties defined in \ref{def:saturation}, without modifying its core, and preserving its combinatorial or accessible character.

In my opinion these are the results that really give strength to \cref{Th:recog_LMS,Th:recog_RMS}, and the reason why these theorems are considerably more interesting for combinatorial and accessible categories than for general premodel categories. Namely, they show that the condition for being a semi-model category can be split into two distinct parts. Firstly, being a weak model structure with strong cylinder objects, which are conditions only involving the core of the premodel structure and are often very easy to obtain from an enrichment or a monoidal structure as shown in \cref{sec:Cisinki_Olschok}. Secondly, saturation conditions that can always be imposed by modifying the (combinatorial or accessible) premodel structure without changing the core, i.e. without affecting the validity of the first set of conditions nor the resulting homotopy theory.

The main results of this section can be summarized in the following theorem:

\begin{theorem}\label{th:main_saturation} In the category of accessible premodel categories and left Quillen functors between them:

\begin{itemize}

\item The full subcategories of left saturated and core left saturated accessible premodel categories are reflective, with reflection respectively denoted $\Ccal \rightarrow \Lb \Ccal$ and $\Ccal \rightarrow \Lb^c \Ccal$.

\item The full subcategories of right saturated and core right saturated accessible premodel categories are co-reflective, with coreflection respectively denoted $\R \Ccal \rightarrow \Ccal$ and $\R^c \Ccal \rightarrow \Ccal$.

\end{itemize}

These premodel categories $\Lb \Ccal, \Lb^c \Ccal, \R \Ccal$ and $\R^c \Ccal$ all have the same underlying category and the same core as $\Ccal$. $\Lb \Ccal$ and $\Lb^c \Ccal$ also have the same cofibrations, acyclic cofibrations and anodyne fibrations as $\Ccal$ and $\R \Ccal$ and $\R^c \Ccal$ have the same anodyne cofibrations, fibrations and acyclic fibrations as $\Ccal$.

Moreover, if $\Ccal$ is $\kappa$-combinatorial (resp. $\kappa$-accessible) for $\kappa$ an uncountable regular cardinal, then they are all $\kappa$-combinatorial (resp. $\kappa$-accessible) as well\footnote{In particular, the theorem still holds if one replaces accessible by combinatorial everywhere.}. Finally, if $\Ccal$ satisfies any of the saturation properties of \cref{def:saturation} then they all satisfy the same property.

\end{theorem}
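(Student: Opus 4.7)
The plan is to construct the four (co)reflection functors separately, and by the evident left--right duality it suffices to describe $\Lb$ and $\Lb^c$. I will focus on $\Lb^c$; the construction of $\Lb$ is identical after replacing ``core acyclic cofibration'' by ``acyclic cofibration'' throughout. The idea is to leave the first weak factorization system (cofibration, anodyne fibration) of $\Ccal$ untouched and modify only the second: the new anodyne cofibrations should be the class generated, in the sense of Garner's algebraic small object argument, by the old anodyne cofibrations together with the core acyclic cofibrations of $\Ccal$; equivalently the new fibrations are the old fibrations which additionally lift against every core acyclic cofibration.

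The main obstacle is to show that this generating data is accessible, so that the resulting weak factorization system is $\kappa$-accessible (and combinatorial when $\Ccal$ is). For this I would appeal to the machinery of \cref{sec:apendix_awfs}: since $\Ccal$ is $\kappa$-accessible with $\kappa$ uncountable, the class of fibrant objects is the right orthogonal of a small-sized generating category of anodyne cofibrations and is therefore $\kappa$-accessibly embedded in $\Ccal$, whence the full subcategory of fibrations between fibrant objects is $\kappa$-accessibly embedded in $\mathrm{Arr}(\Ccal)$. Its left perpendicular, intersected with cofibrations, is the class of core acyclic cofibrations, and this intersection inherits an accessible presentation. (Alternatively, one may encode core acyclic cofibrations as the closure of anodyne cofibrations under the left cancellation operation from \cref{prop:charac_acyclic_2outof_3}, which is also a small-sized construction.) Combining this category of generators with the existing one for anodyne cofibrations yields the desired $\kappa$-accessible weak factorization system.

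The remaining verifications are routine. Cofibrations form the left class of the unchanged first weak factorization system and are hence unchanged; likewise anodyne fibrations. The acyclic cofibrations depend only on the core (the fibrations between fibrant objects), so it suffices to check that the core is preserved: any fibration of $\Ccal$ between fibrant objects is automatically a new fibration, since by definition every core acyclic cofibration lifts against it, and conversely every new fibration is still an old fibration. Preservation of the saturation properties of \cref{def:saturation} is then immediate, as each of them is a property of classes that are either unchanged outright or unchanged on the core; for instance core right saturation concerns only core acyclic fibrations and anodyne fibrations, which are both preserved.

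To establish the reflection property, let $F \colon \Ccal \to \Dcal$ be a left Quillen functor with $\Dcal$ core left saturated. The underlying category of $\Lb^c \Ccal$ is $\Ccal$, so we only need to verify that $F$ remains left Quillen when its source is regarded as $\Lb^c \Ccal$. Preservation of cofibrations and anodyne fibrations is automatic. For the new anodyne cofibrations it suffices to check preservation on generators: old anodyne cofibrations are sent to anodyne cofibrations of $\Dcal$ because $F$ was left Quillen on $\Ccal$, while core acyclic cofibrations of $\Ccal$ are sent to core acyclic cofibrations of $\Dcal$ by \cref{lem:Qadj_pres_acyclic}, and these are in turn anodyne in $\Dcal$ by core left saturation of the target. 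The resulting factorization of $F$ through $\Ccal \to \Lb^c \Ccal$ is manifestly unique, which gives the universal property.
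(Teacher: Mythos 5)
Your overall strategy coincides with the paper's: keep the (cofibration, anodyne fibration) system untouched, enlarge the anodyne cofibrations by the (core) acyclic cofibrations, observe that the core --- and hence the class of acyclic cofibrations --- is unchanged, and derive the universal property from \cref{lem:Qadj_pres_acyclic} together with saturation of the target. Those parts of your argument match \cref{Constr:Saturation}, \cref{lem:saturation_1} and the second half of the paper's proof essentially verbatim.

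The gap is in the accessibility step, which is exactly where the paper does its real work. First, the class you describe --- the left perpendicular of the fibrations between fibrant objects, intersected with the cofibrations --- is the class of \emph{acyclic} cofibrations (\cref{def:acyclic_map}), not of \emph{core} acyclic cofibrations: the restriction to cofibrant domain is an additional constraint, and it is precisely what distinguishes $\Lb^c$ from $\Lb$. The paper imposes it by transferring through the category of coalgebras for an accessible cofibrant-replacement copointed endofunctor (\cref{lem:Constructing_some_WFS}.\ref{lem:Constructing_some_WFS:1}); nothing in your sketch produces this. Second, knowing that a class of arrows is $\kappa$-accessibly embedded in $\mathrm{Arr}(\Ccal)$ does not by itself yield that the associated lifting classes underlie a $\kappa$-accessible weak factorization system; one needs the transfer and infimum results of \cref{th:appendix_Main} (the infimum of accessible weak factorization systems in particular requires $\kappa$ uncountable and a genuine argument via comonadicity of pseudo-pullbacks of coalgebra categories, \cref{cor:fiber_prod_comonadic}). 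Your alternative suggestion, closing the anodyne cofibrations under the left cancellation operation of \cref{prop:charac_acyclic_2outof_3}, is not a ``small-sized construction'' in any sense that feeds into the (algebraic) small object argument. So the skeleton is right, but the theorem's technical content --- that these classes really do generate weak factorization systems of the stated accessibility rank --- is asserted rather than proved.
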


\begin{remark} In particular, if $\Ccal$ is a combinatorial or accessible weak model category, all the left Quillen functors  $\R \Ccal, \R^c \Ccal \to \Ccal \to \Lb \Ccal, \Lb \Ccal^c$ are Quillen equivalences as they induce equivalences of categories between the cores, and only cores are involved in computing homotopy categories. They also have the same weak equivalences (between fibrant or cofibrant objects).
\end{remark}

Explicit descriptions of these saturation constructions are given by \cref{lem:saturation_1} and \cref{Constr:Saturation}. Of course if one chooses to work instead in the category whose morphism are right Quillen functors, then the right saturation constructions $\R$ and $\R^c$  become reflections while the left saturation constructions $\Lb$ and $\Lb^c$ become co-reflections. An immediate corollary of the theorem is:

\begin{cor} Given an accessible (resp. combinatorial) premodel category $\Ccal$ there exists a bi-saturated accessible (resp. combinatorial) premodel structure on $\Ccal$ with the same core as $\Ccal$. \end{cor}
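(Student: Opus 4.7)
The plan is to deduce the corollary by iterating the two saturation reflections provided by \cref{th:main_saturation}. Concretely, given an accessible (resp.\ combinatorial) premodel category $\Ccal$, I would form $\Lb \R \Ccal$ and show that it has all the required properties. (The symmetric construction $\R \Lb \Ccal$ would work equally well.)

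First, the theorem gives us $\R\Ccal$, a right saturated accessible (resp.\ combinatorial) premodel category with the same underlying category, the same core, and the same presentability rank as $\Ccal$ (for any uncountable regular $\kappa$ witnessing accessibility). Next, applying $\Lb$ to $\R\Ccal$ produces an accessible (resp.\ combinatorial) premodel category $\Lb \R \Ccal$ which is left saturated, again with the same underlying category and the same core as $\R\Ccal$, hence as $\Ccal$. The presentability rank is preserved at each step, so $\Lb\R\Ccal$ is accessible (resp.\ combinatorial).

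The only nontrivial point is checking that applying $\Lb$ does not destroy the right saturation of $\R\Ccal$. This is exactly the content of the final clause of \cref{th:main_saturation}: if the input satisfies a given saturation property of \cref{def:saturation}, then so do all four of the constructions $\Lb, \Lb^c, \R, \R^c$ applied to it. Instantiating this clause with the right saturation property and with input $\R\Ccal$ shows that $\Lb\R\Ccal$ is still right saturated, while it is left saturated by construction. Hence $\Lb\R\Ccal$ is bi-saturated, as required.

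I do not expect any real obstacle here, since all the work is packaged inside \cref{th:main_saturation}; the only care needed is to invoke the correct preservation clauses at each step. If one wanted to maintain only the \emph{core} variants of saturation, one could instead form $\Lb^c \R^c \Ccal$ by the same argument, using the core preservation clauses of the theorem.
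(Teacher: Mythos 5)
Your proposal is correct and is exactly the paper's argument: the paper's proof likewise observes that $\R\Lb\Ccal$ or $\Lb\R\Ccal$ works, relying on the final preservation clause of \cref{th:main_saturation}. You have simply spelled out the details that the paper leaves implicit.
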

\begin{proof} By \cref{th:main_saturation}, both $\R \Lb \Ccal$ or $\Lb \R \Ccal$ have this property. \end{proof}

\begin{remark} I expect $\R \Lb \Ccal$ and $\Lb \R \Ccal$ to be different in general. For example, a quick computation using the explicit construction below shows that the class of cofibrations of $\Lb \R \Ccal$ is generated by the core cofibrations of $\Ccal$ together with the \emph{anodyne} cofibrations of $\Ccal$, while the class of cofibrations of $\R \Lb \Ccal$ is generated by the core cofibrations of $\Ccal$ together with the \emph{acylic} cofibrations of $\Ccal$. I do not see any reasons for these two classes to be the same in general, though I admit that I did not look for an explicit counterexample.
\end{remark}

We need a few lemmas before one can prove \cref{th:main_saturation}.

\begin{lemma}\label{lem:saturation_1} Let $\Ccal$ be a premodel category, and let $S$ be a class of acyclic cofibrations in $\Ccal$. We assume that there exists a weak factorization system on $\Ccal$ whose right class are the fibrations of $\Ccal$ with the right lifting property against all maps in $S$. And let $\Lcal^S \Ccal$ be the premodel category structure on $\Ccal$ with the same cofibrations and anodyne fibrations as $\Ccal$ and where the anodyne cofibrations and fibration are given by this new weak factorization system. Then:

\begin{enumerate}

\item\label{lem:saturation_1:samecore} $\Lcal^S \Ccal$ also has the same core and the same acyclic cofibrations as $\Ccal$.

\item\label{lem:saturation_1:universality} The identity $\Ccal \rightarrow \Lcal^S \Ccal$ is a left Quillen functor and is universal for left Quillen functors $\Ccal \rightarrow \Dcal$ sending all maps in $S$ to anodyne cofibrations.

\item\label{lem:saturation_1:rightsaturation} If $\Ccal$ is right saturated or core right saturated then $\Lcal^S \Ccal$ has the same property.

\end{enumerate}

\end{lemma}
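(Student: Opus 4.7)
The plan is to verify the three claims in order; each is routine manipulation of lifting properties once we track which classes are shared between the two structures. First I would check that $\Lcal^S \Ccal$ is a genuine premodel category: since $S$ consists of acyclic cofibrations, hence in particular of cofibrations, the anodyne fibrations of $\Ccal$---having the right lifting property against all cofibrations---automatically lie in the hypothesized right class, i.e.\ are new fibrations. Dually, the new anodyne cofibrations then have the left lifting property against all anodyne fibrations of $\Ccal$, and so are cofibrations.

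For (1), cofibrations and cofibrant objects coincide by construction. For fibrant objects, $X$ is new-fibrant iff $X \to 1$ is an old fibration with right lifting against $S$; when $X$ is $\Ccal$-fibrant, $X \to 1$ is a fibration with fibrant target, so the acyclic cofibrations in $S$ lift against it (by the convention used in the proof of \cref{lem:Naiv_when_fibrant_target}), while conversely new fibrations are old fibrations. The same argument shows any old core fibration is a new core fibration, giving equality of cores; acyclic cofibrations, defined via lifting against core fibrations, then coincide as well. For (2), the identity is left Quillen because old anodyne cofibrations lift against old fibrations, hence against the subclass of new fibrations. For its universal property, given $F \dashv G \colon \Ccal \leftrightarrows \Dcal$ left Quillen with $F(S)$ consisting of anodyne cofibrations of $\Dcal$, I would show $F \colon \Lcal^S \Ccal \to \Dcal$ is left Quillen by verifying that $G$ sends fibrations of $\Dcal$ to new fibrations: $G$ preserves old fibrations, and $G(p)$ has the right lifting property against $S$ iff $p$ has it against $F(S)$, which holds by hypothesis.

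For (3), if $\Ccal$ is right saturated then any new acyclic fibration is in particular an old fibration with right lifting against core cofibrations (common to both structures), hence an old acyclic fibration, and by saturation an old anodyne fibration---which equals a new anodyne fibration. The reverse inclusion always holds since anodyne fibrations have right lifting against every cofibration (including those in $S$) and against all core cofibrations. Restricting these arguments to maps with fibrant target handles the core right saturated case. The proof contains no single sharp obstacle; it is careful bookkeeping organized around the key observation that $\Ccal$ and $\Lcal^S \Ccal$ share cofibrations, anodyne fibrations, and---crucially---core fibrations.
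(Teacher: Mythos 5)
Your proposal is correct and follows essentially the same route as the paper: identify that the two structures share cofibrations and core fibrations (hence cores and acyclic cofibrations), translate the left Quillen condition through the adjunction for universality, and observe that a new (core) acyclic fibration is an old one to transfer saturation. The only differences are cosmetic — you add an explicit check that $\Lcal^S\Ccal$ is a premodel category, and for part (3) you argue directly with lifting properties where the paper invokes \cref{lem:Qadj_pres_acyclic}.
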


One can dually consider, if it exists, the premodel structure $\Rcal^K \Ccal$ for $K$ a class of acyclic fibrations, which has all the dual properties.

\begin{proof}
\begin{enumerate}
\item By construction, $\Lcal^S \Ccal$ has the same cofibrations as $\Ccal$. They also have the same core fibrations: any fibration of $\Lcal^S \Ccal $ is in particular a fibration of $\Ccal$, and a core fibration in $\Ccal$ has the right lifting property against all maps in $S$ because they are assumed to be acyclic cofibrations, so it is a fibration of $\Lcal^S \Ccal$. In particular, having the same cofibrations and core fibrations as $\Ccal$, $\Lcal^S \Ccal$ has the same acyclic cofibrations as well.

\item All cofibrations and anodyne cofibrations of $\Ccal$ are cofibrations and anodyne cofibrations in $\Lcal^S \Ccal$ and a left Quillen functor $F\colon  \Ccal \rightarrow \Dcal$ is also a left Quillen functor $\Lcal^S \Ccal \rightarrow \Dcal$ if and only every fibration in $\Dcal$ is sent by the right adjoint to a fibration in $\Lcal^S \Ccal$, i.e. has the right lifting property against all maps in $S$, which happens if and only if all arrows in $S$ are sent to anodyne cofibration by $L$.

\item If one assumes that $\Ccal$ is (core) saturated, and $f$ is a (core) acyclic fibration in $\Lcal^S \Ccal$, then $f$ is also a (core) acyclic fibration in $\Ccal$ (for example because $\Lcal^S \Ccal \rightarrow \Ccal$ is a right Quillen functor and \cref{lem:Qadj_pres_acyclic}). Hence by (core) right saturation of $\Ccal$ it is an anodyne fibration in $\Ccal$. As $\Lcal^S \Ccal$ and $\Ccal$ have the same anodyne fibrations, $f$ is also an anodyne fibration in $\Lcal^S \Ccal$.
\end{enumerate}
\end{proof}

\begin{construction}\label{Constr:Saturation} Given an accessible or combinatorial category $\Ccal$, all the reflections and coreflections claimed in \cref{th:main_saturation} are obtained as special cases of the construction described in \cref{lem:saturation_1}. Explicitly:

\begin{itemize}

\item $\Lb \Ccal$ is $\Lcal^S \Ccal$ for $S$ the class of all acyclic cofibrations.

\item $\Lb^c \Ccal$ is $\Lcal^S \Ccal$ for $S$ the class of all core acyclic cofibrations.

\item $\R \Ccal$ is $\Rcal^K \Ccal$ for $K$ the class of all acyclic fibrations.

\item $\R^c \Ccal$ is $\Rcal^K \Ccal$ for $K$ the class of all core acyclic fibrations.

\end{itemize}

The fact that the corresponding weak factorization systems exist and are appropriately $(\kappa$-)accessible or ($\kappa$-)combinatorial will be proved in \ref{th:main_saturation:proof} by iterated appplications of the following lemma:

\end{construction}

\begin{lemma}\label{lem:Constructing_some_WFS} Fix $\kappa$ an uncountable regular cardinal. Let $(L_1,R_1)$  and $(L_2,R_2)$ two $\kappa$-accessible weak factorization systems on a locally $\kappa$-presentable category $\Ccal$ such that $L_1 \subset L_2$ (or $R_2 \subset R_1$).

\begin{enumerate}

\item\label{lem:Constructing_some_WFS:1} There is a $\kappa$-accessible weak factorization system $(L_3,R_3)$ on $\Ccal$ such that $R_3$ is the class of maps that have the right lifting property against all $L_1$-maps whose domain is $L_2$-cofibrant. If $(L_1,R_1)$ is $\kappa$-combinatorial, then $(L_3,R_3)$ is $\kappa$-combinatorial.

\item\label{lem:Constructing_some_WFS:2} There is a $\kappa$-accessible weak factorization system $(L_4,R_4)$ on $\Ccal$ such that $L_4$ is the class of maps that have the left lifting property against all $R_2$-maps whose target is $R_1$-fibrant. If $(L_2,R_2)$ is $\kappa$-combinatorial, then $(L_4,R_4)$ is $\kappa$-combinatorial.

\end{enumerate}
\end{lemma}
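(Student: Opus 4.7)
The plan is to construct both weak factorization systems by applying Garner's algebraic small object argument, in the accessible form recalled in the appendix \cref{sec:apendix_awfs}, to appropriate full subcategories of arrows. For part (1), let $\Ncal \subseteq \Ccal^\to$ be the full subcategory of $L_1$-maps with $L_2$-cofibrant source. Then $(L_3, R_3)$ will be the weak factorization system cofibrantly generated by $\Ncal$; by the construction of the small object argument, $R_3 = \Ncal^\boxslash$, which is exactly the required description. For part (2), entirely dually, we let $\Mcal \subseteq \Ccal^\to$ be the full subcategory of $R_2$-maps with $R_1$-fibrant target, and take $(L_4, R_4)$ the weak factorization system generated by $\Mcal$, obtaining $L_4 = {}^\boxslash \Mcal$.

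The main step is to verify that $\Ncal$ is a $\kappa$-accessibly embedded $\kappa$-accessible full subcategory of $\Ccal^\to$, and similarly for $\Mcal$. Since $(L_1, R_1)$ is a $\kappa$-accessible weak factorization system, the full subcategory $L_1 \subseteq \Ccal^\to$ is $\kappa$-accessibly embedded and $\kappa$-accessible, by one of the equivalent characterizations recalled in the appendix (via the algebras for Garner's algebraic realization of the WFS). Likewise, using the $\kappa$-accessibility of $(L_2, R_2)$, the full subcategory of $L_2$-cofibrant objects of $\Ccal$ is $\kappa$-accessibly embedded in $\Ccal$. Pulling this back along the source functor $s\colon \Ccal^\to \to \Ccal$, which preserves all colimits, yields a $\kappa$-accessibly embedded subcategory of $\Ccal^\to$. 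Intersecting these two subcategories yields $\Ncal$, which remains $\kappa$-accessibly embedded and $\kappa$-accessible by the standard preservation results for accessible categories. The accessible small object argument applied to $\Ncal$ then produces the desired $\kappa$-accessible WFS $(L_3, R_3)$. The hypothesis $L_1 \subseteq L_2$ underlies the naturality of this setup: when running the small object argument on a map with $L_2$-cofibrant source using $\Ncal$-cells, all intermediate objects remain $L_2$-cofibrant, since pushouts along $L_1 \subseteq L_2$ maps preserve $L_2$-cofibrancy.

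In the $\kappa$-combinatorial case, the $\kappa$-accessibly embedded $\kappa$-accessible category $\Ncal$ is generated, as $\kappa$-filtered colimits in $\Ccal^\to$, by its full subcategory of arrows between $\kappa$-presentable objects; this small subcategory then serves as a small generating family, making $(L_3, R_3)$ $\kappa$-combinatorial. The dual argument handles part (2). The main obstacle I foresee is that a naive guess for the generating set in the combinatorial case---namely, intersecting the generating set $\mathcal{J}_1$ of $(L_1, R_1)$ with $\Ncal$---does not suffice: a generator $j\colon J \to K$ of $(L_1, R_1)$ whose source is not $L_2$-cofibrant is discarded by this intersection, yet it may still be relevant to building arrows in $\Ncal$ as pushouts along attaching maps into $L_2$-cofibrant objects. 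Extracting a small generating set thus requires the abstract density arguments from the theory of accessible categories of arrows collected in the appendix, rather than a direct manipulation of the given generating sets.
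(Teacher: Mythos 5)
Your overall strategy---generating $(L_3,R_3)$ directly from the large category $\Ncal$ of $L_1$-maps with $L_2$-cofibrant domain---is close in spirit to what the paper does, but as written it has two genuine gaps. First, the claim that $\Ncal$ is a $\kappa$-accessible, $\kappa$-accessibly embedded full subcategory of $\Ccal^\rightarrow$ is not actually ``recalled in the appendix'': what \cref{th:acc_wfs_eq_def} and \cref{th:pres_rank}.\ref{th:pres_rank:(co)algebras} provide is that the category of $\Lb_1$-\emph{coalgebras} (resp.\ of coalgebraically $L_2$-cofibrant objects) is locally $\kappa$-presentable with a $\kappa$-accessible forgetful functor; that the \emph{full image} of such a forgetful functor---the class $L_1$ viewed with all commutative squares as morphisms---is accessible and accessibly embedded is a different and substantially harder statement (true for combinatorial classes by results of Lurie and Rosick\'y, with no control on the rank), and nothing in the paper supplies it. Second, and more seriously, even granting accessibility of $\Ncal$, the passage to a small generating subcategory $I$ of $\kappa$-presentable arrows does not follow from density: the plain lifting property $(-)^\wlp$ is not stable under colimits in the arrow category (already monomorphisms in $\set$ fail to be closed under pushouts in $\set^\rightarrow$), so the closure under $\kappa$-filtered colimits that your density argument needs is itself a nontrivial claim; moreover Garner's argument applied to a \emph{category} of arrows produces the class of maps with \emph{coherent} lifts, $I^\alp$, and identifying its image with the plain lifting class $\Ncal^\wlp$ is precisely the hard point. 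You flag this as ``the main obstacle'' but defer it to density results the appendix does not contain. The hypothesis $L_1\subset L_2$ is also not used where you place it (inside a cellular induction); it is needed for this identification step.

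The paper resolves both issues at once by never working with full subcategories of $\Ccal^\rightarrow$: it passes to the locally $\kappa$-presentable category $L_2$-Cof of coalgebras for an accessible cofibrant-replacement copointed endofunctor (\cref{th:pres_rank}.\ref{th:pres_rank:(co)algebras}), \emph{left}-transfers $(L_1,R_1)$ there, and \emph{right}-transfers the result back along the forgetful $\kappa$-adjunction, both steps being covered by \cref{th:appendix_Main}.\ref{th:appendix_Main:left_transfer} and \cref{th:appendix_Main}.\ref{th:appendix_Main:right_transfer}, which also track the accessibility and combinatoriality ranks for you. By adjunction the right class of the resulting system consists of the maps with the right lifting property against the \emph{images} of the transferred left maps, and $L_1\subset L_2$ enters exactly to show that this image is all of $\Ncal$: an $L_1$-map with $L_2$-cofibrant domain is itself an $L_2$-cofibration, hence admits coalgebra structures on both ends making it a morphism of $L_2$-Cof (Lemma 2.12 of \cite{garner2020lifting}). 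If you want to salvage your route, it is this coalgebra-level argument, rather than a density argument in $\Ccal^\rightarrow$, that you would need to supply.
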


\begin{proof}
For \ref{lem:Constructing_some_WFS:1}, let $L_2$-Cof be the category of coalgebras in $\Ccal$ for a $\kappa$-accessible $L_2$-cofibrant replacement copointed endofunctor (such an endofunctor exists by point \ref{th:acc_wfs_eq_def:accessible} of \cref{th:acc_wfs_eq_def}). By point \ref{th:pres_rank:(co)algebras} of \cref{th:pres_rank}, the category $L_2$-Cof is locally $\kappa$-presentable and the forgetful functor $V\colon L_2$-Cof $\rightarrow \Ccal$ is a $\kappa$-left adjoint in the sense of \cref{not:kappa_adjunction}. We then consider $(L'_1,R'_1)$ the left transfer of $(L_1,R_1)$ to $L_2$-Cof,  that is $L'_1$ is the class of morphisms whose image by the forgetful functor to $\Ccal$ is in $L_1$. It exists and is $\kappa$-accessible (and $\kappa$-combinatorial if $(L_1,R_1)$ is) by \cref{th:appendix_Main}.\ref{th:appendix_Main:left_transfer}. Finally, we define $(L_3,R_3)$ to be the right transfer of $(L'_1,R'_1)$, which also exists and is $\kappa$-accessible (or $\kappa$-combinatorial) by \cref{th:appendix_Main}.\ref{th:appendix_Main:right_transfer}. A map is in $R_3$ if and only if it has the left lifting property against the images by the forgetful functor of all of the maps in $L'_1$. To conclude, we show that this is exactly the class of $L_1$-maps with $L_2$-cofibrant domain. Indeed given an $L_1$-map $f\colon X \rightarrow Y$ with $\emptyset \rightarrow X \in L_2$, then as $f$ is an $L_2$-cofibration one can put an $L_2$-Cof structure on $X$ and $Y$ which makes $f$ into a morphism in $L_2$-Cof (see for example Lemma 2.12 in \cite{garner2020lifting}) and this concludes the proof.

The proof of \ref{lem:Constructing_some_WFS:2} is dual: we consider instead the category $R_1$-Fib of algebras for the pointed $R_1$-fibrant replacement endofunctor (which is a locally $\kappa$-presentable category with a $\kappa$-adjunction with $\Ccal$ also by \cref{th:pres_rank}.\ref{th:pres_rank:(co)algebras}), and we right transfer $(L_2,R_2)$ to $R_1$-Fib and then left transfer it back to $\Ccal$, to get also a $\kappa$-accessible ($\kappa$-combinatorial if $(L_2,R_2)$ is) weak factorization system on $\Ccal$ for the same reason as above. The verification that the weak factorization system $(L_4,R_4)$ obtained in this way has the property claimed in the lemma is exactly as above (it does not involve accessibility conditions so it can be dualized).

\end{proof}

\begin{proof1}\label{th:main_saturation:proof} One first shows that $\Lb \Ccal, \Lb^c \Ccal, \R \Ccal, \R^c \Ccal$ as defined in \cref{Constr:Saturation} all exist and are $\kappa$-accessible or $\kappa$-combinatorial as soon as $\Ccal$ is.

\begin{itemize}

\item The existence of $\Lcal^S \Ccal$ when $S$ is all acyclic cofibrations: We first apply \cref{lem:Constructing_some_WFS}.\ref{lem:Constructing_some_WFS:2} with $(L_1,R_1)=(L_2,R_2) = $(anodyne cofibrations, fibrations).  This gives a weak factorization system whose left class is the class of maps with the left lifting property against core fibrations. We then take the intersection of this with the class of all cofibrations and obtain a weak factorization system whose left class is the class of all acyclic cofibrations (elements of the right class are automatically fibrations). This weak factorization exists and is $\kappa$-accessible (and $\kappa$-combinatorial if $\Ccal$ is) by \cref{th:appendix_Main}.\ref{th:appendix_Main:inf_wfs}.

\item The existence of $\Lcal^S \Ccal$ when $S$ is all core acyclic cofibrations: We first apply \cref{lem:Constructing_some_WFS}.\ref{lem:Constructing_some_WFS:1} to the weak factorization system of the previous point to get a weak factorization system whose right class are the maps with the right lifting property against all core acyclic cofibrations. Using suprema of weak factorization system (as in \cref{nota:before_main_th}) we obtain a weak factorization system whose right class are the fibrations with the lifting property against all acyclic core cofibrations. By \cref{th:appendix_Main}.\ref{th:appendix_Main:sup_wfs} it exists and is $\kappa$-accessible (and $\kappa$-combinatorial as soon as $\Ccal$ is).

\item The existence of $\Rcal^K \Ccal$ when $K$ is the class of all acyclic fibrations or of core acyclic fibrations follows from the exact dual argument exchanging the role of points \ref{lem:Constructing_some_WFS:1} and \ref{lem:Constructing_some_WFS:2} of \cref{lem:Constructing_some_WFS}, and the role of points \ref{th:appendix_Main:sup_wfs} and \ref{th:appendix_Main:inf_wfs} of \cref{th:appendix_Main}.

\end{itemize}

$\Lb \Ccal$ and $\Lb^c \Ccal$ as constructed above have the same cofibrations and same core as $\Ccal$ by \cref{lem:saturation_1}, hence the same acyclic cofibrations as $\Ccal$. As all acyclic cofibrations (resp. core acyclic cofibrations) of $\Ccal$ have been made into anodyne cofibrations in $\Lb \Ccal$ (resp. $\Lb^c \Ccal$) it follows that $\Lb \Ccal$ is left saturated and $\Lb^c \Ccal$ is core left saturated. The dual argument shows that $\R \Ccal$ and $\R^c \Ccal$ are respectively right saturated and core right saturated.

Any left Quillen functor $\Ccal \rightarrow \Dcal$ where $\Dcal$ is (core) left saturated sends all (core) acyclic cofibration to (core) acyclic cofibrations by \cref{lem:Qadj_pres_acyclic}, hence by saturation of $\Dcal$ to anodyne cofibration, and hence by \cref{lem:saturation_1}.\ref{lem:saturation_1:universality} factors (uniquely) through the identity functor $\Ccal \rightarrow \Lb \Ccal$ (or $\Ccal \rightarrow \Lb^c \Ccal$). The dual argument (involving right Quillen functor) gives the universality of $\R \Ccal$ and $\R^c \Ccal$.

Finally the fact that the saturation properties of $\Ccal$ implies the same saturation property of $\Lb \Ccal, \R \Ccal$, etc. follows from \cref{lem:saturation_1}.\ref{lem:saturation_1:rightsaturation} and its dual version. \qed
\end{proof1}

\begin{remark} A different argument, that we will not detail here, shows that if $\Ccal$ is $\omega$-combinatorial then $\R \Ccal$ is $\omega$-combinatorial, but it seems unlikely that this can be extended to any of the other cases. 
\end{remark}

\section{Two-sided semi-model categories}
\label{sec:Two_sided}

Something that came quite as a surprise to the author is that a premodel category can be both a (Spitzweck) left and right semi-model category at the same time, i.e. satisfies all the conditions of \cref{Th:recog_LMS,Th:recog_RMS}, without automatically being a Quillen model category.

\begin{definition}\label{def:twosided} A premodel category $\Ccal$ is said to be a \emph{two-sided semi-model category}, or more simply a \emph{two-sided model category} if:

\begin{itemize}

\item $\Ccal$ is a weak model category.

\item Every fibrant object in $\Ccal$ admits a strong path object and every cofibrant object in $\Ccal$ admits a strong cylinder object.

\item $\Ccal$ is bi-saturated.

\end{itemize}
 
\end{definition}

For example, as we will see in the next section, any bisaturated premodel category wich admits a strong Quillen cylinder as in \cref{def:Strong_Quillen_cylinder} is a two-sided semi-model category.

By \cref{Th:recog_LMS,Th:recog_RMS} a two-sided model category admits both a class of equivalences making it into a left semi-model category and a class of equivalences making it into a right semi-model category. The reason it is not automatically a Quillen model category is that these two classes might not coincide.

\begin{remark}\label{rk:Prop_of_two_sided} Given $\Ccal$ a two-sided semi-model category, we denote by $\Wcal_R$ (resp. $\Wcal_L$) and call ``\emph{right equivalences}'' (resp. ``\emph{left equivalences}''), the class of equivalences making $\Ccal$ into a right (resp. left) semi-model category according to \cref{Th:recog_RMS} (resp. \cref{Th:recog_LMS}). The properties of these classes are summarized below:

\begin{enumerate}

\item\label{rk:Prop_of_two_sided:Core_eq} For arrows between objects that are either fibrant or cofibrant, $\Wcal_R$ and $\Wcal_L$ agree and coincide with the equivalences of $\Ccal$ as a weak model category.

\item\label{rk:Prop_of_two_sided:2outof6} Both classes satisfies $2$-out-of-$6$ and are stable under retracts.

\item\label{rk:Prop_of_two_sideed:L_seeFib} A fibration is anodyne (equivalently acyclic) if and only if it is in $\Wcal_L$ whilst a cofibration is anodyne (equivalently acyclic) if and only if it is in $\Wcal_R$.

\item\label{rk:Prop_of_two_sideed:Two_Loc} The two localizations $\Ccal[\Wcal_R^{-1}]$ and $\Ccal[\Wcal_L^{-1}]$ are equivalent to $Ho(\Ccal) \simeq \Ccal^{c \vee f}[\Wcal^{-1}]$, but this identification gives rise to two different functors $\Ccal \rightrightarrows Ho(\Ccal)$ called respectively the ``right localization'' and the ``left localization functor''. The right localization functor sends each object to a fibrant replacement, while the left localization functor sends each object to a cofibrant replacement.

\item\label{rk:Prop_of_two_sideed:comp_Two_Loc} There is a natural transformation from the left localization functor to the right localization functor given objectwise by the composite $X^{cof} \overset{\ano}{\twoheadrightarrow} X \overset{\ano}{\hookrightarrow} X^{fib} $. Its components are isomorphisms at objects that are either fibrant or cofibrant, but not necessarily for general objects.

\item\label{rk:Prop_of_two_sideed:Eq=local} $\Wcal_R$ is the class of arrows inverted by the right localization functor and $\Wcal_L$ is the class of arrows inverted by the left localization functor.

\end{enumerate}

\end{remark}

\begin{prop}\label{prop:QuillenCharac} Given a two-sided semi-model category $\Ccal$ the following conditions, as well as all their duals, are all equivalent:

\begin{enumerate}

\item\label{prop:QuillenCharac:R=L} $\Wcal_R = \Wcal_L$.

\item\label{prop:QuillenCharac:L_is_Quillen} $\Wcal_L$ makes $\Ccal$ into a Quillen model category.

\item\label{prop:QuillenCharac:trivCof_are_in_L} Every anodyne cofibration is in $\Wcal_L$.

\item\label{prop:QuillenCharac:FibRep} For every arrow $v\colon X \rightarrow Y$ there exists a square:

  \[ \begin{tikzcd}
    X \ar[r,"\in \Wcal_L"] \ar[d,"v"] & X' \ar[d] \\
   Y \ar[r, "\in \Wcal_L"] & Y' \\
  \end{tikzcd} \]

where $X'$ and $Y'$ are fibrant.

\item\label{prop:QuillenCharac:Replace_are_compatible} For every object $X \in \Ccal$, there are some choices of fibrant and cofibrant replacements, such that the composite: 

\[ X^{cof} \overset{\ano}{\twoheadrightarrow} X \overset{\ano}{\hookrightarrow} X^{fib} \]

is an equivalence of $\Ccal$ (seen as weak model category).

\item\label{prop:QuillenCharac:loc_are_equiv} The two localization functors $\Ccal \rightrightarrows Ho(\Ccal)$ (see \cref{rk:Prop_of_two_sided}.\ref{rk:Prop_of_two_sideed:Two_Loc}) are isomorphic.

\end{enumerate}

In particular, if all the objects of $\Ccal$ are fibrant or cofibrant, then all these conditions are satisfied.

\end{prop}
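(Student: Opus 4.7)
The plan is to prove the equivalences via the cycle $(1) \Rightarrow (3) \Rightarrow (4) \Rightarrow (5) \Rightarrow (6) \Rightarrow (1)$ together with $(1) \Leftrightarrow (2)$, and to deduce the corresponding dual statements by noting that conditions (1), (2), (5), and (6) are manifestly self-dual, so that running the same cycle with the duals of (3) and (4) is automatic.

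For $(1) \Leftrightarrow (2)$: writing $\Wcal := \Wcal_L = \Wcal_R$, the right-semi-model version of \cref{prop:saturation_of_semi_structure}.\ref{prop:saturation_of_semi_structure:acyclic_match} gives that a cofibration lies in $\Wcal$ precisely when it is anodyne, and the left-semi-model version gives the analogous statement for fibrations; together with the two weak factorization systems of the premodel structure, the 2-out-of-3 property and retract stability of $\Wcal$, this is precisely a Quillen model structure. Conversely, a Quillen model category is simultaneously a Spitzweck left and a right semi-model category sharing a common class of weak equivalences, so by the uniqueness clauses of \cref{Th:recog_LMS} and \cref{Th:recog_RMS} this class must equal both $\Wcal_L$ and $\Wcal_R$. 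For $(1) \Rightarrow (3)$: by bi-saturation every anodyne cofibration is acyclic, and by the right-semi-model version of \cref{prop:saturation_of_semi_structure}.\ref{prop:saturation_of_semi_structure:acyclic_match} every acyclic cofibration lies in $\Wcal_R = \Wcal_L$.

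For $(3) \Rightarrow (4)$: given $v\colon X \to Y$, pick a fibrant replacement $Y \overset{\ano}{\hookrightarrow} Y'$, factor the composite $X \to Y'$ as an anodyne cofibration $X \overset{\ano}{\hookrightarrow} X'$ followed by a fibration $X' \twoheadrightarrow Y'$, observe that $X'$ is then fibrant, and apply (3) to place both horizontal arrows of the resulting square in $\Wcal_L$. For $(4) \Rightarrow (5)$: apply (4) to a cofibrant replacement $p\colon X^{cof} \overset{\ano}{\twoheadrightarrow} X$, obtaining fibrant targets $X_1, X_2$ with horizontal maps in $\Wcal_L$; then factor the lower arrow $X \to X_2$ as $X \overset{\ano}{\hookrightarrow} X^{fib} \overset{\ano}{\twoheadrightarrow} X_2$, making $X^{fib}$ a genuine fibrant replacement. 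Both $p$ and $X^{fib} \overset{\ano}{\twoheadrightarrow} X_2$ are acyclic fibrations, hence in $\Wcal_L$ by \cref{prop:saturation_of_semi_structure}.\ref{prop:saturation_of_semi_structure:acyclic_match}; combined with $X \to X_2 \in \Wcal_L$ this places the equal composites $X^{cof} \to X \to X_2 = X^{cof} \to X^{fib} \to X_2$ in $\Wcal_L$, and 2-out-of-3 then gives $X^{cof} \to X^{fib} \in \Wcal_L$. Since this arrow goes from a cofibrant to a fibrant object, \cref{rk:Prop_of_two_sided}.\ref{rk:Prop_of_two_sided:Core_eq} identifies it with an equivalence of the weak model structure.

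For $(5) \Rightarrow (6)$: the natural transformation of \cref{rk:Prop_of_two_sided}.\ref{rk:Prop_of_two_sideed:comp_Two_Loc} has components $X^{cof} \to X \to X^{fib}$ viewed in $Ho(\Ccal)$, and this image is independent of the chosen replacements, so (5) makes every component an isomorphism. For $(6) \Rightarrow (1)$: by \cref{rk:Prop_of_two_sided}.\ref{rk:Prop_of_two_sideed:Eq=local}, $\Wcal_L$ and $\Wcal_R$ are the classes inverted by the left and right localization functors respectively, and naturally isomorphic functors invert the same arrows. The ``in particular'' clause is immediate from \cref{rk:Prop_of_two_sided}.\ref{rk:Prop_of_two_sideed:comp_Two_Loc}, which states that the natural transformation is componentwise an isomorphism at objects that are either fibrant or cofibrant, so (6) holds as soon as every object of $\Ccal$ is of this form. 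The main obstacle will be $(4) \Rightarrow (5)$, where one must carefully place every auxiliary arrow in $\Wcal_L$; this ultimately hinges on bi-saturation, which identifies anodyne fibrations with acyclic fibrations and so, via the left-semi-model structure, places them in $\Wcal_L$.
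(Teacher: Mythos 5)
Your overall architecture (a cycle through the six conditions plus self-duality of (1), (2), (5), (6)) is reasonable, and the steps $(1)\Leftrightarrow(2)$, $(1)\Rightarrow(3)$, $(3)\Rightarrow(4)$, $(5)\Rightarrow(6)$, $(6)\Rightarrow(1)$ and the ``in particular'' clause all check out and essentially match the paper. The problem is the step $(4)\Rightarrow(5)$, which is exactly where the real content of the proposition lives, and there your argument has a genuine gap.

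You factor the $\Wcal_L$-map $X \to X_2$ as an anodyne cofibration $X \overset{\ano}{\hookrightarrow} X^{fib}$ followed by a fibration, and then assert that the fibration $X^{fib} \to X_2$ is an \emph{acyclic} fibration. Nothing you have at that point justifies this. The (anodyne cofibration, fibration) factorization only produces a fibration; for it to be acyclic (equivalently, by $2$-out-of-$3$, for $X \overset{\ano}{\hookrightarrow} X^{fib}$ to lie in $\Wcal_L$) is precisely an instance of condition (3), which you are not entitled to assume when proving $(4)\Rightarrow(5)$. This is not a removable formality: an anodyne cofibration whose domain is neither fibrant nor cofibrant need \emph{not} be in $\Wcal_L$ — see \cref{ex:non_Quillen_Loc}, where $b \to d$ is an anodyne cofibration that is a right equivalence but not a left equivalence. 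Your closing remark that the step ``hinges on bi-saturation'' does not help either: bi-saturation identifies acyclic with anodyne maps, but the issue is that the fibration $X^{fib}\to X_2$ is not known to be either. The paper closes exactly this gap with a lifting argument (Quillen's path-object argument): it factors the comparison map between the two fibrant objects supplied by condition (4) as an anodyne cofibration followed by a fibration, observes that this anodyne cofibration is \emph{between fibrant objects} and hence in $\Wcal_L$ by \cref{rk:Prop_of_two_sided}, lifts the original anodyne cofibration against the resulting fibration, and concludes by $2$-out-of-$6$ for $\Wcal_L$. Some argument of this kind — actually using the left lifting property of the anodyne cofibration against a fibration with fibrant target — is unavoidable, and your proposal omits it entirely.
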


\begin{proof}We first show that all conditions except \ref{prop:QuillenCharac:FibRep} are equivalent:

\begin{itemize}
\item \ref{prop:QuillenCharac:trivCof_are_in_L} implies \ref{prop:QuillenCharac:Replace_are_compatible} because it implies that both $X^{cof} \overset{\ano}{\twoheadrightarrow} X$ and $X \overset{\ano}{\hookrightarrow} X^{fib}$ are in $\Wcal_L$, and hence that the composed map is in $\Wcal_L$, which implies it is an equivalence by \cref{rk:Prop_of_two_sided}.\ref{rk:Prop_of_two_sided:Core_eq}. 
\item \ref{prop:QuillenCharac:Replace_are_compatible} implies \ref{prop:QuillenCharac:loc_are_equiv} because of \cref{rk:Prop_of_two_sided}.\ref{rk:Prop_of_two_sideed:comp_Two_Loc}.
\item \ref{prop:QuillenCharac:loc_are_equiv} implies \ref{prop:QuillenCharac:R=L} because of \cref{rk:Prop_of_two_sided}.\ref{rk:Prop_of_two_sideed:Eq=local}.
\item \ref{prop:QuillenCharac:R=L} implies \ref{prop:QuillenCharac:L_is_Quillen} which in turn implies \ref{prop:QuillenCharac:trivCof_are_in_L} essentially by definition of Quillen model categories (and \cref{rk:Prop_of_two_sided}.\ref{rk:Prop_of_two_sided:Core_eq} and \ref{rk:Prop_of_two_sided:2outof6}).
\end{itemize}
As several of these conditions are clearly self dual, they are also all equivalent to their duals. Finally, Condition \ref{prop:QuillenCharac:trivCof_are_in_L} implies Condition \ref{prop:QuillenCharac:FibRep} simply by using the (anodyne cofibration,fibration) factorization to get the fibrant replacement. The last implication is essentially (a part of) Quillen's path object argument for existence of transfer: we assume Condition~\ref{prop:QuillenCharac:FibRep}, let $A \overset{\ano}{\hookrightarrow} B$ be an anodyne cofibration and choose a fibrant replacement according to Condition \ref{prop:QuillenCharac:FibRep}:

\[
\begin{tikzcd}
A \ar[d,hook,"\ano"] \ar[r,"\in \Wcal_L"] & A' \ar[d,"r"] \\
B \ar[r,"\in \Wcal_L"]   & B' \\
\end{tikzcd}
\]

Then we factor $r$ as an anodyne cofibration followed by a fibration. As the anodyne cofibration (usually only in $\Wcal_R$) is between fibrant objects it is in $\Wcal_L$, hence one obtains a solid diagram:

\[
\begin{tikzcd}
A \ar[d,hook,"\ano"] \ar[r,"\in \Wcal_L"] & A'' \ar[d,->>] \\
B \ar[r,"\in \Wcal_L"] \ar[ur,dotted]   & B' \\
\end{tikzcd}
\]

and a dotted filling. By $2$-out-of-$6$ for $\Wcal_L$ this implies that all maps involved are in $\Wcal_L$ and hence that our anodyne cofibration is indeed in $\Wcal_L$.

\end{proof}

\begin{example}\label{ex:non_Quillen_Loc} We will see in \cref{sec:Bousfield} that if $\Ccal$ is a combinatorial Quillen (or even two-sided) model category then its saturated left and right Bousfield localization $\Lb_S \Ccal$ and $\R_S \Ccal$ are always two-sided model categories. The usual theorem only asserts that these are Quillen model categories if $\Ccal$ is left (resp. right) proper. So any example of a combinatorial Quillen model category whose Bousfield localization is not a Quillen model category (so, does not exist in the usual sense) will provide an example of a two-sided semi-model category that is not a Quillen model category, and hence  do not satisfy the equivalent condition of \cref{prop:QuillenCharac:loc_are_equiv}. 

We have learned from Reid Barton on Mathoverflow \cite{325390} a very nice such example that really allows one to see explicitly these two classes of left and right equivalences: let $\Ccal$ be the four object lattice $d \leqslant c,b \leqslant a$, seen as a category:

\[
\begin{tikzcd}
  a \ar[r] \ar[d] & b \ar[d] \\
c \ar[r] & d \\
\end{tikzcd}
\]

One first considers the Quillen model structure on $\Ccal$, where $a \rightarrow b$ is the only weak equivalence (other than the identities), all maps are fibrations and all maps except $a \rightarrow b$ are cofibrations. One checks that it is a Quillen model structure by verifying by hand that there are no non-trivial lifting problems to solve and that every map has the two required factorizations. However, it fails to be left proper as the square is a pushout along a cofibration, but $c \rightarrow d$ is not an equivalence. Its homotopy category is $a \rightarrow c \rightarrow d$ (indeed $b$ is the only object that is only fibrant and not bifibrant).

One then takes $\Lb_{a \rightarrow c} \Ccal$, the saturated left Bousfield localization of $\Ccal$ at $a \rightarrow c$ (see \cref{sec:Bousfield}). The fibrant objects are the ones which have the lifting property against the cofibration $a \rightarrow c$, i.e. $c$ and $d$, and the corresponding localization of $Ho(\Ccal)$ is equivalent to the category $c \rightarrow d$. The map $a \rightarrow c$ is an anodyne cofibration between cofibrant objects, so it is an equivalence in both the left and the right semi-model structure $\Lb_{a \rightarrow c} \Ccal$, hence $a$ is sent to $c$ in the homotopy category, but the status of $b$ is more subtle: the map $a \rightarrow b $ is an anodyne fibration in $\Ccal$, so it is still an anodyne fibration in $\Lb_{a \rightarrow c} \Ccal$, which makes it a left equivalence (but as $b$ is not fibrant, it is not automatically a right equivalence). On the other hand, the map $b \rightarrow d$ is an anodyne cofibration in $\Lb_{a \rightarrow c} \Ccal$ (it is a pushout of $a \rightarrow c$) so it is a right equivalence (but as $b$ is not cofibrant, not automatically a left equivalence). 

Also note that the composite of these two maps $a \rightarrow b \rightarrow d$ is not an equivalence. So in particular none of the two maps we considered above is in both classes (otherwise their composite would be an equivalence). Here, the left localization functor sends $b$ to $c$, while the right localization functor sends $b$ to $d$.

\end{example}

\begin{remark} In \cite{barton2020model}, R.~Barton has introduced a notion of ``Relaxed premodel category'' which is very similar to our definition. Barton's notion is defined in terms of existence of sufficient simplicial resoluion of cofibrant objects in the arrow category of both $\Ccal$ and $\Ccal^^op$.  It is easy to see, using the usual construction of such resolution in a Quillen model category, that every two-sided model category admit such resolution and hence is a relaxed premodel category. The converse does not quite holds as Barton's definition do not include saturation conditions, but it is also clear that any saturated relaxed premodel category is a two-sided model category as the strong cylinders and strong path objects can be obtained from the first level of the simplicial and cosimplicial resolution. 

However relaxed premodel categories do not corresponds to a completely ``unsaturated'' version of \cref{def:twosided} where one simply drop the saturation condition (the third point). Indeed the resolution required in the definition are formulated in terms of anodyne map, instead of acyclic, so for example every cofibrant object has a strong cylinder object whose legs are anodyne. It seems the reason Barton makes this choice is mostly for simplicity: it is easier to formulate and this automatically holds in the case of an enriched premodel category.
\end{remark}

\section{Generalized Cisinski-Olschok's theory}
\label{sec:Cisinki_Olschok}

In \cite{cisinski2002theories} and \cite{cisinski2006prefaisceaux}, D.-C.~Cisinski has shown how to construct in a systematic way Quillen model structures on toposes whose cofibrations are the monomorphisms. In \cite{olschok2011left}, M.~Olschok has given a partial generalization of this to locally presentable categories that are not necessarily toposes and to an arbitrary cofibrantly generated classes of cofibrations, under the assumption that there exists well behaved cylinder objects and that every object is cofibrant (both of the assumptions being automatically satisfied in the special case treated by Cisinski).

In \cite{henry2018weakmodel} we gave a version of Cisinski-Olschok theory for weak model categories, which we will recall (and rephrase) as \cref{th:weak_Quil_cylinder}. The main goal of this section is to show how this, combined with \cref{Th:recog_LMS,Th:recog_RMS}, allows one to recover Olschok's theorem and many generalizations of it. One can obtain various versions of the theorem depending on the saturation assumption one requires, or whether one uses \cref{Th:recog_LMS} or \cref{Th:recog_RMS}. The original version of Olschok's theorem corresponds to \cref{cor:Olschoktheorem}.

\begin{construction}\label{constr:Pushout_product_property} Let $\Ccal$ and $\Dcal$ be two premodel categories. Let $F,G\colon  \Ccal \rightrightarrows \Dcal$ two left adjoint functors and $\lambda\colon  F \rightarrow G$ be a natural transformation. 

For any arrow $v\colon  X \rightarrow Y$ in $\Ccal$ one denotes by $\lambda \corner{\otimes} v$ the corner-product arrow:

\[ F(Y) \coprod_{F(X)} G(X) \rightarrow G(Y) \]

The reader unfamiliar with the notion can consult the appendix of \cite{joyal2006quasi} for its basic properties.
\end{construction}

\begin{definition}\label{def:(triv)cof_homCat} let $\lambda\colon F \rightarrow G$ be a natural transformation between two left adjoint functors between premodel categories as above.

\begin{itemize}
\item $\lambda$ is said to be a cofibration if for all cofibration $i$, $\lambda \corner{\otimes} i$ is a cofibration and for all anodyne cofibration $j$, $\lambda \corner{\otimes} j$ is an anodyne cofibration.

\item  $\lambda$ is said to be an anodyne cofibration if for all cofibration $i$, $\lambda \corner{\otimes} i$ is an anodyne cofibrations.

\end{itemize}
\end{definition}

We remind the reader that to test whether $\lambda$ satisfies these properties it is enough to check it on the generating cofibrations and generating anodyne cofibrations.

\begin{example}
A left adjoint functor $F\colon  \Ccal \rightarrow \Dcal$ between premodel categories is ``cofibrant'' in the sense of \cref{def:(triv)cof_homCat} if and only if $F$ is left Quillen functor.
\end{example}

\begin{remark}
It can be shown that this notion of cofibrations and anodyne cofibration on the category $[\Ccal,\Dcal]$ of left adjoint functor between two combinatorial categories $\Ccal$ and $\Dcal$ form a combinatorial premodel structure on the category $[\Ccal,\Dcal]$. Moreover this combinatorial premodel structure on $[\Ccal,\Dcal]$ is the exponential object of a symmetric monoidal closed structure on the category of combinatorial premodel categories (and left Quillen functor between them) such that the map $F\colon  \Ccal \otimes \Dcal \rightarrow \Ecal$ corresponds to Quillen bi-functors. This is used much more seriously in \cite{barton2020model}, and we will also use it heavily in \cite{henry2020model}.
\end{remark}

\begin{remark}\label{rk:adjoint_trans_nat} The category of left adjoint functor from $\Ccal$ to $\Dcal$ is equivalent to the category of left adjoint functors from $\Dcal^\op$ to $\Ccal^\op$. The equivalence sends a functor $F:\Ccal \to \Dcal$ to the opposite of its right adjoint $F_*^\op : \Dcal^\op \to \Ccal^\op $, a natural transformation $\lambda: F \to G $ corresponds to a natural transformation $\lambda_*: G_* \to F_*$, but as we work with functor valued in $\Ccal^{\op}$, this is a natural transformation $\lambda: F_*^{\op} \to G_*^\op$, making the equivalence above covariant. We have:\end{remark}

\begin{lemma}\label{lem:adjoint_of_cofibration} A natural transformation $\lambda: F \to G$ between two left adjoint functors $\Ccal \to \Dcal$ as above is an (anodyne) cofibration in the sense of \cref{def:(triv)cof_homCat} if and only if the corresponding natural transformation $\lambda_*^\op : F_*^\op \to G_*^\op$ is an (anodyne) cofibration.
\end{lemma}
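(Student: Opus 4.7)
The plan is to use Joyal's pushout-product / pullback-hom adjunction to reduce both sides of the equivalence to the same conditions on the corner-hom map $\langle \lambda_*, p\rangle \colon G_*(X) \to F_*(X) \times_{F_*(Y)} G_*(Y)$ associated to each $p \colon X \to Y$ in $\Dcal$. As a preliminary one should observe that the opposite of a premodel category is itself a premodel category, with the cofibrations (resp.\ anodyne cofibrations) of $\Ccal^\op$ being the opposites of the fibrations (resp.\ anodyne fibrations) of $\Ccal$; the required inclusion ``anodyne cofibrations $\subset$ cofibrations'' in $\Ccal^\op$ translates to ``anodyne fibrations $\subset$ fibrations'' in $\Ccal$, which is exactly the premodel axiom. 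This makes the right-hand side of the equivalence well-defined.

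First I would apply Joyal's adjunction (see the appendix of \cite{joyal2006quasi}): for any morphism $i$ of $\Ccal$ and any morphism $p$ of $\Dcal$, one has $\lambda \corner{\otimes} i \perp p$ if and only if $i \perp \langle \lambda_*, p\rangle$. Unpacking the two clauses in ``$\lambda$ is a cofibration'' and using that a map of $\Dcal$ is an anodyne fibration (resp.\ a fibration) exactly when it has the right lifting property against all cofibrations (resp.\ all anodyne cofibrations), Joyal's adjunction turns these clauses into the two conditions: $\langle \lambda_*, p\rangle$ is an anodyne fibration of $\Ccal$ for every anodyne fibration $p$ of $\Dcal$, and $\langle \lambda_*, q\rangle$ is a fibration of $\Ccal$ for every fibration $q$ of $\Dcal$.

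Next I would compute the corner-pushout in the opposite categories: for $u$ in $\Dcal^\op$ corresponding to $\tilde u \colon X \to Y$ in $\Dcal$, the morphism $\lambda_*^\op \corner{\otimes} u$ of $\Ccal^\op$ is exactly the corner-hom $\langle \lambda_*, \tilde u\rangle$ of $\Ccal$ viewed in $\Ccal^\op$, since pushouts in $\Ccal^\op$ are pullbacks in $\Ccal$. Using the dictionary (cofibrations of $\Dcal^\op$ = fibrations of $\Dcal$; cofibrations of $\Ccal^\op$ = fibrations of $\Ccal$, and analogously for the anodyne versions), the condition ``$\lambda_*^\op$ is a cofibration'' unfolds to precisely the same two conditions as above, giving the cofibration equivalence. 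The anodyne cofibration case is treated in the same way: both sides reduce, after the same two translations, to the single condition that $\langle \lambda_*, q\rangle$ is an anodyne fibration of $\Ccal$ for every fibration $q$ of $\Dcal$.

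The main obstacle is pure bookkeeping: one has to keep straight, in the opposite premodel structure, which class of maps corresponds to which (so that the swaps cofibration $\leftrightarrow$ fibration and anodyne cofibration $\leftrightarrow$ anodyne fibration are applied correctly), and one has to check that the corner-pushout in $\Ccal^\op$ does coincide, up to reversal of direction of arrows, with the corner-hom in $\Ccal$. Once this dictionary is set up, both directions of the equivalence drop out of a single application of Joyal's adjunction.
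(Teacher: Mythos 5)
Your proposal is correct and is exactly the argument the paper intends: the lemma is stated without proof, as an immediate consequence of the standard adjunction between the corner-product $\lambda \corner{\otimes} i$ and the corner-hom $\langle \lambda_*, p\rangle$, and your unfolding of the two clauses of \cref{def:(triv)cof_homCat} matches the explicit reformulation the paper gives right after the lemma (note the ``fibration $f\colon X \to Y$ in $\Ccal$'' there is a typo for $\Dcal$, as your setup correctly has it). Nothing is missing.
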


Here we use the opposite premodel structure on $\Dcal^\op$ and $\Ccal^\op$ whose cofibrations are the fibrations of $\Dcal$ and $\Ccal$. Explicitly, the lemma says for example that if $\lambda$ is a cofibration in the sense of \cref{def:(triv)cof_homCat}, then for each fibration $f:X \to Y$ in in $\Ccal$ the map:

\[ G_*(X) \to G_*(Y) \times_{F_*(Y)} F_*(X) \]

is a fibration, and an anodyne fibration when $f$ is an anodyne fibration.

\begin{definition}\label{def:WeakQuillenCylinder}

A \emph{weak Quillen cylinder} on a premodel category $\Ccal$ is a pair of left adjoint functors $I,D \colon  \Ccal \rightarrow \Ccal$ endowed with a diagram of natural transformations:

\[
\begin{tikzcd}
  \id_{\Ccal} \coprod \id_{\Ccal} \ar[r,hook,"i"] \ar[d,"\nabla"] & I \ar[d,"e"] \\
 \id_{\Ccal} \ar[r,hook,"\ano", "j"{swap}] & D \\ 
\end{tikzcd}
\]

where $\nabla$ denotes the codiagonal map and $i$ is a cofibration, $j$ is an anodyne cofibration and the composite of $i$ with the first coproduct inclusion $\id_{\Ccal} \rightarrow I$ is an anodyne cofibration (all in the sense of \cref{def:(triv)cof_homCat}).

\end{definition}

So, roughly, it is just a weak cylinder object for the identity functor in the category of endofunctors (except $i$ is required to be an anodyne cofibration and not just an acyclic cofibration). 

\begin{remark}\label{rk:C^op_has_a_Qcyl} By \cref{rk:adjoint_trans_nat,lem:adjoint_of_cofibration}, if $I,D:\Ccal \to \Ccal$ form a weak Quillen cylinder on $\Ccal$ then $I_*^\op$ and $D_*^\op$ form a weak Quillen cylinder on $\Ccal^\op$.
\end{remark}

\begin{lemma}\label{lem:pushout_comp_are_cof} In a premodel category, given a diagram:

\[ \begin{tikzcd}
 B \ar[d] & \ar[l] A \ar[d] \ar[r] & C \ar[d] \\
 B' & \ar[l] A' \ar[r] & C' \\
\end{tikzcd} \]

If the maps $C \to C'$ and $B \coprod_A A' \to B'$ are cofibrations (resp. anodyne cofibrations) then the comparison map:

\[ B \coprod_A C \to B' \coprod_{A'} C' \]
  
is a cofibration (resp. anodyne cofibration).
\end{lemma}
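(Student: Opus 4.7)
The plan is to factor the comparison map $B \coprod_A C \to B' \coprod_{A'} C'$ as a composite of two maps, each of which is manifestly a pushout of one of the two hypothesized (anodyne) cofibrations, and then invoke closure of each of the left classes of the two weak factorization systems under pushouts and compositions.

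First, I would introduce the intermediate object $B \coprod_A C'$. The map $B \coprod_A C \to B \coprod_A C'$ sits in the pushout square obtained by pushing the arrow $C \to C'$ out along $C \to B \coprod_A C$: concretely, $B \coprod_A C'$ is the pushout of $B \coprod_A C \leftarrow C \to C'$, so this first leg is a pushout of $C \to C'$, hence an (anodyne) cofibration by hypothesis on $C \to C'$ and by stability of the left class of a weak factorization system under pushouts.

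Next, I would use associativity of pushouts to rewrite $B \coprod_A C' \cong (B \coprod_A A') \coprod_{A'} C'$. The second leg $B \coprod_A C' \to B' \coprod_{A'} C'$ then sits in the pushout square obtained by pushing $B \coprod_A A' \to B'$ out along $B \coprod_A A' \to (B \coprod_A A') \coprod_{A'} C'$, so it is a pushout of $B \coprod_A A' \to B'$, and hence again an (anodyne) cofibration.

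Finally, since cofibrations (resp. anodyne cofibrations) are closed under composition, the composite $B \coprod_A C \to B \coprod_A C' \to B' \coprod_{A'} C'$ is a cofibration (resp. anodyne cofibration); a direct check that this composite equals the canonical comparison map of pushouts completes the argument. There is no real obstacle here: the only subtlety is getting the associativity-of-pushouts identification $(B \coprod_A A') \coprod_{A'} C' \cong B \coprod_A C'$ right so that the two pushout squares glue into the correct factorization.
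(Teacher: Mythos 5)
Your proposal is correct and follows essentially the same route as the paper: both factor the comparison map through the intermediate object $B \coprod_A C'$, identify the first leg as a pushout of $C \to C'$ and the second as a pushout of $B \coprod_A A' \to B'$ (the paper via the pasting law for pushout squares, you via the equivalent associativity identification $B \coprod_A C' \cong (B \coprod_A A') \coprod_{A'} C'$), and conclude by stability of the left class under pushouts and composition.
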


\begin{proof} This map is a composite:

\[ B \coprod_A C \to B \coprod_A C' \to  B' \coprod_{A'} C' \]
We claim that the first map is a pushout of $C \to C'$ and the second is a pushout of $B \coprod_A A' \to B'$. Indeed consider the two diagrams:

\[
\begin{tikzcd}
  A \ar[d] \ar[r] & C \ar[d] \ar[r] & C' \ar[d] &  A' \ar[d] \ar[r] & \displaystyle B \coprod_A A' \ar[r] \ar[d] & B' \ar[d] \\ 
B \ar[r] & \displaystyle B \coprod_A C \ar[r] & B \displaystyle \coprod_A C'&  C' \ar[r] &\displaystyle B \coprod_A C' \ar[r] &\displaystyle B' \coprod_{A'} C' \\
\end{tikzcd}
\]

In both cases the square on the left and the outside rectangle are pushouts.  Hence it follows that the right square is also a pushout. For the left square of the second diagram, it is a pushout by the same argument as for the map $B \coprod_A C \to B \coprod_A C'$. \end{proof}

The results of Section $3$ of \cite{henry2018weakmodel} can be rephrased (in slightly less generality) as:

\begin{theorem}\label{th:weak_Quil_cylinder}
If a premodel category admits a weak Quillen cylinder, then it is a weak model category.
\end{theorem}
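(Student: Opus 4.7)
My plan is to reduce the theorem via \cref{rk:def_WMS} to verifying both the cylinder axiom and the path object axiom for $\Ccal$. The key observation is \cref{rk:C^op_has_a_Qcyl}: a weak Quillen cylinder on $\Ccal$ yields one on $\Ccal^{\op}$, so the path object axiom for $\Ccal$ follows from the cylinder axiom for $\Ccal^{\op}$ by duality. Hence it suffices to focus entirely on constructing, for every core cofibration $v\colon A \hookrightarrow B$ (with $A$, and hence $B$, cofibrant), a relative weak cylinder object as in \cref{def:weak_cylinder}.

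For the construction, I would exploit that commutativity of the defining square of the weak Quillen cylinder forces $e_A \circ \iota_1(A) = j_A = e_A \circ \iota_2(A)$, so that both endpoint inclusions of $A$ into $I(A)$ become equal after applying $e_A\colon I(A) \to D(A)$. This suggests setting
$$I_A B := (B \coprod B) \coprod_{A \coprod A} D(A), \qquad D_A B := D(B),$$
where $A \coprod A \to D(A)$ is the common composite $j_A \circ \nabla$. Since this factors through the codiagonal, $I_A B$ rewrites as $(B \coprod_A B) \coprod_A D(A)$, exhibiting the structural map $B \coprod_A B \hookrightarrow I_A B$ as a pushout of $j_A$ along the cofibration $A \to B \coprod_A B$, hence as an anodyne cofibration by \cref{lem:pushout_comp_are_cof}. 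Naturality of $j$ assembles a map $I_A B \to D(B)$ from $j_B$ on each copy of $B$ and $D(v)$ on $D(A)$, and the resulting map $B \to D_A B$ is $j_B$, an anodyne (hence acyclic) cofibration.

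The main obstacle is showing that the composite $B \to B \coprod_A B \to I_A B$ through the first coproduct inclusion is an acyclic cofibration: its naive decomposition is a pushout of $v$ (merely a cofibration) followed by an anodyne cofibration, yielding only a cofibration. To extract acyclicity I would reduce via \cref{constr:rel_weak_strong_cylinder} to the case when $B$ is fibrant, replacing $B$ by a fibrant replacement if necessary. In that case the anodyne cofibration $j_A\colon A \to D(A)$ lifts $v$ to a map $\tilde v\colon D(A) \to B$ extending $v$, which together with the codiagonal $B \coprod_A B \to B$ produces a map $I_A B \to B$ factoring the codiagonal as $B \coprod_A B \hookrightarrow I_A B \to B$. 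I would then use the corner product cofibration $i \corner{\otimes} v\colon (B \coprod B) \coprod_{A \coprod A} I(A) \to I(B)$ and the anodyne $\iota_1(B)\colon B \to I(B)$, combined with the pushout along $e_A$, to exhibit $I_A B \to B$ as a (core) acyclic fibration in a way that makes $B \to I_A B$ appear as the acyclic-cofibration part of a strong relative cylinder in the sense of \cref{Rk:WMS_onlydepends_on_core}. The retract lemma applied to this factorization then forces $B \to I_A B$ to be acyclic. With the cylinder axiom established for $\Ccal$, the dual argument applied to $\Ccal^{\op}$ yields the path object axiom, and \cref{rk:def_WMS} concludes that $\Ccal$ is a weak model category.
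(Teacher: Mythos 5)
Your overall strategy matches the paper's: reduce to the cylinder axiom, handle the path object axiom by applying the cylinder argument to $\Ccal^{\op}$ via \cref{rk:C^op_has_a_Qcyl}, and verify the pieces with \cref{lem:pushout_comp_are_cof}. But there is a genuine gap: the object you construct is not a relative weak cylinder object. Setting $I_A B := (B \coprod B) \coprod_{A \coprod A} D(A)$ never applies the cylinder functor $I$ to $B$; as you yourself compute, this object is $(B \coprod_A B) \coprod_A D(A)$, so the two copies of $B$ are glued only over (a thickening of) $A$. The first inclusion $B \to I_A B$ is then a pushout of the mere cofibration $A \to B \coprod_A D(A)$ and cannot be acyclic in general. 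Concretely, for $v \colon \emptyset \hookrightarrow B$ (say in simplicial sets with $I(X) = X \times \Delta^1$ and $D = \id$) your object is $B \coprod B$, and $B \to B \coprod B$ is not an equivalence. Consequently the repair you sketch at the end — reducing to $B$ fibrant and exhibiting $I_A B \to B$ as an acyclic fibration so that the retract lemma forces acyclicity of $B \to I_A B$ — cannot be completed, because the statement it would establish is false for your $I_A B$ (in the example, $I_\emptyset B \to B$ is the fold map $B \coprod B \to B$, which has no lift against $\partial\Delta^1 \to \Delta^1$ when $B = \Delta^0$).

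The paper's construction takes instead $I_A B := I B \coprod_{I A} D A$, the pushout of $IB \leftarrow IA \xrightarrow{e_A} DA$ (with $D_A B = DB$ as in your proposal). This is exactly the object you would obtain by pushing your $(B\coprod B)\coprod_{A\coprod A} I(A) \to I(B)$ corner map out along $e_A$ — i.e., the ingredient you mention in your last paragraph but never incorporate into the definition of $I_A B$. With this object, \cref{lem:pushout_comp_are_cof} applied to the diagram with rows $B \leftarrow A \to A$ and $IB \leftarrow IA \to DA$ shows that the first inclusion $B = B \coprod_A A \to IB \coprod_{IA} DA$ is \emph{anodyne}, the key input being that $B \coprod_A IA \to IB$ is $i_1 \corner{\otimes} v$ where $i_1$ (the composite of $i$ with the first coproduct inclusion $\id \to I$) is an anodyne cofibration by the definition of a weak Quillen cylinder. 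No fibrancy of $B$ or retract argument is needed. Your verifications of the other conditions (that $B \coprod_A B \to I_A B$ is a cofibration and $B \to DB$ is anodyne) are in the right spirit but must be redone for the correct object.
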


This was proved in \cite{henry2018weakmodel}, though the language of the present paper, and the simplified version of the statement allows us to give a simplified version of the proof:

\begin{proof} 
Let $\Ccal$ be a premodel category with a weak Quillen cylinder $(I,D,i,j,e)$ as in \cref{def:WeakQuillenCylinder}. Let $f\colon  A \hookrightarrow B$ be any core cofibration, we form the diagram:

\[\begin{tikzcd}
 \displaystyle  B \coprod_A B \ar[r,hook] \ar[d,"\nabla"] & \displaystyle IB \coprod_{IA} DA \ar[d] \\
B \ar[r,hook,"\ano"] & DB \\ 
\end{tikzcd} \]

We claim that it is a ``relative weak cylinder object'' for $f$. Indeed the map $B \overset{\ano}{\hookrightarrow} DB$ is $j \corner{\otimes} (\emptyset \to B)$ and hence an anodyne cofibration. The map $B \coprod_A B \hookrightarrow  IB \coprod_{IA} D$ is the comparison map between the pushouts of the rows of:

\[ \begin{tikzcd}
  B \coprod B \ar[d,hook] & \ar[l,hook] A \coprod A \ar[d,hook] \ar[r] & A \ar[d,hook] \\
IB & IA \ar[l,hook] \ar[r] & DA \\
\end{tikzcd} \]

Hence it is a cofibration by \cref{lem:pushout_comp_are_cof} as $A \hookrightarrow DA$ is an (anodyne) cofibration and the map $(B \coprod B) \coprod_{A \coprod A} IA \hookrightarrow IB$ is $i \corner{\otimes} (A \hookrightarrow B)$ and hence is a cofibration.

Similarly, the map $B \hookrightarrow  IB \coprod_{IA} D$ is an anodyne cofibration by \cref{lem:pushout_comp_are_cof} applied to:

\[ \begin{tikzcd}
  B \ar[d,hook] & \ar[l,hook] A \ar[d,hook] \ar[r] & A \ar[d,hook] \\
IB & IA \ar[l,hook] \ar[r] & DA \\
\end{tikzcd} \]

Dually, by \cref{rk:C^op_has_a_Qcyl}, the premodel category $\Ccal^\op$ also admits a weak Quillen cylinder, hence it also admits relative weak cylinder objects, which means that $\Ccal$ has relative weak path objects, and hence that $\Ccal$ is a weak model category.

\end{proof}

Cisinski-Olschok theory is concerned instead with combinatorial premodel categories endowed with what we will call a strong Quillen cylinder, i.e. a weak Quillen functor where the map $j\colon \id_{\Ccal} \rightarrow D$ is an isomorphism. More explicitly:

\begin{definition}\label{def:Strong_Quillen_cylinder}
 A \emph{strong Quillen cylinder} on a premodel category $\Ccal$ is a left adjoint endofunctor $I \colon  \Ccal \rightarrow \Ccal$ endowed with natural transformations:

\[ \id_{\Ccal} \coprod \id_{\Ccal} \overset{i}{\hookrightarrow} I \rightarrow \id_{\Ccal} \]

factoring the codiagonal map such that $i$ is a cofibration and the composite $\id_{\Ccal} \rightarrow I$ of $i$ with the first coproduct inclusion is an anodyne cofibration, both in the sense of \cref{def:(triv)cof_homCat}.
\end{definition}

\begin{theorem}\label{th:strong_cylinder_Olschok} Let $\Ccal$ be a premodel category equipped with a strong Quillen cylinder. Then:

\begin{itemize}

\item Any core left saturated and right saturated premodel structure on the underlying category of $\Ccal$ that has the same left core as $\Ccal$ is a left semi-model category.

\item Any core right saturated and left saturated premodel structure on the underlying category of $\Ccal$ that has the same right core as $\Ccal$ is a right semi-model category.

\end{itemize}
\end{theorem}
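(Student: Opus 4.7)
The plan is to apply the recognition theorems \cref{Th:recog_LMS} and \cref{Th:recog_RMS} to $\Ccal'$, the given premodel structure, so that it suffices to exhibit (i) a weak model structure on $\Ccal'$, (ii) strong cylinder objects for cofibrant objects in the first case (resp.\ strong path objects for fibrant objects in the second), and (iii) the assumed core saturation condition. The remaining right (resp.\ left) saturation hypothesis then upgrades the resulting Fresse semi-model structure to a Spitzweck one via the moreover clauses of those theorems.

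First I would observe that any strong Quillen cylinder is, in particular, a weak Quillen cylinder (set $D = \id_\Ccal$ and $j = \id$), so by \cref{th:weak_Quil_cylinder} the premodel category $\Ccal$ is itself a weak model category. Since being a weak model category depends only on the left core (or, equivalently, on the right core), as noted after \cref{prop:Model_str_from_3from2}, the hypothesis that $\Ccal'$ shares the appropriate core with $\Ccal$ transfers this property to $\Ccal'$.

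For the cylinder objects in the first statement, I would note that if $X$ is cofibrant then $\emptyset \to X$ is a cofibration, so the corner products $i \corner{\otimes} (\emptyset \to X)$ and $j \corner{\otimes} (\emptyset \to X)$ are respectively a cofibration and an anodyne cofibration in $\Ccal$. Since $I$ preserves the initial object (as a left adjoint), these corner products simplify to the component maps $i_X \colon X \coprod X \to IX$ and $j_X \colon X \to IX$. The diagram $X \coprod X \hookrightarrow IX \to X$ thus factors the codiagonal, and since every map involved has cofibrant domain it lies in the shared left core, yielding a strong cylinder object for $X$ in $\Ccal'$ as well. Dually, for the path objects in the second statement, I would set $PX := I_* X$ for the right adjoint $I_*$ of $I$, and consider the factorization $X \xrightarrow{e_{*,X}} I_*X \xrightarrow{i_{*,X}} X \times X$ obtained by identifying $(\id \coprod \id)_*$ with $\id \times \id$. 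This factors the diagonal because $i_* \circ e_* = (e \circ i)_* = \nabla_*$ is precisely the diagonal natural transformation. By \cref{lem:adjoint_of_cofibration}, $i_*$ is a fibration in the dual functor sense and $j_*$ is an anodyne fibration in the dual functor sense. Applying these to the fibration $X \to 1$ (which is a fibration since $X$ is fibrant), the corresponding corner pullbacks are $I_*X \to I_*(1) \times_{1 \times 1} (X \times X)$ and $I_*X \to I_*(1) \times_1 X$; crucially, because $I_*$ is a right adjoint it preserves the terminal object, so $I_*(1) = 1$ and these collapse to the actual component maps $I_*X \to X \times X$ (a fibration) and $I_*X \to X$ (an anodyne fibration). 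Both maps have fibrant target, so they lie in the shared right core and produce a strong path object in $\Ccal'$.

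The main technical delicacy is the path object construction: one must carefully dualize the functor-level (anodyne) cofibration axioms through \cref{lem:adjoint_of_cofibration} and notice that the identity $I_*(1) = 1$ makes the relevant corner pullbacks degenerate to the desired component maps, rather than involving $I_*(1)$ as an extraneous factor. With the weak model structure, strong cylinder (resp.\ path) objects, and core saturation in hand, \cref{Th:recog_LMS} (resp.\ \cref{Th:recog_RMS}) immediately yields a Fresse left (resp.\ right) semi-model structure on $\Ccal'$, and the additional saturation hypothesis upgrades this to Spitzweck.
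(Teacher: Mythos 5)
Your proposal is correct and follows essentially the same route as the paper: apply \cref{th:weak_Quil_cylinder} to get the weak model structure, observe that $IX$ and $PX = I_*X$ give strong cylinder and path objects for cofibrant (resp.\ fibrant) objects, note that all of these are core conditions that transfer to any premodel structure sharing the relevant core, and conclude via \cref{Th:recog_LMS} and \cref{Th:recog_RMS}. The only difference is that you spell out the corner-product computations and the dualization through \cref{lem:adjoint_of_cofibration} (including the observation that $I(\emptyset)=\emptyset$ and $I_*(1)=1$), which the paper leaves implicit.
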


In particular, under the same assumptions, $\Lb \R \Ccal$ and $\R \Lb \Ccal$ are both left and right semi-model categories, i.e. they are two-sided model category in the sense of \cref{sec:Two_sided}. If one use Fresse's definition instead of Spitzweck's definition of left semi-model structure, than one can drop right saturation in the first point, and left saturation in the second point.

\begin{proof} \Cref{th:weak_Quil_cylinder} shows that $\Ccal$ is a weak model category. Moreover, if $I$ is the cylinder functor, then for any cofibrant object $X$, $IX$ is a strong cylinder object for $X$, and for $X$ fibrant and $P$ the right adjoint of $I$, $PX$ is a strong path object for $X$. Hence $\Ccal$ satisfies all the core part of the assumption of \cref{Th:recog_LMS,Th:recog_RMS}. Any category with the same right/left core as $\Ccal$ also satisfies these assumptions, hence any category with the same right/left core satisfying the appropriate saturation assumption is going to be a left or right semi-model category.
\end{proof}

\begin{notation}\label{nota:Connection_with_Olschok} The definitions and results in this section are formulated in a slightly different way to those in Cisinski's and Olschok's work. We introduce a few other concepts in order to clarify the connection:

\begin{itemize}

\item A \emph{structured category} $\Ccal$ is a complete and cocomplete category endowed with a weak factorization system (cofibration,anodyne fibration). It is said to be accessible (resp. combinatorial) if it is a locally presentable category and its weak factorization system is accessible (resp. cofibrantly generated).

\item Given a natural transformation $\lambda\colon F \rightarrow G$ between two left adjoint functors $F,G \colon  \Ccal \rightrightarrows \Dcal$ between two structured categories, one says that $\lambda$ is a \emph{cofibration} if $\lambda \corner{\otimes} i$ is a cofibration in $\Dcal$ for all cofibrations $i$ of $\Ccal$.

\item A \emph{strong pre-Quillen cylinder} on a structured category $\Ccal$ is a left adjoint endofunctor $C \colon  \Ccal \rightarrow \Ccal$ endowed with a natural transformation:

\[ \id_{\Ccal} \coprod \id_{\Ccal} \overset{\sigma}{\hookrightarrow} C \rightarrow \id_{\Ccal} \]

factoring the codiagonal map and such that $\sigma$ is a cofibration.

\end{itemize}
\end{notation}

\begin{construction}\label{constr:Oslchok_Lambda} Definition 3.5 of \cite{olschok2011left} can be reformulated as follows. Let $\Ccal$ be a combinatorial structured category equipped with a strong pre-Quillen cylinder:

\[ \id_{\Ccal} \coprod \id_\Ccal \overset{\sigma=(\sigma_0,\sigma_1)}{\hookrightarrow} C \rightarrow \id_\Ccal .\]

Moreover, let $I$ be a set of generating cofibrations in $\Ccal$, and $S$ any set of cofibration in $\Ccal$. We define $\Lambda(C,S,I)$ to be the smallest set of cofibrations in $\Ccal$ such that:

\begin{itemize}

\item $S \subset \Lambda(C,S,I)$.

\item $\forall i \in I$, $\sigma_0 \corner{\otimes} i \in \Lambda(C,S,I)$ and $\sigma_1 \corner{\otimes} i \in\footnote{This second condition is only here so that our definition match Definition 3.5 of \cite{olschok2011left}, otherwise it is enough to have it for $\sigma_0$ to obtain \cref{cor:Olschoktheorem}.} \Lambda(C,S,I)$.

\item if $j \in \Lambda(C,S,I)$ then $\sigma \corner{\otimes} j \in \Lambda(C,S,I)$.

\end{itemize}

Then $\Lambda(C,S,I)$ generates the (anodyne cofibration, fibration) weak factorization system of a premodel structure on $\Ccal$ which makes $C$ into a strong Quillen cylinder. In fact it generates the smallest class of anodyne cofibrations containing $S$ making this true.
\end{construction}

The connection with Olschok's terminology from \cite{olschok2011left} (closely connected to Cisinski's terminology in \cite{cisinski2006prefaisceaux}) is as follows: In \cite{olschok2011left} one restricts to combinatorial structured categories in which every object is cofibrant, what we have called a pre-Quillen cylinder is exactly what is called a cartesian cylinder in \cite{olschok2011left}. This terminology ``cartesian'' of \cite{olschok2011left} comes from \cite{cisinski2006prefaisceaux} where the cylinder considered are obtained using cartesian product by a fixed object in a topos, and whose key properties can be expressed in terms of certain cartesian square, which justify this terminology ``cartesian''. We prefer to avoid using this terminology in the more general situation where no cartesian structure plays a role. Finally the set $\Lambda(C,S,I)$ defined above is exactly as from Definition $3.5$ in \cite{olschok2011left}.

And one can indeed recover as a corollary of the results of this section, M.~Olschok's Theorem 3.16 of \cite{olschok2011left}.

\begin{cor}[Cisinki-Olschok theorem]\label{cor:Olschoktheorem} Let $\Ccal$ be a combinatorial structured category (see \cref{nota:Connection_with_Olschok}) in which every object is cofibrant. And let $C$ be a strong pre-Quillen cylinder on $\Ccal$ and $S$ any set of cofibration of $\Ccal$. Then $\Ccal$ admits a Quillen model structure such that:

\begin{itemize}
\item Its cofibrations are the cofibrations of $\Ccal$.

\item Its fibrant objects and fibrations between fibrant objects are characterized by the lifting property against the set $\Lambda(C,S,I)$ from \cref{constr:Oslchok_Lambda}, where $I$ is any generating set of cofibrations.

\item Equivalences are maps $f\colon X \rightarrow Y$ such that for all fibrant object $T$, $f$ induced a bijection on the set of homotopy classes $[Y,T] \rightarrow [X,T]$ where the homotopy relation is defined as maps $CX \rightarrow T$.

\end{itemize}

\end{cor}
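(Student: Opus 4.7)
The plan is to combine \cref{constr:Oslchok_Lambda}, the bi-saturation construction of \cref{th:main_saturation}, the recognition theorem \cref{th:strong_cylinder_Olschok}, and \cref{prop:QuillenCharac}. In outline: build a combinatorial premodel structure on $\Ccal$ in which $C$ is a strong Quillen cylinder; saturate on both sides to obtain a two-sided model category; and then observe that because every object of $\Ccal$ is cofibrant, this two-sided model structure is automatically a Quillen model structure.

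First I would apply \cref{constr:Oslchok_Lambda} to equip $\Ccal$ with the combinatorial premodel structure whose cofibrations are the given ones and whose anodyne cofibrations form the weak factorization system cofibrantly generated by $\Lambda(C,S,I)$. By the final clause of \cref{constr:Oslchok_Lambda}, $C$ is a strong Quillen cylinder on this premodel category. Next I would form the bi-saturation $\R \Lb \Ccal$: by \cref{th:main_saturation} this is a combinatorial, bi-saturated premodel category with the same underlying category and the same core as $\Ccal$, and in particular the same cofibrations, the same fibrant objects and the same fibrations with fibrant target. Because being a strong Quillen cylinder is a core property (only anodyne and core cofibrations are involved in its definition), $C$ remains a strong Quillen cylinder on $\R \Lb \Ccal$. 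Applying \cref{th:strong_cylinder_Olschok} in both its left and right forms then exhibits $\R \Lb \Ccal$ as simultaneously a left and a right semi-model category, i.e.\ a two-sided model category in the sense of \cref{def:twosided}. Since every object of $\Ccal$ is cofibrant, the last sentence of \cref{prop:QuillenCharac} forces $\Wcal_L = \Wcal_R$, so this two-sided model structure is in fact a Quillen model structure.

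It remains only to identify the three classes. The cofibrations are unchanged throughout the construction, so they are the cofibrations of the original structured category. Fibrant objects and fibrations between fibrant objects are core notions, hence invariant under the bi-saturation step, and in the premodel structure produced by \cref{constr:Oslchok_Lambda} they are by definition the maps with right lifting property against $\Lambda(C,S,I)$. For the weak equivalences: since every object is cofibrant, $CX$ is a cylinder object for every $X$ in the resulting weak model structure, so left homotopy classes of maps $X \to T$ with $T$ fibrant coincide with homotopy classes in the sense of the corollary; a standard Yoneda-style argument in the homotopy category then identifies the weak equivalences with exactly those $f\colon X \to Y$ inducing bijections $[Y,T] \to [X,T]$ for every fibrant $T$.

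The only real subtlety is keeping track of which notions are core and therefore invariant under saturation; once one trusts this, the result is a direct assembly of the recognition and saturation machinery established earlier in the paper. I do not expect any genuine obstacle beyond this bookkeeping, and in particular no non-trivial homotopical argument is needed here that is not already packaged in \cref{th:strong_cylinder_Olschok} and \cref{prop:QuillenCharac}.
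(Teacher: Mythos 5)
Your proposal is correct and rests on the same pillars as the paper's proof (\cref{constr:Oslchok_Lambda}, saturation, \cref{th:strong_cylinder_Olschok}), but it takes a slightly longer route at the end. The paper observes that since every object is cofibrant, the premodel structure produced by \cref{constr:Oslchok_Lambda} is \emph{already} right saturated (every acyclic fibration has cofibrant domain, hence is anodyne by \cref{lem:Naiv_when_fibrant_target}) and its left and core left saturations coincide; it therefore only needs to apply the first bullet of \cref{th:strong_cylinder_Olschok} to the left saturation, and then uses the elementary fact that a left semi-model category in which every object is cofibrant is a Quillen model category. You instead pass through the full bi-saturation $\R\Lb\Ccal$, the two-sided model category machinery, and \cref{prop:QuillenCharac}; this works, but it is heavier than necessary, and one justification along the way is inaccurate: being a strong Quillen cylinder is \emph{not} a core property, since \cref{def:(triv)cof_homCat} quantifies over all cofibrations and all anodyne cofibrations, both of which change under saturation ($\Lb$ enlarges the anodyne cofibrations, $\R$ shrinks the cofibrations). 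Fortunately you do not actually need $C$ to remain a strong Quillen cylinder on $\R\Lb\Ccal$: \cref{th:strong_cylinder_Olschok} is deliberately phrased so that it applies to \emph{any} suitably saturated premodel structure with the same left (resp.\ right) core as the original $\Ccal$ carrying the cylinder, and the paper explicitly records that $\R\Lb\Ccal$ is a two-sided model category under these hypotheses. Your identification of the three classes, including the preservation of cofibrant objects and of core fibrations under saturation and the description of the weak equivalences via $[Y,T]\rightarrow[X,T]$, matches the paper's.
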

\begin{proof} $\Ccal$ with $I$ and $\Lambda(C,S,I)$ as generating cofibrations and anodyne cofibrations is a combinatorial category with a strong Quillen cylinder given by $C$. As every object of $\Ccal$ is cofibrant, it is right saturated and its left saturation and core left saturation coincide. Hence by \cref{th:strong_cylinder_Olschok} its (core) left saturation is a left semi-model category, and as every object is cofibrant it is actually a Quillen model category. The first two points follows immediately from the construction. The last point is just a consequence of the fact that as every object is cofibrant, then for any fibrant object $T$, the homotopy class of maps from $X$ to $T$ defined using the cylinder $CX$ are indeed the morphisms in the homotopy category, and hence one can test whether a map is an equivalence by using these.
\end{proof}

\begin{remark} Given $\Ccal$ an accessible structured category equipped with a pre-Quillen cylinder, it is possible (though we will not do it here), to construct a smallest premodel structure on $\Ccal$ making it into a Quillen cylinder (as it is achieved by \cref{constr:Oslchok_Lambda} in the combinatorial case). We will not detail this further, but using such a construction (instead of \cref{constr:Oslchok_Lambda}) one can also give a version of \cref{cor:Olschoktheorem} for an accessible structured category.\end{remark}

\begin{remark} We also obtain a dual form of Olschok's theorem: if $\Ccal$ is a combinatorial or accessible premodel category with a strong Quillen cylinder and where every object of $\Ccal$ is fibrant then it admits a Quillen model structure with the same fibrations and the same core (acyclic) cofibrations. Indeed, as every object is fibrant, $\Ccal$ is left saturated so that $\R \Ccal$ is bi-saturated and has the same fibrations and core cofibrations as $\Ccal$. Therefore by \cref{th:strong_cylinder_Olschok} it is a right semi-model structure and hence, as every object, is fibrant it is a full Quillen model structure.
\end{remark}

\begin{example}
An example where this dual Cisinki-Olschok's theorem would have been useful is the construction of the ``Folk'' model structure on the category of strict $\infty$-category in \cite{lafont2010folk}. They explicitly construct a path object functor $X \mapsto \Gamma(X)$ in section $4.4$ which can be easily checked to be a right adjoint functor (for example it is $\omega_1$-accessible and preserves all limits),  Theorem 2 of \cite{lafont2010folk} equips it with all the natural transformations to make it a ``path object functor'' and subsequent results, e.g. Corollary $2$, in \cite{lafont2010folk} shows that it is the right adjoint of a strong Quillen cylinder. As every object is fibrant in this model structure this directly concludes the proof that the folk model structure exists and is a Quillen model structure.
\end{example}

\begin{remark}\label{rk:simplicial} Another striking consequences of these results is that any premodel category that is enriched over a monoidal model category is immediately a weak model category, and if it satisfies some of the saturation condition of \cref{def:saturation}, it will be a left/right/two-sided model category. For example, any bi-saturated simplicial premodel category is automatically a two-sided model category.  Of course, by ``enriched over a monoidal model category'' or ``simplicial premodel category'' one means that the usual compatibility conditions between the enrichment and weak factorization system familiar to the theory of enriched (or simplicial) model categories are satisfied.

A further corollary of this is that the category of left (resp. right, resp. two-sided) simplicial combinatorial (or even accessible) model categories has all limits and colimits. Indeed limits and colimits of combinatorial and accessible premodel categories exists and are discussed in \cref{sec:apendix_awfs}, and this easily caries over to completeness and cocompleteness of the category of simplicial accessible (or combinatorial) premodel categories. As (core) left and right saturated categories form  reflective and coreflective subcategories in a compatible way (by \cref{th:main_saturation}) it immediately follows that the full subcategories of categories satisfying some given saturation condition automatically have limits and colimits, hence the result follows from the observation above.
\end{remark}

\section{Left and right Bousfield localizations}
\label{sec:Bousfield}

Before talking about Bousfield localization, we introduce a notation and a lemma that will be useful for both left and right Bousfield localizations:

\begin{notation} Given $i\colon A \hookrightarrow B$ a core cofibration in a weak model category $\Ccal$, we denote by $\nabla i$ the core cofibration:

\[ B \coprod_A B \overset{\nabla i}{\hookrightarrow} I_A B\]

for some choice of a relative weak cylinder object $I_A B$ for $i$. $\nabla i$ is essentially a core cofibration representing the homotopy codiagonal map of $i$. As $\nabla i$ is itself a core cofibration one can iterate the construction to obtain $\nabla^k i$ the ``homotopy higher codiagonal'' of $i$.
\end{notation}

\begin{lemma}\label{lem:nabla_lift} Let $p$ and $q$ be two composable core fibrations in a weak model category $\Ccal$ and let $i$ be a core cofibration. Assume that $q \circ p$ has the right lifting property against $i$ and $q$ has the right lifting property against $\nabla i$, then $p$ has the right lifting property against $i$.
\end{lemma}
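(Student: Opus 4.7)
The plan is to start from a commutative square
\[
\begin{tikzcd}
A \ar[r,"a"] \ar[d,hook,"i"{swap}] & X \ar[d,"p"] \\
B \ar[r,"b"{swap}] & Y
\end{tikzcd}
\]
and produce a diagonal filler $h \colon B \to X$. Note first that $X$, $Y$, and $Z$ are all fibrant, since a composite of fibrations is a fibration and $q$ has fibrant target. My first step is to apply the given right lifting property of $q\circ p$ against $i$ to the composite square with $qb\colon B\to Z$ at the bottom. This yields an auxiliary map $l\colon B\to X$ with $li=a$ and $qpl=qb$, solving the problem "up to $q$". The remaining task is to improve $l$ so that $pl=b$ (while keeping $li=a$); morally, $pl$ and $b$ are two maps into $Y$ that agree on $A$ and become equal after applying $q$, and I want to use the hypothesis on $\nabla i$ to produce a homotopy between them over $A$, and then lift that homotopy along $p$.

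Concretely, choose a relative weak cylinder object $B\coprod_A B \overset{\nabla i}{\hookrightarrow} I_A B \to D_A B$ for $i$, with $B\overset{\sim}{\hookrightarrow} D_A B$ acyclic and with first inclusion $s_0\colon B\overset{\sim}{\hookrightarrow} I_A B$ an acyclic cofibration (as required by \cref{def:weak_cylinder}). The pair $(pl,b)$ defines a map $[pl,b]\colon B\coprod_A B\to Y$ since $pli=pa=bi$. To set up a square with $\nabla i$ on the left and $q$ on the right, I need a bottom map $I_AB\to Z$; for this I first extend $qb\colon B\to Z$ to $\tilde b\colon D_AB\to Z$, which is possible because $B\hookrightarrow D_AB$ is an acyclic cofibration and $Z$ is fibrant, so has the left lifting property against $Z\to 1$. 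Composing with $I_AB\to D_AB$ yields the required bottom map, and commutativity of the resulting square reduces, via the defining square of the weak cylinder, to the identity $q\circ[pl,b]=[qpl,qb]=qb\circ\nabla$ on $B\coprod_A B$, which holds because $qpl=qb$.

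Applying the hypothesis that $q$ has the right lifting property against $\nabla i$, I obtain a diagonal $H\colon I_AB\to Y$ with $Hs_0=pl$ and $Hs_1=b$. Now I lift $H$ through $p$: the square
\[
\begin{tikzcd}
B \ar[r,"l"] \ar[d,hook,"s_0"{swap},"\sim"] & X \ar[d,"p"] \\
I_A B \ar[r,"H"{swap}] & Y
\end{tikzcd}
\]
commutes, and since $s_0$ is an acyclic cofibration and $p$ is a fibration between fibrant objects, it admits a filler $\tilde H\colon I_A B\to X$. Set $h\coloneqq \tilde H s_1$. Then $ph=H s_1=b$, and since both coproduct inclusions $s_0,s_1$ agree when precomposed with $i$, one has $hi=\tilde Hs_1 i=\tilde Hs_0 i=li=a$, so $h$ is the desired lift.

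The step I expect to be the most delicate is the setup of the $\nabla i$-square, in particular producing the bottom map $I_AB\to Z$: this is where one must use that $Z$ is fibrant and the auxiliary acyclic cofibration $B\overset{\sim}{\hookrightarrow} D_A B$ built into the weak (as opposed to strong) cylinder object. Everything afterwards is a formal diagram chase using the two lifting hypotheses and the fact that $s_0$ is an acyclic cofibration lifting against a fibration between fibrant objects.
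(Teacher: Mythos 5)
Your proof is correct and follows essentially the same route as the paper's: first solve the problem up to $q$ using the lifting property of $q\circ p$ against $i$, package the discrepancy as a map $B\coprod_A B\to Y$, extend its composite with $q$ over $I_AB$ via the acyclic cofibration $B\hookrightarrow D_AB$ and fibrancy of $Z$, lift against $\nabla i$, and finally lift the resulting homotopy through $p$ along the acyclic cofibration $B\overset{\sim}{\hookrightarrow}I_AB$. The only difference is that you spell out the construction of the bottom map $I_AB\to Z$ slightly more explicitly than the paper does.
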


\begin{proof}

Consider a lifting problem:

\[
\begin{tikzcd}
  A \ar[d,hook,"i"] \ar[r] & X \ar[d,"p",->>] \\
B \ar[r] \ar[ur,dotted,"?"] & Y \ar[d,"q",->>] \\
 & Z
\end{tikzcd}
\]

using the lifting property of $i$ against $q \circ p$, we obtain a map $?_0 \colon  B \rightarrow X$ making the upper triangle commutes, but where the lower triangle only commutes when post composing it with $q$. We hence have two different maps $B \rightrightarrows Y$ that are equalized by $A \rightarrow B$ and co-equalized by $Y \rightarrow Z$. We see these as a map $ \tau\colon B \coprod_A B \rightarrow Y$ whose composite with $q$ factors through the codiagonal map $B \coprod_A B \rightarrow B$, and as $Z$ is fibrant there is a self-homotopy $I_A B \rightarrow Z$ extending $q \tau$ which gives us the solid diagram on the left below:

\[
\begin{tikzcd}
  B \coprod_A B \ar[r,"\tau"] \ar[d,"\nabla i"swap] & Y \ar[d,"q"] \\
I_A B \ar[r] \ar[ur,dotted,"h"description] & Z \\
\end{tikzcd}
\qquad
\begin{tikzcd}
A \arrow[rd, hook] \arrow[ddd, hook] \arrow[rr] &                                       & X \arrow[ddd, "p", two heads] \\
                                                & B \arrow[d, "\sim", hook] \arrow[ru,"?_0"description] &                               \\
                                                & I_A B \arrow[rd,"h"] \arrow[ruu, dotted]  &                               \\
B \arrow[ru, hook] \arrow[rr]                   &                                       & Y                            
\end{tikzcd}\]

As we assumed $\nabla i$ has the lifting property, one obtains a dotted lift $h$, which then fits into the solid diagram on the right above. It admits a dotted arrow as drawn because $p$ is a core fibration and $B \overset{\sim}{\hookrightarrow} I_A B$ an acyclic cofibration. The composite diagonal arrow $B \rightarrow X$ provides the diagonal filling we needed to conclude the proof.\end{proof}

We start with left Bousfield localizations which given the machinery we have developed in the rest of the paper, are fairly easy:

\begin{theorem}\label{Th:LeftBousfieldMain} Let $\Ccal$ be an accessible (resp. combinatorial) weak model category, and let $S$ be a set of morphisms in $Ho(\Ccal)$. Then there is an accessible (resp. combinatorial) weak model structure $\Lb^c_S \Ccal$ on $\Ccal$ such that:

\begin{enumerate}

\item\label{Th:LeftBousfieldMain:Id_Left_quillen} The identity $\Ccal \rightarrow \Lb^c_S \Ccal$ is a left Quillen functor.

\item\label{Th:LeftBousfieldMain:it_inverts_arrows} The induced functor $Ho(\Ccal) \rightarrow Ho( \Lb^c_S \Ccal)$ sends all elements of $S$ to isomorphisms.

\item\label{Th:LeftBousfieldMain:is_core_saturated} $\Lb^c_S \Ccal$ is core left saturated.

\item\label{Th:LeftBousfieldMain:is_universal} $\Lb^c_S \Ccal$ is universal for the three properties above.

\end{enumerate}

Moreover:

\begin{enumerate}[resume]

\item\label{Th:LeftBousfieldMain:same_cof} $\Lb^c_S \Ccal$ has the same cofibrations as $\Ccal$.

\item\label{Th:LeftBousfieldMain:kinda_same_fib} A map between fibrant objects of $\Lb^c_S \Ccal$ is a fibration of $\Lb^c_S \Ccal$ if and only if it is a fibration of $\Ccal$.

\end{enumerate}

\end{theorem}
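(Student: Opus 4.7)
The plan is to build $\Lb^c_S \Ccal$ in two stages. First, choose for each $s \in S$ a core cofibration $i_s \colon A \hookrightarrow B$ of $\Ccal$ representing $s$ (by picking a representative map between bifibrant objects and factoring it as a cofibration followed by an anodyne fibration), and recursively enlarge this collection by adjoining the codiagonals $\nabla^k i_s$ with respect to once-and-for-all fixed choices of relative weak cylinder objects; let $\bar S$ be the resulting set of core cofibrations. Second, let $\Ccal_S$ be the premodel category on the underlying category of $\Ccal$ with the same cofibrations and anodyne fibrations as $\Ccal$, and whose (anodyne cofibration, fibration) weak factorization system is generated by the old anodyne cofibrations together with $\bar S$; this weak factorization system exists and is $\kappa$-combinatorial or $\kappa$-accessible for $\kappa$ chosen large enough. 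Finally, set $\Lb^c_S \Ccal := \Lb^c \Ccal_S$ using the saturation construction of \cref{th:main_saturation}.

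To verify that $\Ccal_S$ is a weak model category I invoke \cref{prop:Model_str_from_3from2}. Since the class of fibrations, and hence of fibrant objects, of $\Ccal_S$ is a subclass of that of $\Ccal$, any old acyclic cofibration is new acyclic, and consequently any relative weak cylinder in $\Ccal$ for a core cofibration remains one in $\Ccal_S$, giving the cylinder axiom. The cancellation condition for acyclic cofibrations amongst core cofibrations follows formally from \cref{lem:2outof6_for_acyclic}. Since being a weak model category is a core property and $\Lb^c$ preserves the core, the saturation $\Lb^c_S \Ccal$ is itself a weak model category, accessible or combinatorial, and core left saturated by construction.

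The easy properties follow immediately: properties (1) and (5) hold by construction, (3) by definition of $\Lb^c$, and (2) holds because each $i_s \in \bar S$ is an anodyne core cofibration of $\Ccal_S$ (hence an equivalence in $\Lb^c_S \Ccal$), while $s$ is represented by the composition of $i_s$ with an anodyne fibration (which is still an equivalence in $\Lb^c_S \Ccal$). For (6), given a fibration $p \colon X \twoheadrightarrow Y$ of $\Ccal$ with $X, Y$ fibrant in $\Lb^c_S \Ccal$, both $X \to 1$ and $Y \to 1$ have the RLP against every $\nabla^k i_s$, so applying \cref{lem:nabla_lift} to the composable fibrations $p$ and $Y \to 1$ shows that $p$ has the RLP against each $i_s$; combined with its RLP against the old anodyne cofibrations this makes $p$ a fibration of $\Ccal_S$, and then a fibration of $\Lb^c \Ccal_S$ because a fibration between fibrant objects of $\Ccal_S$ automatically has the RLP against all core acyclic cofibrations.

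The main obstacle is the universal property (4). Given a left Quillen functor $F \colon \Ccal \to \Dcal$ to a core left saturated weak model category with $F$ inverting $S$, by \cref{lem:saturation_1}.\ref{lem:saturation_1:universality} applied to $\Ccal_S$ and the universal property of $\Lb^c$ from \cref{th:main_saturation}, it suffices to show that $F$ sends every element of $\bar S$ to an anodyne cofibration of $\Dcal$. For $i_s$ this holds because $F(i_s)$ is a core cofibration representing the isomorphism $F(s)$ in $Ho(\Dcal)$, hence is a core acyclic cofibration, hence anodyne by core left saturation of $\Dcal$. Inductively, assume $F(i)$ is anodyne for some $i \colon A \hookrightarrow B$ in $\bar S$; then $F(I_A B)$ is a relative weak cylinder for $F(i)$ in $\Dcal$ (since $F$ preserves colimits and acyclic core cofibrations, and $\Dcal$ is core left saturated), so $F(\nabla i)$ equals the cofibration $F(B) \coprod_{F(A)} F(B) \to F(I_A B)$. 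The first inclusion $F(B) \to F(B) \coprod_{F(A)} F(B)$ is a pushout of the anodyne cofibration $F(i)$, hence anodyne, while the composite $F(B) \to F(I_A B)$ is anodyne by the weak cylinder property; \cref{lem:2outof6_for_acyclic} together with core left saturation of $\Dcal$ then yields that $F(\nabla i)$ is anodyne, completing the induction.
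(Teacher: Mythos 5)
Your construction of $\Lb^c_S\Ccal$ is exactly the paper's (core cofibration representatives, closure under $\nabla^k$, supremum of weak factorization systems, then $\Lb^c$), but the verification that the intermediate premodel category $\Ccal_S$ is a weak model category has a genuine gap. You invoke \cref{prop:Model_str_from_3from2}, whose second condition is the \emph{right} cancellation property for acyclic cofibrations amongst core cofibrations ($j$ and $i\circ j$ acyclic $\Rightarrow$ $i$ acyclic, i.e.\ cancelling the \emph{first} factor), and you claim it ``follows formally from \cref{lem:2outof6_for_acyclic}.'' It does not: that lemma yields only the \emph{left} cancellation property ($ji$ and $j$ acyclic $\Rightarrow$ $i$ acyclic), which holds in \emph{every} premodel category. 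If right cancellation were likewise formal, \cref{prop:Model_str_from_3from2} would reduce to the cylinder axiom alone, which it does not. This cancellation property is precisely the nontrivial content to be established for $\Ccal_S$, and it is exactly here that the closure of your generating set under $\nabla$ must enter. The paper's route is to first prove your property (6) via \cref{lem:nabla_lift} and then observe that relative weak path objects of $\Ccal$ for core fibrations between $\Ccal_S$-fibrant objects remain relative weak path objects in $\Ccal_S$, so that the path object axiom (equivalently, by the dual of \cref{prop:Model_str_from_3from2}, the missing cancellation) holds. Your proof as written never uses the $\nabla^k i_s$ to establish the weak model structure, so this step must be redone.

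Two smaller points. First, in your argument for (6) you only derive the right lifting property of $p$ against the $i_s$, but a fibration of $\Ccal_S$ must lift against all of $\bar S$, i.e.\ against every $\nabla^k i_s$; you need to apply \cref{lem:nabla_lift} at each level $k$ (with $Y$ lifting against $\nabla^{k+1} i_s$), as the paper does. Second, in the inductive step of the universality argument you again cite \cref{lem:2outof6_for_acyclic} to cancel the \emph{first} factor $F(B)\to F(B)\coprod_{F(A)}F(B)$ from the composite, which is the wrong direction for that lemma; here the slip is harmless because $\Dcal$ is a weak model category, so one may instead use $2$-out-of-$3$ for its equivalences (or the right cancellation property it enjoys by \cref{prop:Model_str_from_3from2}) to conclude that $F(\nabla i)$ is an acyclic core cofibration, hence anodyne by core left saturation. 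With these repairs your argument coincides with the paper's proof.
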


\begin{proof}

For each $s \in S$, we choose a representative $j$ of $s$ which is a core cofibration, and we also makes choices of $\nabla^k j$ for all $k \geqslant 0$ and $j$. Let $J$ be the set of all these cofibrations. 

Consider the accessible (resp. combinatorial) premodel structure $\Lcal_S \Ccal$ on $\Ccal$ which has the same cofibrations as $\Ccal$ and whose anodyne cofibrations are generated by the anodyne cofibrations of $\Ccal$ and all the maps in $J$, that is the supremum in the sense of \ref{nota:before_main_th} of the accessible weak factorization system (anodyne cofibration, fibrations) and the cofibrantly generated weak factorization system generated by $J$ (whose existence is guarantied by \cref{th:appendix_Main}.\ref{th:appendix_Main:sup_wfs}).

Note that $\Lcal_S \Ccal$ depends on the choice of the representative $j$ and of the $\nabla^k j$ we made, but we will show that its core left saturation $\Lb^c_S \Ccal \colon = \Lb^c \Lcal_S \Ccal$ has the properties claimed by the theorem and hence only depends on $\Ccal$ and $S$ because of the universal property (point \ref{Th:LeftBousfieldMain:is_universal}).

Properties \ref{Th:LeftBousfieldMain:is_core_saturated} and \ref{Th:LeftBousfieldMain:same_cof} are immediate by construction (core left saturation do not affect the cofibrations). We then prove that $\Lcal_S \Ccal$ satisfies property \ref{Th:LeftBousfieldMain:kinda_same_fib}. If $f\colon X \rightarrow Y$ is a $\Ccal$-fibration between $\Lcal_S \Ccal$-fibrant objects then, for any $j \in J$, as $X$ has the lifting property against $j$ and $Y$ has the lifting property against $\nabla j \in J$, \cref{lem:nabla_lift} (applied with $q\colon Y \rightarrow 1$) shows that $f$ has the lifting property against $j$. Because core right saturation do not affect the core, this shows that $\Lb^c_S \Ccal$ satisfies \ref{Th:LeftBousfieldMain:kinda_same_fib} as well.

We then show that $\Lcal_S \Ccal$ is indeed a weak model category. It has the same cofibrations and more anodyne cofibrations than $\Ccal$, so every cofibration clearly has a relative weak cylinder object (because it has one in $\Ccal$), and similarly, any fibration between fibrant objects in $\Lcal_S \Ccal$ has a relative weak path object because it has one in $\Ccal$, which is still one in $\Lcal_S \Ccal$ due to property \ref{Th:LeftBousfieldMain:kinda_same_fib} for $\Lcal_S \Ccal$ proved above.

Property \ref{Th:LeftBousfieldMain:it_inverts_arrows} is clear as well: every arrow in $S$ has a representative which is a core cofibration which becomes an anodyne cofibration in $\Lcal_S \Ccal$, so it is sent to an isomorphism in $Ho(\Lcal_S \Ccal)$. It only remains to check property \ref{Th:LeftBousfieldMain:is_universal}, i.e. the universality:

Let $L\colon \Ccal \rightarrow \Dcal$ be a left Quillen functor satisfying \ref{Th:LeftBousfieldMain:it_inverts_arrows} with $\Dcal$ core left saturated.  We have to show that any arrow in $J$ is sent to an anodyne cofibration in $\Dcal$: this will immediately imply that $L$ factor (uniquely) through $\Lcal_S \Ccal$, and hence as $\Dcal$ is core left saturated through $\Lb^c_S \Ccal$. Now this claim is easily proved by induction: if $j \in J$ is a core cofibration representing an arrow in $S$, then its image by $L$ is a core cofibration and an equivalence, hence is an anodyne cofibration by core left saturation of $\Dcal$, and if $j = \nabla j'$ for a core cofibration $j'$ such that $L j'$ is an anodyne cofibration in $\Dcal$, then $L j$ is a $\nabla L j'$ hence is an equivalence (as $j'$ is an anodyne cofibration) and hence is also an anodyne cofibration by core saturation of $\Dcal$.
\end{proof}

\begin{remark}\label{Rk:LeftSat_Bousfield} Instead of the core left saturation, one can take the left saturation of $\Lcal_S \Ccal$:

\[ \Lb_S \Ccal \coloneqq \Lb \Lcal_S \Ccal = \Lb \Lb_S^c \Ccal \]

which also satisfies all the condition of theorem, except that core left saturation is replaced by left saturation.

\end{remark}

\begin{remark}\label{rk:LBloc_at_classes} In the case of combinatorial model categories, \cref{Th:LeftBousfieldMain} seems optimal in the sense that if one wants the localization to be combinatorial it has to be a localization at a set of morphisms (because there will be in the localization a generating set of anodyne cofibrations). But in the accessible case this is no longer the case: if we only want the localization to be accessible it might happen that there are situations where a left Bousfield localization exists without it being a left Bousfield localization at a \emph{set} of maps.

Given a \emph{class} $S$ of arrows in $\Ccal$, one says that the left Bousfield localization of $\Ccal$ at $S$ exists if for some choices of a set $J_0$ of cofibrations representing all the elements of $S$, and of $J$ a closure of this set of representatives under $\nabla$, such the weak factorization system generated by $J$ exists and is accessible. If this happen the rest of the proof of \cref{Th:LeftBousfieldMain} carries over and one can construct the Bousfield localization $\Lb_S^c  \Ccal$ (or $\Lb_S  \Ccal$) and it has all the properties claimed in \cref{Th:LeftBousfieldMain}. The following proposition gives a characterization of classes for which this work, which has the advantages of being a necessary and sufficient condition, and of making left localization and right localization look extremely similar, but does not seems to very be convenient in practice. I have not been able to find a better characterization yet, but I still believe it exists.
\end{remark}

\begin{prop}\label{prop:Bloc_at_class_inverted} Consider a Quillen adjunction between two accessible weak model categories:
\[ L \colon  \Ccal \leftrightarrows \Dcal \colon  R \]

and let $S$ be the class of arrows in $Ho(\Ccal)$ which are inverted by $L \colon  Ho(\Ccal) \rightarrow Ho(\Dcal)$. Then the left Bousfield localization of $\Ccal$ at $S$ exists (in the sense of \cref{rk:LBloc_at_classes}) and is accessible. If $\Ccal$ and $\Dcal$ are combinatorial, this localization is also combinatorial.
\end{prop}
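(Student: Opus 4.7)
Plan: The approach is to reduce the (possibly proper) class $S$ to a generating set of cofibrations using accessibility of $L$, and then apply \cref{Th:LeftBousfieldMain}. The key observation is that a core cofibration $i$ in $\Ccal$ represents an element of $S$ if and only if $Li$ is an equivalence in $\Dcal$; since $Li$ is automatically a cofibration (as $L$ is left Quillen), this is equivalent to $Li$ being an acyclic cofibration in $\Dcal$, equivalently an anodyne cofibration in the left saturation $\Lb \Dcal$ (which exists and is accessible by \cref{th:main_saturation}). Let $W$ be the class of cofibrations $i$ in $\Ccal$ such that $Li$ is an anodyne cofibration of $\Lb \Dcal$; this is exactly the class of cofibrations representing elements of $S$.

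Fix an uncountable regular cardinal $\kappa$ such that $\Ccal$, $\Dcal$ and $\Lb \Dcal$ are $\kappa$-accessible premodel categories and such that $L$ is a $\kappa$-accessible functor. I claim $W$ is $\kappa$-accessibly embedded in the arrow category of $\Ccal$: it is the intersection of the cofibrations of $\Ccal$ (the left class of a $\kappa$-accessible WFS, hence $\kappa$-accessibly embedded) with the $L$-preimage of the anodyne cofibrations of $\Lb \Dcal$ (also $\kappa$-accessibly embedded, and the induced functor $L \colon \mathrm{Arr}(\Ccal) \to \mathrm{Arr}(\Dcal)$ is $\kappa$-accessible). Moreover $W$ is closed under pushouts, transfinite composition and retracts, by passing the corresponding closure properties of the anodyne cofibrations of $\Lb \Dcal$ through the left adjoint $L$. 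Finally $W$ is closed under the $\nabla$ construction: given $i \in W$ with chosen relative weak cylinder $I_A B$, the image $L(I_A B)$ is a relative weak cylinder for $Li$, and the map $\nabla(Li)$ (which can be taken as $L(\nabla i)$) is an acyclic cofibration by a 2-out-of-3 argument on the factorization $LB \to LB \coprod_{LA} LB \to L(I_A B)$: the first inclusion is a pushout of the acyclic cofibration $Li$ and the composite is acyclic by the cylinder axiom in $\Lb \Dcal$.

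Now invoke the general existence criterion for accessible weak factorization systems from the appendix (a Smith-type theorem contained in \cref{th:appendix_Main}): any accessibly embedded subcategory of the arrow category closed under pushouts, transfinite composition and retracts is the left class of an accessible WFS, and is in particular cell-generated by a small category of arrows — by a literal set in the combinatorial case, where all WFS involved are $\kappa$-combinatorial. Extract such a small generating set $J_0 \subseteq W$, and enlarge $J_0$ by iteratively adjoining the arrows $\nabla^k j$ for $j \in J_0$ (which remain in $W$ by the previous paragraph) to ensure closure under $\nabla$. The proof of \cref{Th:LeftBousfieldMain} then applies verbatim with this $J_0$, producing the desired accessible (resp.\ combinatorial) left Bousfield localization $\Lb_S^c \Ccal$.

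The main obstacle is the appeal to \cref{th:appendix_Main} to produce the set $J_0$ from the accessibly embedded class $W$; this is the only step where the deep locally presentable and accessible category machinery of the appendix is used in an essential way. The remaining ingredients — verifying closure of $W$ under pushouts, transfinite composition, retracts and $\nabla$, translating accessibility of $L$ into accessibility of $W$, and finally applying \cref{Th:LeftBousfieldMain} — are essentially bookkeeping once the accessibility results from the appendix are in hand.
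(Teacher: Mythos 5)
Your overall strategy -- identify the class of cofibrations sent by $L$ to acyclic cofibrations, show it is the left class of an accessible weak factorization system, and feed a suitable generating family into \cref{Th:LeftBousfieldMain} -- is the same as the paper's. But the step you flag as the crux rests on a theorem that is not in the paper. \cref{th:appendix_Main} contains no ``Smith-type'' recognition principle of the form ``an accessibly embedded subcategory of the arrow category closed under pushouts, transfinite composition and retracts is the left class of an accessible WFS.'' Its contents are suprema, infima, left/right transfers, and limits of the relevant bicategories. The paper's proof uses exactly these: the class of maps sent by $L$ to acyclic cofibrations is the left class of a WFS by \emph{left transfer} (\cref{th:appendix_Main}.\ref{th:appendix_Main:left_transfer}) of the (acyclic cofibration)-WFS of $\Dcal$ along $L$; intersecting with the cofibrations of $\Ccal$ is an \emph{infimum} (\cref{th:appendix_Main}.\ref{th:appendix_Main:inf_wfs}); and the restriction to cofibrant domains is handled by \cref{lem:Constructing_some_WFS}.\ref{lem:Constructing_some_WFS:1}. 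In the merely accessible (non-combinatorial) case, which the proposition explicitly covers, I am not aware of any Smith-type recognition theorem you could substitute, so this is a genuine gap rather than a citation slip.

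There is a second problem with the way you extract the generating family. A cellular generating set $J_0$ for the weakly saturated class $W$ consists of arbitrary cofibrations, not core ones. For a non-core cofibration $j$ the construction $\nabla^k j$ is undefined (a relative weak cylinder object requires a cofibrant domain), \cref{lem:nabla_lift} -- needed for property \ref{Th:LeftBousfieldMain:kinda_same_fib} of \cref{Th:LeftBousfieldMain} -- does not apply, such a $j$ does not represent an element of $S$ as required by \cref{rk:LBloc_at_classes}, and the universality argument in the proof of \cref{Th:LeftBousfieldMain} breaks: core left saturation of the target $\Dcal'$ only forces \emph{core} acyclic cofibrations to become anodyne, so it says nothing about the image of a non-core generator. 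This is precisely why the paper does not extract a set of generators for $W$ at all, but instead takes $J$ to be the entire class of \emph{core} cofibrations inverted by $L$ and proves directly, via the three constructions above, that the WFS whose right class is $J^{\boxslash}$ exists and is accessible (resp.\ combinatorial).
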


As the proof below will show, if $\kappa$ is an \emph{uncountable} regular cardinal, $\Ccal$ and $\Dcal$ are $\kappa$-accessible (resp. $\kappa$-combinatorial) weak model categories and $(L,R)$ is a $\kappa$-adjunction in the sense of \cref{not:kappa_adjunction}, then the localizations $\Lb^c_S \Ccal$ and $\Lb_S \Ccal$ are also $\kappa$-accessible (resp. $\kappa$-combinatorial).

\begin{proof}

We take $J$ to be the class of all core cofibrations whose image by $L$ is an acyclic cofibrations in $\Dcal$. This is a set of representatives of $S$ and it is easy to see that it is already closed under $\nabla$ and $\nabla^k$. Moreover the accessible weak factorization system generated by $J$ exists, and is appropriately $\kappa$-accessible or $\kappa$-combinatorial, because:

\begin{itemize}

\item The class of maps whose image by $L$ is an acyclic cofibration in $\Dcal$ is the left class of a (left transferred) weak factorization system on $\Ccal$ by \cref{th:appendix_Main}.\ref{th:appendix_Main:left_transfer}.

\item The class of cofibrations whose image by $L$ are acyclic cofibrations in $\Dcal$ is also the left class of weak factorization system: it is the infimum in the sense of \cref{nota:before_main_th} of the accessible weak factorization constructed in the previous point, and the (cofibration,anodyne fibration) weak factorization system. Hence it exists and is accessible by \cref{th:appendix_Main}.\ref{th:appendix_Main:inf_wfs}

\item Finally, the class of maps with the right lifting property against the core cofibration sent to acyclic cofibrations (i.e. against $J$ above) is the right class of an accessible weak factorization system by \cref{lem:Constructing_some_WFS}.\ref{lem:Constructing_some_WFS:1} applied to the previous weak factorization system and the (cofibration,anodyne fibration) weak factorization system.

\end{itemize}

\end{proof}

\begin{remark}\label{Rk:SlocalObjects} A typical property of Bousfield localization is missing in our statement of \cref{Th:LeftBousfieldMain}: that the fibrant objects of $\Lb^c_S \Ccal$ are exactly the object that are fibrant in $\Ccal$ and $S$-local. The reason why we did not include this is that as we have not defined a simplicial hom space in the framework of weak model categories yet, and we would need this to define what are $S$-local objects. But it should also be clear from the construction of $\Lb^c_S \Ccal$ in the proof of \cref{Th:LeftBousfieldMain} (more precisely of ``$\Lcal_S \Ccal$'') that this condition is satisfied for all reasonable definitions of the simplicial hom spaces: $\Lb_S \Ccal$, $\Lb^c_S \Ccal$ and $\Lcal_S \Ccal$ all have the same fibrant objects, and an object of $\Lcal_S \Ccal$ is fibrant if and only if it is fibrant in $\Ccal$ and has the lifting property against choice of core cofibrations representing all maps in $S$ as well as all their higher homotopy codiagonal, which exactly express $S$-locality.
\end{remark}

\begin{remark}\label{rk:Bousfield_on_Ho_cat}Given an accessible weak model category $\Ccal$, and a left Bousfield localization $\Lb_S \Ccal$ (or $\Lb^c_S \Ccal$). One easily see that the right Quillen functor $Id\colon  \Lb_S \Ccal \rightarrow \Ccal$ induces a fully faithful functor $Ho(\Lb_S \Ccal) \rightarrow Ho(\Ccal)$. Indeed, this follows immediately from the fact that the bifibrant objects of $\Lb_S \Ccal$ are a full subcategory of the bifibrant objects of $\Ccal$, and that the path object or cylinder constructed in $\Ccal$ for objects that are bifibrant in $\Lb_S \Ccal$ are still path objects (or cylinder object) in $\Lb_S \Ccal$, so the homotopy relation is the same in both categories.

As a Quillen adjunction induces an adjunction at the level of homotopy categories (see for example \cite[Prop. 2.4.3]{henry2018weakmodel}), so this shows that $Ho(\Lb_S \Ccal)$ is a reflective full subcategory of $Ho(\Ccal)$.
\end{remark}

We now move to right Bousfield localizations. There is a well-known additional difficulty with right Bousfield localization, which is that in order to get a combinatorial localization one should not take a right localization at a set of maps, but rather something like the localization at a class of maps that are homotopically right orthogonal to some set of objects (or of arrow). We introduce the following definition to handle this:

\begin{definition}\label{def:right_localizer} Let $\Ccal$ be a $\kappa$-accessible weak model category, and $S$ a class of arrows in $Ho(\Ccal)$. One says that $S$ is a \emph{$\kappa$-accessible right localizer} if there exists a $\kappa$-accessible right Quillen functor $R \colon  \Ccal \rightarrow \Dcal$ with $\Dcal$ a $\kappa$-accessible weak model category, such that $S$ is the class of all arrows that are sent to isomorphisms by $Ho(R) \colon  Ho(\Ccal) \rightarrow Ho(\Dcal)$.

One says that $S$ is a \emph{$\kappa$-combinatorial right localizer} if furthermore $\Ccal$ and $\Dcal$ are $\kappa$-combinatorial.

One says that $S$ is an accessible (resp. combinatorial) right localizer if it is a $\kappa$-accessible (resp. $\kappa$-combinatorial) right localizer for some regular cardinal $\kappa$.
\end{definition}

\begin{remark} In the literature, one usually considers right Bousfield localization at the class of ``$K$-colocal equivalences'' for $K$ a set of objects. Similarly to \cref{Rk:SlocalObjects}, as we have not defined yet simplicial hom spaces in the framework of weak model categories, one cannot really use this definition here. But it should be easy to convince any reader already familiar with these notions that our definition of combinatorial right localizer is equivalent to classes of $K$-colocal equivalences for some set $K$ of objects: given a set of objects $K$, one choose a cosimplicial resolution $(k_i)$ of each object $k \in K$. Each such resolution induce a left Quillen functor $\widehat{\Delta} \rightarrow \Ccal$, and together they give rise to a right Quillen functor $\Ccal \rightarrow (\widehat{\Delta})^K$. The maps in $Ho(\Ccal)$ sent to isomorphisms in $Ho(\widehat{\Delta}^K)$ by this functor are, essentially by definition, the $K$-colocal equivalences.

Conversely, the combinatorial right localizer attached to a right Quillen functor $R\colon  \Ccal \rightarrow \Dcal$ is easily seen to be the class of $K$-colocal equivalence for $K$ the set of image by the left adjoint of $R$ of all the $\kappa$-presentable cofibrant objects of $\Dcal$, for any $\kappa$ such that $\Dcal$ is $\kappa$-presentable.
\end{remark}

\begin{theorem}\label{th:RightBousfieldMain} Let $\Ccal$ be an accessible (resp. combinatorial) weak model category, and let $S$ be any accessible right localizer of $\Ccal$. Then there exists an accessible weak model structure $\R^c_S \Ccal$ such that:

\begin{enumerate}

\item\label{Th:RightBousfieldMain:Id_Right_quillen} The identity $\Ccal \rightarrow \R^c_S \Ccal$ is a right Quillen functor.

\item\label{Th:RightBousfieldMain:it_inverts_arrows} The induced functor $Ho(\Ccal) \rightarrow Ho( \R^c_S \Ccal)$ send all elements of $S$ to isomorphisms.

\item\label{Th:RightBousfieldMain:is_core_saturated} $\R^c_S \Ccal$ is core right saturated.

\item\label{Th:RightBousfieldMain:is_universal} $\R^c_S \Ccal$ is universal for the three properties above, and $S$ is the right localizer defined by $\Ccal \rightarrow \R^c_S \Ccal$.

\end{enumerate}

Moreover:

\begin{enumerate}[resume]

\item\label{Th:RightBousfieldMain:same_fib} $\R^c_S \Ccal$ has the same fibrations as $\Ccal$.

\item\label{Th:RightBousfieldMain:kinda_same_cof} A map between cofibrant objects of $\R^c_S \Ccal$ is a cofibration of $\R^c_S \Ccal$ if and only if it is a cofibration of $\Ccal$.

\end{enumerate}

Given an \emph{uncountable} regular cardinal $\kappa$, If $\Ccal$ is $\kappa$-accessible (resp. $\kappa$-combinatorial) and $S$ is a $\kappa$-accessible (resp. $\kappa$-combinatorial) right localizer on $\Ccal$, then $\R^c_S \Ccal$ is $\kappa$-accessible (resp. $\kappa$-combinatorial).

\end{theorem}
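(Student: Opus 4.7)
My plan is to dualize the construction of the left Bousfield localization (\cref{Th:LeftBousfieldMain}), with the twist that an accessible right localizer is specified not by a set of generators but by an entire accessible right Quillen functor $R\colon \Ccal \to \Dcal$ whose class of inverted arrows in the homotopy category is $S$. Setting $L \dashv R$, the class of maps I want to play the role of the anodyne fibrations of the localization is
\[
K_0 = \{f : f \text{ is a fibration in }\Ccal \text{ and } R(f) \text{ is an acyclic fibration in }\Dcal\}.
\]
To realize $K_0$ as the right class of an accessible weak factorization system on $\Ccal$, I first apply \cref{th:main_saturation} to obtain the right-saturated accessible premodel category $\R\Dcal$, whose (cofibration, anodyne fibration) WFS has right class equal to all acyclic fibrations of $\Dcal$. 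Right-transferring this along $L \dashv R$ via \cref{th:appendix_Main}.\ref{th:appendix_Main:right_transfer} produces an accessible WFS on $\Ccal$ with right class $R^{-1}(\text{acyclic fib of }\Dcal)$, and taking the supremum with the (anodyne cofibration, fibration) WFS of $\Ccal$ via \cref{th:appendix_Main}.\ref{th:appendix_Main:sup_wfs} intersects the right classes to yield exactly $K_0$. I then define $\R_S\Ccal$ to be the premodel category with the same (anodyne cof, fib) WFS as $\Ccal$ and this new WFS as its (cof, ano fib) WFS, and set $\R^c_S\Ccal := \R^c \R_S\Ccal$ via core right saturation.

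The central step is showing that $\R_S\Ccal$ is a weak model category, for which I use the dual of \cref{prop:Model_str_from_3from2}: the path object axiom together with left cancellation of acyclic fibrations among core fibrations. The path object axiom is easy since $\R_S\Ccal$ has the same fibrations, fibrant objects, and anodyne cofibrations as $\Ccal$ and has fewer core cofibrations, so any relative strong path object in $\Ccal$ remains one in $\R_S\Ccal$. The crux is the characterization: a core fibration $p$ of $\R_S\Ccal$ is acyclic if and only if it lies in $K_0$. One direction is automatic. For the other, given an acyclic core fibration $p\colon B \to C$, I cofibrantly replace $B$ in $\R_S\Ccal$ as $B^{cof}\to B$ with $B^{cof}\to B$ in $K_0$ and $B^{cof}$ cofibrant in $\R_S\Ccal$, factor the still-acyclic composite $p' = p\circ(B^{cof}\to B)$ as $p''\circ j$ in the new WFS with $j$ a core cofibration of $\R_S\Ccal$ and $p''\in K_0$, and apply the retract lemma to conclude $p'\in K_0$; then $R(p')$ and $R(B^{cof}\to B)$ are both acyclic fibrations in $\Dcal$ by construction, so 2-out-of-3 forces $R(p)$ to be an equivalence, hence an acyclic fibration since it is already a fibration by right Quillenness, placing $p$ in $K_0$. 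Left cancellation in $\R_S\Ccal$ then reduces, by applying $R$, to left cancellation of acyclic fibrations in the weak model category $\Dcal$, which is \cref{lem:2outof6_for_acyclic}.

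With $\R_S\Ccal$ established as a weak model category, \cref{th:main_saturation} ensures that $\R^c_S\Ccal$ is also one (same core), is core right saturated (property 3), has the same fibrations as $\Ccal$ (property 5), and preserves accessibility rank for uncountable $\kappa$. Property 1 (right Quillen) is immediate since anodyne fibrations of $\Ccal$ already lie in $K_0$. For property 2, given $s\in S$ represented by $f\colon A\to B$, factor $f = p\circ j$ using $\Ccal$'s (anodyne cofibration, fibration) WFS; since $R(f)$ and $R(j)$ are equivalences, so is $R(p)$, and being a fibration it is acyclic, so $p\in K_0$ is an anodyne fibration of $\R^c_S\Ccal$ while $j$ remains an anodyne cofibration of $\R^c_S\Ccal$, so $f$ becomes an equivalence in $\R^c_S\Ccal$. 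For property 6, same-core reduces matters to showing that a cofibration $f\colon X\to Y$ of $\Ccal$ between $\R_S\Ccal$-cofibrant objects is a cofibration of $\R_S\Ccal$; factoring $f = q\circ g$ in the new WFS with $q\in K_0$, the cofibrancy of $Y$ in $\R_S\Ccal$ provides a section of $q$, which can then be combined with the LLP of $g$ against $K_0$ to build lifts of $f$ against any element of $K_0$. For property 4, the left adjoint of any right Quillen $F\colon\Ccal\to \Dcal'$ inverting $S$ with $\Dcal'$ core right saturated sends cofibrations of $\Dcal'$ to maps with LLP against $K_0$, because every element of $K_0$ is in $S$ and so is sent by $F$ to a fibration and an equivalence, i.e.\ an acyclic fibration in $\Dcal'$; this makes the left adjoint left Quillen into $\R_S\Ccal$, and the universal property of $\R^c$ in \cref{th:main_saturation} completes the factorization through $\R^c_S\Ccal$.

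The main obstacle will be the characterization of acyclic core fibrations of $\R_S\Ccal$ in terms of $R$ (the ``crux'' above), since one must replace the source of a general acyclic core fibration by a cofibrant object within $\R_S\Ccal$ itself (not merely within $\Ccal$) before the retract argument identifies the map with an element of $K_0$. Once this is in place, the remaining verifications are either routine dualizations of the left Bousfield case or direct consequences of the saturation machinery of \cref{sec:changing_combinatorial} and the appendix results.
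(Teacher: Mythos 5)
Your overall strategy (generate the new anodyne fibrations from the right Quillen functor $R\colon\Ccal\to\Dcal$ presenting the localizer, check the weak model axioms via the dual of \cref{prop:Model_str_from_3from2}, then invoke the saturation machinery) is the right one, and your verification that your $\R_S\Ccal$ is a weak model category is essentially sound. But there is a genuine gap in the choice of generating class: you take $K_0$ to be \emph{all} fibrations $f$ of $\Ccal$ with $R(f)$ an acyclic fibration, whereas the correct class consists only of the \emph{core} such fibrations. The paper defines the cofibrations of $\R^c_S\Ccal$ as the cofibrations of $\Ccal$ having the left lifting property against the core fibrations $p$ with $R(p)$ acyclic; producing this weak factorization system requires the extra step of restricting to core maps via \cref{lem:Constructing_some_WFS}.\ref{lem:Constructing_some_WFS:1}, exactly as in the third bullet of the proof of \cref{prop:Bloc_at_class_inverted}, and your construction omits the dual of that step. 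The omission is fatal for the universal property \ref{Th:RightBousfieldMain:is_universal}: your argument that any right Quillen $F\colon\Ccal\to\Dcal'$ inverting $S$, with $\Dcal'$ core right saturated, sends every element of $K_0$ to an anodyne fibration runs ``every element of $K_0$ is in $S$, hence $F(f)$ is an equivalence, hence an acyclic fibration, hence anodyne''. For a non-core $f\in K_0$ none of these steps is available: membership in $S$ and the notion of equivalence are only defined for arrows between fibrant or cofibrant objects, ``fibration and equivalence implies acyclic'' is only known for core fibrations, and core right saturation of $\Dcal'$ only promotes \emph{core} acyclic fibrations to anodyne ones. Concretely, $F=R\colon\Ccal\to\R^c\Dcal$ already satisfies the analogues of \ref{Th:RightBousfieldMain:Id_Right_quillen}--\ref{Th:RightBousfieldMain:is_core_saturated} but need not send a non-core $f\in K_0$ to an anodyne fibration of $\R^c\Dcal$, so it need not factor through your $\R_S\Ccal$. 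Your construction therefore has too many anodyne fibrations (and potentially too few cofibrant objects) to be universal; restricting the generators to core fibrations repairs this, and the rest of your argument then goes through.

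A secondary point: your sketch for \ref{Th:RightBousfieldMain:kinda_same_cof} (obtain a section $s$ of $q$ from cofibrancy of $Y$ and ``combine'' it with the lifting property of $g$) does not work as stated, because nothing forces $s\circ f=g$, so $f$ is not exhibited as a retract of $g$ and the two partial lifts cannot simply be composed. One genuinely needs the homotopy-correction argument of the dual of \cref{lem:nabla_lift}, which is how the paper proves the corresponding statement \cref{Th:LeftBousfieldMain}.\ref{Th:LeftBousfieldMain:kinda_same_fib} in the left-handed case.
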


\begin{proof}
Let $R \colon  \Ccal \rightarrow \Dcal$ be a right Quillen functor defining the localizer $S$.

One defines $\R^c_S \Ccal$ to have the same (anodyne cofibration,fibration) factorization system as $\Ccal$, and a cofibration of $\R^c_S \Ccal$ to be a cofibration of $\Ccal$ that have the left lifting property against all core fibrations that are in $S$, or equivalently all core fibrations $p$ such that $R(p)$ is acyclic. The fact that these forms a $\kappa$-accessible (resp. $\kappa$-combinatorial) weak factorization system on $\Ccal$ is proved using the exact dual of the argument used in the proof of \cref{prop:Bloc_at_class_inverted}. With this definition \ref{Th:RightBousfieldMain:same_fib} is immediate.

As $\R^c_S \Ccal$ has the same anodyne cofibrations and less cofibrations than $\Ccal$, \ref{Th:RightBousfieldMain:Id_Right_quillen} is immediate. For \ref{Th:RightBousfieldMain:it_inverts_arrows} let $s \in S$, and chose a core fibration representing $s$. It is sent by $R$ to an acyclic core fibration and hence an anodyne fibration in $\R^c_S \Ccal$ by definition. 

One then directly show property \ref{Th:RightBousfieldMain:is_universal}: let $R'\colon  \Ccal \rightarrow \Dcal'$ be a right Quillen functor inverting all arrows in $S$, with $\Dcal'$ core right saturated. One just need to show that $R'$ send all anodyne fibrations of $\R^c_S \Ccal$ to anodyne fibrations of $\Dcal'$, but as the anodyne fibrations of $\R^c_S \Ccal$ are generated (as a right class, i.e. cofibration are defined by the lifting properties against these maps) by the anodyne fibrations of $\Ccal$ and the core fibrations of $\Ccal$ that are in $S$, it is enough to show that this second class of arrows are sent to anodyne fibrations by $R'$. For any such map, its image by $R'$ is a core fibration and an equivalence, hence it is an acyclic core fibration, and hence by core right saturation of $\Dcal'$, it is an anodyne fibration. This concludes the proof of universality.

In particular, this universality property applies to the right Quillen functor $\Ccal \rightarrow \Dcal \rightarrow \R^c \Dcal$, hence we have a factorization $\Ccal \rightarrow \R^c_S \Ccal \rightarrow \R^c \Dcal$, so any arrow in $Ho(\Ccal)$ that is inverted by the right Quillen functor to $\R^c_S \Ccal$ also becomes an equivalence in $\R^c \Dcal$, which is the same as being an equivalence in $\Dcal$, hence it is in $S$. This proves the second half of \ref{Th:RightBousfieldMain:is_universal}.

We now prove point \ref{Th:RightBousfieldMain:is_core_saturated}: Given a core acyclic fibration of $\R^c_S \Ccal$, it is a core fibration of $\Ccal$, and an element of $S$ (we just proved that equivalences in $\R^c_S \Ccal$ are exactly the elements of $S$), so by definition of  $\R^c_S \Ccal$ it is an anodyne fibration of $\R^c_S \Ccal$. It remains to prove \ref{Th:RightBousfieldMain:kinda_same_cof}: but it follows from the dual of \cref{lem:nabla_lift} using the exact dual of the argument for \cref{Th:LeftBousfieldMain}.\ref{Th:LeftBousfieldMain:kinda_same_fib}.
\end{proof}

\begin{remark}\label{Rk:RightSat_Bousfield} Similarly to \cref{Rk:LeftSat_Bousfield}, one defines 

\[ \R_S \Ccal \colon = \R \R^c_S \Ccal \]

\noindent the right saturated Bousfield localization. It satisfies all the condition of \cref{th:RightBousfieldMain} where core right saturation is replaced by right saturation (in condition \ref{Th:RightBousfieldMain:is_core_saturated}, and in the universality property).
\end{remark}

\begin{remark} The exact dual of \cref{rk:Bousfield_on_Ho_cat} also applies here and $Ho(\R_S \Ccal)$ (equivalently $Ho(\R^c_S \Ccal)$) is identified with a coreflective full subcategory of $Ho(\Ccal)$ by the left Quillen functor $Id\colon  \R_S \Ccal \rightarrow \Ccal$.
\end{remark}

This construction of the right Bousfield localization we have given might look very different to these presents in the literature. We propose a slightly different construction to reduce this gap, we only formulate it in the combinatorial case to avoid some complications and make it closer to the classical literature.

\begin{definition} Given a combinatorial weak model category $\Ccal$, a pre-right Bousfield localization of $\Ccal$ is a combinatorial weak model structure on $\Ccal$ which:

\begin{itemize}

\item has the same fibrations as $\Ccal$;

\item has less cofibrations than $\Ccal$.

\end{itemize}

\end{definition}

\begin{prop}\label{prop:pre_right_Bloc} Let $\Ccal$ be a combinatorial weak model category. Let $I$ be a set of core cofibrations of $\Ccal$ such that for each $i \in I$ there exists a choice of $\nabla i$ as in \ref{lem:nabla_lift} which belongs the class generated by $I$ and the anodyne cofibrations of $\Ccal$. Then the class generated by $I$ and the anodyne cofibrations of $\Ccal$ are the cofibrations of a right saturated pre-right Bousfield localization of $\Ccal$. Moreover any right saturated combinatorial pre-right Bousfield localization of $\Ccal$ is obtained this way.
\end{prop}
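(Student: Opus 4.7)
The plan is to verify the two directions by combining the recognition principle of \cref{prop:Model_str_from_3from2} (in its path-object form) with closure properties of relative weak cylinder objects under the generating operations of a weak factorization system.

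\emph{Forward direction.} I would define $\Ccal'$ as the premodel category with the same underlying category and the same (anodyne cofibration, fibration) weak factorization system as $\Ccal$, and with (cofibration, anodyne fibration) weak factorization system cofibrantly generated by $I \cup J$, where $J$ is a set of generating anodyne cofibrations of $\Ccal$. This is combinatorial, has the same fibrations as $\Ccal$ and fewer cofibrations. To show $\Ccal'$ is a weak model category I would apply the dual of \cref{prop:Model_str_from_3from2}: the path object axiom is inherited from $\Ccal$, because every $\Ccal'$-core cofibration is a $\Ccal$-core cofibration, so any $\Ccal$-acyclic fibration is $\Ccal'$-acyclic, and therefore a $\Ccal$-relative strong path object for a $\Ccal'$-core fibration remains one in $\Ccal'$. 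The left cancellation property for $\Ccal'$-acyclic fibrations among $\Ccal'$-core fibrations follows from \cref{lem:nabla_lift} applied to each $\Ccal'$-core cofibration, conditional on the following

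\emph{Key claim:} for every $\Ccal'$-core cofibration $j$ one can choose a relative weak cylinder object in $\Ccal$ whose $\nabla j$ is a cofibration of $\Ccal'$.

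I would prove the key claim by showing that the class of $\Ccal'$-cofibrations admitting such a $\nabla$ is closed under pushouts (the $\nabla$ of a pushout is a pushout of a $\nabla$), transfinite compositions (assembled levelwise), and retracts, and that it contains the generators $I \cup J$: the elements of $I$ by hypothesis, and the anodyne cofibrations in $J$ by factoring their codiagonals directly in $\Ccal'$, whose first leg lies in the cofibrations of $\Ccal'$ by construction.

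The main technical obstacle is right saturation: I must show every $\Ccal'$-acyclic fibration $p$ has the right lifting property against each $i \in I$, even though the domain of $i$ may fail to be $\Ccal'$-cofibrant. The approach I would take is to form a $\Ccal'$-cofibrant replacement $\tilde A \twoheadrightarrow A$ and factor $\tilde A \to B$ in $\Ccal'$ as $\tilde i \colon \tilde A \hookrightarrow \tilde B \twoheadrightarrow B$, so that $\tilde i$ is a $\Ccal'$-core cofibration against which $p$ lifts by $\Ccal'$-acyclicity. To descend the resulting lift back to $i$ itself, I would invoke \cref{lem:nabla_lift} with $p$ and a fibration related to the target of $p$, using the hypothesis that $\nabla i$ is a cofibration of $\Ccal'$ together with the key claim to produce the lift against $\nabla i$ needed to perform the homotopy correction. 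Bridging the gap between $\Ccal'$-core cofibrations and the elements of $I$ is the most delicate step of the proof.

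\emph{Backward direction.} Given a right saturated combinatorial pre-right Bousfield localization $\Ccal'$, pick a combinatorial generating set of the cofibrations of $\Ccal'$ whose domains are $\Ccal'$-cofibrant, and let $I$ be the subset not already contained in $J$; each such $i$ is a core cofibration of $\Ccal'$, and hence of $\Ccal$ since $\Ccal'$-cofibrations are $\Ccal$-cofibrations and $\Ccal'$-cofibrant objects are $\Ccal$-cofibrant. Because $\Ccal'$ is itself a weak model category, the cylinder axiom in $\Ccal'$ furnishes a relative weak cylinder object for each $i \in I$, whose $\nabla i$ is by construction a cofibration of $\Ccal'$, and hence lies in the class generated by $I \cup J$.
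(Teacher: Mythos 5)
Your construction of $\Ccal'$ and the inheritance of the path object axiom match the paper, but the forward direction as written has two unresolved gaps, both of which the paper's (much shorter) argument avoids. First, the step you yourself flag as ``the most delicate'' --- right saturation of $\Ccal'$ --- is never actually carried out: after solving the lifting problem for the replacement $\tilde\imath\colon\tilde A\hookrightarrow\tilde B$ there is no map $B\to\tilde B$ along which to descend the lift, and the promised application of \cref{lem:nabla_lift} is left unspecified, so the proposal does not prove that an acyclic fibration of $\Ccal'$ lifts against $i\in I$. (You have correctly sensed a subtlety here --- the domains of the $i\in I$ need not be $\Ccal'$-cofibrant --- which the paper dispatches in one line: since the generators of the cofibrations of $\Ccal'$ are core cofibrations and anodyne cofibrations, an acyclic fibration lifts against all generators and is therefore anodyne. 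The paper establishes right saturation \emph{first}, before any discussion of cylinders.) Second, your ``Key claim'' is both harder than you indicate and unnecessary. The closure argument is delicate: transfinite composites require \emph{compatible} choices of relative cylinders along the whole chain, retract-closure needs an explicit construction, and for the anodyne generators $j\in J$ --- whose domains need not be cofibrant in either structure --- the assertion that the first leg $B\hookrightarrow I_AB$ of the factored codiagonal is an acyclic cofibration is not automatic: the obvious candidate lift through the retraction $I_AB\to B$ only makes the upper triangle commute, and one cannot invoke $2$-out-of-$3$ for equivalences, which are only defined between fibrant-or-cofibrant objects. The paper sidesteps all of this: with right saturation in hand, $q$ and $q\circ p$ are \emph{anodyne} fibrations, so they lift against $i$ and against the given $\nabla i$ (a $\Ccal'$-cofibration by hypothesis) whether or not these are core in $\Ccal'$; \cref{lem:nabla_lift} then shows $p$ lifts against every $i\in I$, hence against all generators, hence is anodyne, which is exactly what the dual of \cref{prop:Model_str_from_3from2} requires. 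No cylinder needs to be produced for any non-generating cofibration.

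In the backward direction you ``pick a combinatorial generating set of the cofibrations of $\Ccal'$ whose domains are $\Ccal'$-cofibrant''; such a set does not exist for an arbitrary combinatorial premodel structure, and its existence here is precisely the content of the right saturation hypothesis: anodyne fibrations of $\Ccal'$ coincide with acyclic fibrations, i.e.\ with the fibrations having the right lifting property against core cofibrations, so the cofibrations of $\Ccal'$ are generated by its core cofibrations together with its anodyne cofibrations, and one extracts a \emph{set} of core generators via \cref{lem:Constructing_some_WFS}.\ref{lem:Constructing_some_WFS:1}. With that justification supplied, the rest of your backward argument (a relative weak cylinder in $\Ccal'$ is one in $\Ccal$, and its $\nabla i$ is a $\Ccal'$-cofibration) agrees with the paper.
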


\begin{proof}

Let $I$ be a set of core cofibrations as in the statement of the proposition, and let $\Rcal^I \Ccal$ be the combinatorial premodel structure on $\Ccal$ whose anodyne cofibrations are those of $\Ccal$ and whose cofibrations are generated by $I$ and the anodyne cofibrations of $\Ccal$. As the cofibrations of $\Rcal^I \Ccal$ are generated by core cofibrations and anodyne cofibrations it is immediate that it is right saturated. One just need to show that it is a weak model category to obtain that it is a pre-right localization. First one observes that as $\Rcal^I \Ccal$ has the same fibrations and more anodyne fibrations as $\Ccal$, any core fibration have a relative weak path object in $\Rcal^I \Ccal$, which is given by a relative weak path object taken in $\Ccal$. Moreover, it follows from \cref{lem:nabla_lift} that if $q, p$ are composable core fibrations such that $q$ and $q \circ p$ are acyclic (equivalently anodyne) fibrations then $p$ is also an acyclic fibration: indeed for each $i \in I$, as $\nabla i$ is a cofibration of $\Rcal^I \Ccal$ (for some choice of $\nabla i$), one has that $q$ has the lifting property against $\nabla i$ and $q \circ p$ has the lifting property against $i$, hence $p$ has the lifting property against $i$, and so by the dual form of \cref{prop:Model_str_from_3from2}, this concludes the proof that $\Ccal$ is a weak model category.

For the last part of the proposition: given $\Rcal \Ccal$ a right saturated pre-right Bousfield localization of $\Ccal$, then by right saturation its cofibrations are generated by anodyne cofibrations and core cofibrations. Let $I$ be a set of core cofibrations generating all the core cofibrations (it exists by \cref{lem:Constructing_some_WFS}.\ref{lem:Constructing_some_WFS:1}) then for any $i \in I$, $i$ has a relative cylinder objects in $\Rcal \Ccal$, but this is exactly the same as a choice of $\nabla i$ which is a cofibration of $\Rcal \Ccal$, and hence our set $I$ has the property claimed in the proposition.

\end{proof}

\begin{prop}\label{prop:right_loc_of_pre_right_loc} Let $\Rcal \Ccal$ a pre-right localization of $\Ccal$. Consider the right Bousfield localization $\R^c_S \Ccal$ of $\Ccal$ at the class $S$ of arrows inverted in $Ho(\Rcal \Ccal)$. Then $\Rcal \Ccal$ and  $\R^c_S \Ccal$ have the same right core, in particular the same homotopy theory.
\end{prop}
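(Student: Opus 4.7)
The plan is to verify that $\Rcal \Ccal$ and $\R^c_S \Ccal$ share the same fibrations, fibrant objects, core fibrations, and acyclic core fibrations; these four data constitute the right core. To begin, I would observe that $\id \colon \Ccal \to \Rcal \Ccal$ is a right Quillen functor (the fibrations coincide, and the cofibrations of $\Rcal \Ccal$ are contained in those of $\Ccal$, so the left adjoint preserves cofibrations). By \cref{def:right_localizer}, $S$ is therefore an accessible right localizer of $\Ccal$, so $\R^c_S \Ccal$ is defined via \cref{th:RightBousfieldMain}. Both $\Rcal \Ccal$ (by the definition of pre-right Bousfield localization) and $\R^c_S \Ccal$ (by \cref{th:RightBousfieldMain}.\ref{Th:RightBousfieldMain:same_fib}) have the same fibrations as $\Ccal$, hence the same fibrant objects and core fibrations. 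It remains to show that their acyclic core fibrations coincide.

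The key lemma I would use is that in any weak model category $\Dcal$ with the same underlying category as $\Ccal$, a core fibration $p \colon X \twoheadrightarrow Y$ is acyclic if and only if its image in $Ho(\Dcal)$ is invertible. To establish this, pick a cofibrant replacement $r \colon X^c \twoheadrightarrow X$ in $\Dcal$ (an anodyne, hence acyclic, fibration); by the right cancellation of acyclic fibrations amongst fibrations (\cref{lem:2outof6_for_acyclic}) together with closure of acyclic fibrations under composition, $p$ is acyclic iff $pr$ is acyclic. Now $pr$ is a core fibration with cofibrant domain and fibrant codomain, hence between bifibrant-or-cofibrant objects, so by the theorem following \cref{rk:equivalence_implies_WMS}, $pr$ is acyclic iff it is a weak equivalence of $\Dcal$, iff $pr$ is inverted in $Ho(\Dcal)$; and because $r$ is itself inverted in $Ho(\Dcal)$, this happens iff $p$ is inverted in $Ho(\Dcal)$.

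Applying this with $\Dcal = \Rcal \Ccal$ and $\Dcal = \R^c_S \Ccal$: a core fibration $p$ is acyclic in $\Rcal \Ccal$ iff $p$ is inverted in $Ho(\Rcal \Ccal)$, iff $p \in S$ by the definition of $S$ in the statement; and $p$ is acyclic in $\R^c_S \Ccal$ iff $p$ is inverted in $Ho(\R^c_S \Ccal)$, iff $p \in S$ by \cref{th:RightBousfieldMain}.\ref{Th:RightBousfieldMain:is_universal}. The two conditions agree, so the acyclic core fibrations of the two structures coincide, completing the proof. The main technical hurdle is justifying the ``acyclic iff inverted in $Ho$'' equivalence for core fibrations whose source is not a priori bifibrant in the relevant weak model structure; the cofibrant-replacement argument in the second paragraph handles this cleanly, and beyond that, the proof is a direct application of the universal properties already established for $\R^c_S \Ccal$ and for right Bousfield localization.
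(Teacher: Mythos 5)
Your proof is correct and follows essentially the same route as the paper's: both arguments note that the two structures share the (anodyne cofibration, fibration) system of $\Ccal$, and then identify the acyclic core fibrations on each side with the core fibrations of $\Ccal$ lying in $S$ (you via the ``acyclic iff inverted in $Ho$'' characterization together with \cref{th:RightBousfieldMain}.\ref{Th:RightBousfieldMain:is_universal}, the paper by unwinding the construction of $\R^c_S\Ccal$, which amounts to the same thing). The only remark is that your cofibrant-replacement detour is unnecessary, since a core fibration automatically has fibrant domain and target, so the theorem recalled after \cref{intro_localization} applies to it directly; your preliminary check that $S$ is indeed an accessible right localizer is a welcome addition that the paper leaves implicit.
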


\begin{proof} By definition, both $\R^c_S \Ccal$ and $\Rcal \Ccal$ have the same (anodyne cofibrations, fibrations) factorization system as $\Ccal$. By construction, an acyclic\footnote{Or equivalently anodyne by core right saturation of $\Rcal^c_S \Ccal$.} core fibration of $\R^c_S \Ccal$ is exactly a core fibration of $\Ccal$ which is acyclic in $\Rcal \Ccal$, i.e. it is the same as an acyclic core fibration of $\Rcal \Ccal$, so $\R^c_S \Ccal$ and $\Rcal \Ccal$ have the same fibrations and core acyclic fibrations, that is they have the same right core. \end{proof}

\begin{construction} One can now describe the usual construction of a right Bousfield localization as found in the literature (we typically refer to the one in \cite{barwick2010left}): One starts with a class of cofibrant objects $K$, one choose iterated higher codiagonals $\nabla^i k$ for the core cofibrations $\emptyset \hookrightarrow k$ for $k \in K$ and we take $I$ to be the set of all these maps. This set $I$ clearly satisfies the condition of \cref{prop:pre_right_Bloc}, so it generates a pre-right Bousfield localization. We then take the right Bousfield localization inverting the same class of equivalences (which has the same right core, hence the same homotopy category by \cref{prop:right_loc_of_pre_right_loc}).
\end{construction}

Combining the result about Bousfield localizations of weak model categories, with the recognition principle for left and right semi-model categories proved in \cref{sec:WMS_with_cylinder} we obtain the following theorem:

\begin{theorem}\label{th:Bous_Loc_of_semi}

\begin{enumerate}

\item[]

\item\label{th:Bous_Loc_of_semi:left_L_left_S} Any left Bousfield localization $\Lb^c_S \Ccal$ of an accessible left semi-model category $\Ccal$ is also a left semi-model category.

\item\label{th:Bous_Loc_of_semi:right_L_right_S} Any right Bousfield localization $\R^c_S \Ccal$ of an accessible right semi-model category $\Ccal$ is also a right semi-model category.

\item\label{th:Bous_Loc_of_semi:left_L_right_S} Any left saturated left Bousfield localization $\Lb_S \Ccal$ of an accessible right semi-model category $\Ccal$ is also a right semi-model category.

\item\label{th:Bous_Loc_of_semi:right_L_left_S} Any right saturated right Bousfield localization $\R_S \Ccal$ of an accessible left semi-model category $\Ccal$ is also a left semi-model category.

\item\label{th:Bous_Loc_of_semi:two_sidded} If $\Ccal$ is an accessible two-sided model category (for e.g. an accessible Quillen model category), then each of its saturated left or right Bousfield localization $\R_S \Ccal$ and $\Lb_S \Ccal$ is a two-sided model category.

\end{enumerate}

\end{theorem}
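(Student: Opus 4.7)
The plan is to verify, for each of the five statements, the hypotheses of \cref{Th:recog_LMS} or \cref{Th:recog_RMS}. Three structural facts about the Bousfield localizations will do most of the work. (a) $\Lb^c_S\Ccal$ and $\Lb_S\Ccal$ share the cofibrations of $\Ccal$ (\cref{Th:LeftBousfieldMain}.\ref{Th:LeftBousfieldMain:same_cof}) and hence the entire (cofibration, anodyne fibration) weak factorization system, so in particular their anodyne fibrations agree with those of $\Ccal$; dually $\R^c_S\Ccal$ and $\R_S\Ccal$ share the anodyne cofibrations of $\Ccal$. (b) By \cref{Th:LeftBousfieldMain}.\ref{Th:LeftBousfieldMain:kinda_same_fib} and its dual, between objects fibrant (resp.\ cofibrant) in the localization the notion of fibration (resp.\ cofibration) is unchanged. (c) The relevant saturation property transfers from $\Ccal$ to the localization: a core acyclic fibration of $\Lb_S\Ccal$ is, by (a) and (b), a core acyclic fibration of $\Ccal$, so if $\Ccal$ is core right saturated it is anodyne in $\Ccal$, and then anodyne in $\Lb_S\Ccal$ by (a); the same argument handles (core) left saturation dually for $\R_S\Ccal$.

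For parts (1) and (2) the strong cylinder / path objects transfer directly. The identity $\Ccal \to \Lb^c_S\Ccal$ is left Quillen, so by \cref{lem:Qadj_pres_acyclic} it preserves core acyclic cofibrations; any strong cylinder $X \sqcup X \hookrightarrow IX \to X$ of a cofibrant object of $\Ccal$ therefore remains a strong cylinder in $\Lb^c_S\Ccal$. Together with the weak model structure (\cref{Th:LeftBousfieldMain}), the core left saturation built into $\Lb^c_S$, and the right saturation inherited from $\Ccal$ via (c), \cref{Th:recog_LMS} applies and yields a Spitzweck left semi-model structure. Part (2) is formally dual.

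The cross-type cases (3) and (4) are more delicate, because a $\Ccal$-fibration need no longer be a fibration in $\Lb_S\Ccal$ (and dually a $\Ccal$-cofibration need no longer be one in $\R_S\Ccal$). The plan is to upgrade one leg of the relevant factorization in $\Ccal$ from acyclic to anodyne. For (3), given $X$ fibrant in $\Lb_S\Ccal$ (hence in $\Ccal$), take a strong path $X \to PX \twoheadrightarrow X \times X$ in $\Ccal$: the composite $PX \twoheadrightarrow X$ is a core acyclic fibration of $\Ccal$ (its target $X$ is fibrant), and since $\Ccal$ is right semi-model it is core right saturated (\cref{prop:saturation_of_semi_structure} dually), so this composite is in fact anodyne in $\Ccal$. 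By (a) it is then anodyne in $\Lb_S\Ccal$, so $PX$ inherits $\Lb_S\Ccal$-fibrancy from $X$, and (b) promotes the $\Ccal$-fibration $PX \twoheadrightarrow X \times X$ to an $\Lb_S\Ccal$-fibration, yielding a strong path object in $\Lb_S\Ccal$. Combined with core right saturation transferred via (c) and left saturation from the construction of $\Lb_S$, \cref{Th:recog_RMS} completes (3). Part (4) is the exact dual: the first leg of a strong cylinder from $\Ccal$ is upgraded to an anodyne cofibration via core left saturation of $\Ccal$, which makes $IX$ cofibrant in $\R_S\Ccal$, and then \cref{th:RightBousfieldMain}.\ref{Th:RightBousfieldMain:kinda_same_cof} ensures that $X \sqcup X \hookrightarrow IX$ remains a cofibration in $\R_S\Ccal$.

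Finally, part (5) follows by applying (1) and (4) jointly to $\Lb_S\Ccal$ for a two-sided $\Ccal$ (and dually (2) and (3) to $\R_S\Ccal$): the localization then carries both a left and a right semi-model structure and is bi-saturated (left by construction, right transferred from $\Ccal$ via (c)), hence is two-sided in the sense of \cref{def:twosided}. The principal obstacle across the whole argument is the transfer of strong path / cylinder objects in (3) and (4); the essential new idea is the upgrade trick from acyclic to anodyne, exploiting the saturation of $\Ccal$ exactly once on each side to force the intermediate object $PX$ (resp.\ $IX$) to lie in the fibrant (resp.\ cofibrant) part of the localization, so that (b) can conclude.
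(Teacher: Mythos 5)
Your proposal is correct and follows essentially the same route as the paper: transfer of the saturation properties across the localization (your point (c) is exactly \cref{lem:RightSat_LeftLoc}), transfer of strong cylinders/path objects using \cref{Th:LeftBousfieldMain}.\ref{Th:LeftBousfieldMain:same_cof} and \ref{Th:LeftBousfieldMain:kinda_same_fib} (and their duals), with the acyclic-to-anodyne upgrade via the saturation of $\Ccal$ doing the work in the cross-type cases (3) and (4), and then \cref{Th:recog_LMS,Th:recog_RMS}. The only blemish is a labelling slip in part (5): for $\Lb_S\Ccal$ one combines (1) and (3) (and dually (2) and (4) for $\R_S\Ccal$), not (1) and (4).
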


The theorem works for both Fresse and Spitzweck's definition of left/right semi-model categories. In fact, if one uses Fresse's definition there is no longer a need to take saturated Bousfield localization $\Lb ,\R$ in point \ref{th:Bous_Loc_of_semi:left_L_right_S} and \ref{th:Bous_Loc_of_semi:right_L_left_S}: the ordinary unsaturated version $\Lb^c$ and $\R^c$ are enough.

It also follows that in point \ref{th:Bous_Loc_of_semi:left_L_left_S} and \ref{th:Bous_Loc_of_semi:right_L_right_S} a \emph{left saturated} left localization of an accessible left semi-model category is a left semi-model category, and dually.

\begin{proof}  For point \ref{th:Bous_Loc_of_semi:left_L_left_S}: one applies \cref{Th:recog_LMS}, $\Lb^c_S \Ccal$ is right saturated by \cref{lem:RightSat_LeftLoc} below because $\Ccal$ is, and core left saturated by definition of Bousfield localization. Given a cofibrant object $X \in \Lb^c_S \Ccal$, it is a cofibrant object of $\Ccal$, and a strong cylinder object for it in $\Ccal$ is still a strong cylinder object for it $\Lb^c_S \Ccal$ as (acyclic) cofibration in $\Ccal$ are still (acyclic) cofibration in  $\Lb^c_S \Ccal$. The proof of point \ref{th:Bous_Loc_of_semi:right_L_right_S} is the exact dual.  Point \ref{th:Bous_Loc_of_semi:left_L_right_S} is very similar: now $\Lb_S \Ccal$ is core right saturated by \cref{lem:RightSat_LeftLoc} because $\Ccal$ is, and given a fibrant objects $X$ in $\Lb_S \Ccal$, it is also fibrant in $\Ccal$, so as $\Ccal$ is assumed to be a right semi-model category it has a strong path object $PX$ in $\Ccal$. As $PX \rightarrow X$ is an anodyne fibration in $\Ccal$ (hence also in $\Lb_S \Ccal$), it follows that $PX$ is also fibrant in $\Lb_S \Ccal$, and hence applying point \ref{Th:LeftBousfieldMain:kinda_same_fib} of \cref{Th:LeftBousfieldMain} one concludes that $PX$ is again a strong path object in $\Lb_S \Ccal$. By \cref{Th:recog_RMS}, it follows that $\Lb_S \Ccal$ is a right semi-model category when $\Ccal$ is. The proof of point \ref{th:Bous_Loc_of_semi:right_L_left_S} is dual, and point \ref{th:Bous_Loc_of_semi:two_sidded} is a corollary of all the previous points combined.
\end{proof}

Point \ref{th:Bous_Loc_of_semi:right_L_right_S} is essentially C.~Barwick's result on right Bousfield localization proved in \cite{barwick2010left}, Point \ref{th:Bous_Loc_of_semi:left_L_left_S} is stated in \cite{barwick2010left} and its proof can be found in \cite{batanin2020Bousfield}, which also contains many interesting examples of left Bousfield localizations. Point \ref{th:Bous_Loc_of_semi:left_L_right_S},~\ref{th:Bous_Loc_of_semi:right_L_left_S} and \ref{th:Bous_Loc_of_semi:two_sidded} are new. Point \ref{th:Bous_Loc_of_semi:left_L_right_S} is probably the most interesting as left Bousfield localization are an extremely important tool in the theory of model categories and not having it for right semi-model categories was a huge drawback of the notion. Let me mention two applications of \ref{th:Bous_Loc_of_semi:left_L_right_S} that I have in mind:

\begin{itemize}

\item In \cite{Lanari2019homotopy} we needed to consider the left Bousfield localization of the right semi-model category of semi-simplicial sets which model homotopy $n$-type as an intermediate between Grothendieck $n$-groupoids and $n$-truncated spaces. The fact that this right Bousfield localization actually exists was left to be proved in the present paper.

\item In the author work on C.~Simpson semi-strictification conjecture (see \cite{henry2018regular}), all the model structures constructed on non-unital $\infty$-categories and positive polygraphs are either weak model structure or right semi-model structure. I'm hoping that these constructions will be extended to model for $(\infty,n)$-categories and $(\infty,\infty)$-categories in the future, and this requires many left Bousfield localizations, for example $(\infty,n)$-categories should be a left Bousfield localization of $(\infty,\infty)$-categories.
\end{itemize}

Point \ref{th:Bous_Loc_of_semi:right_L_left_S} is mostly here only for aesthetical purpose. I do not really have any concrete application in mind for Point \ref{th:Bous_Loc_of_semi:two_sidded} as the interest of the notion of two-sided model category is still unclear, but it does provide with a good supply of examples of two-sided model categories that are not automatically Quillen model categories, see \cref{ex:non_Quillen_Loc}.

We finish by the following lemma that was needed in the proof of our last theorem.

\begin{lemma}\label{lem:RightSat_LeftLoc} If $\Ccal$ is right saturated, or core right saturated, then its left Bousfield localization $\Lb^c_S \Ccal$ and $\Lb_S \Ccal $ also are. Dually if $\Ccal$ is left saturated, or core left saturated, then its right Bousfield localization $\R^c_S \Ccal$ and $\R_S \Ccal$ also are.
\end{lemma}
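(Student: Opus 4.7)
The plan is to exploit the fact that a Bousfield localization modifies only one of the two weak factorization systems. By \cref{Th:LeftBousfieldMain}.\ref{Th:LeftBousfieldMain:same_cof}, the premodel structure $\Lb^c_S \Ccal$ has exactly the same cofibrations as $\Ccal$, and this is inherited by $\Lb_S \Ccal$ since further left saturation does not modify the cofibrations. Consequently, both $\Lb^c_S \Ccal$ and $\Lb_S \Ccal$ share with $\Ccal$ the same cofibrant objects, the same core cofibrations, and the same (cofibration, anodyne fibration) weak factorization system (so in particular the same anodyne fibrations). Dually, by \cref{th:RightBousfieldMain}.\ref{Th:RightBousfieldMain:same_fib}, the right Bousfield localizations $\R^c_S \Ccal$ and $\R_S \Ccal$ share with $\Ccal$ the same fibrations, the same fibrant objects, the same core fibrations, and the same anodyne cofibrations.

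Given these invariants, the left Bousfield case goes as follows. Assume $\Ccal$ is right saturated and let $f$ be an acyclic fibration in $\Lb^c_S \Ccal$. Since the identity $\Ccal \to \Lb^c_S \Ccal$ is left Quillen, its right adjoint identity preserves fibrations, so $f$ is in particular a fibration in $\Ccal$; and $f$ has the right lifting property against the core cofibrations of $\Lb^c_S \Ccal$, which coincide with the core cofibrations of $\Ccal$. Therefore $f$ is an acyclic fibration in $\Ccal$, hence by right saturation an anodyne fibration in $\Ccal$, and finally an anodyne fibration in $\Lb^c_S \Ccal$ by the first paragraph. For the core variant, one only needs the additional remark that a fibrant object of $\Lb^c_S \Ccal$ is also fibrant in $\Ccal$ (since the fibrations of $\Lb^c_S \Ccal$ are a subclass of those of $\Ccal$), so the same argument transports a core acyclic fibration back to $\Ccal$ before invoking core right saturation. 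The argument for $\Lb_S \Ccal$ is identical.

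The dual statement for right Bousfield localizations is completely symmetric and follows by the same line of reasoning, reducing any acyclic cofibration of $\R^c_S \Ccal$ (resp.\ $\R_S \Ccal$) to an acyclic cofibration of $\Ccal$ and then invoking left saturation. The one subtle point to verify is that cofibrant objects of $\R^c_S \Ccal$ agree with those of $\Ccal$, which holds because a map out of the initial object vacuously has the left lifting property against everything, so the extra lifting condition cut out in the construction of the cofibrations of $\R^c_S \Ccal$ is trivial on $\emptyset \to X$. I do not foresee any real obstacle: once one pins down which weak factorization system is left untouched by the localization, the lemma is a short definition chase, and the most delicate bookkeeping is simply making sure the notions of core (co)fibration and (co)fibrant object transport correctly between the two premodel structures.
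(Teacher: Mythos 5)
Your argument for the left Bousfield case is essentially the paper's own proof: both rest on the observation that $\Lb^c_S\Ccal$ has the same cofibrations (hence the same anodyne fibrations and the same acyclicity test for fibrations) as $\Ccal$, so a (core) acyclic fibration of the localization is already a (core) acyclic fibration of $\Ccal$ and right saturation of $\Ccal$ finishes the job. The paper handles $\Lb_S\Ccal$ by citing that the saturation functor $\Lb$ preserves (core) right saturation (\cref{th:main_saturation}); your direct repetition of the argument works equally well since $\Lb$ does not change the cofibrations.

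However, your treatment of the dual case contains a genuine error. You claim that the cofibrant objects of $\R^c_S\Ccal$ \emph{agree} with those of $\Ccal$ because ``a map out of the initial object vacuously has the left lifting property against everything.'' This is false: $\emptyset\to X$ has the left lifting property against $p\colon E\twoheadrightarrow B$ exactly when every map $X\to B$ lifts through $p$, which is a nontrivial condition --- indeed, the cofibrant objects of $\R^c_S\Ccal$ are precisely the colocal ones, and shrinking this class is the entire point of right Bousfield localization. Fortunately your proof does not actually need the equality: since the cofibrations of $\R^c_S\Ccal$ form a subclass of those of $\Ccal$, a cofibrant object of $\R^c_S\Ccal$ is cofibrant in $\Ccal$, and this one inclusion (together with the fact that the two structures have the same fibrations, hence the same fibrant objects, anodyne cofibrations, and acyclicity test for cofibrations) is all the dual argument requires. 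You should delete the false claim and its justification and replace it with this inclusion.
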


\begin{proof}
The proof of the two statement are exact duals, so we only treat left localizations. The case of $\Lb_S \Ccal$ follows from the case of $\Lb^c_S \Ccal$ and the fact (see \cref{th:main_saturation}) that the left saturation construction $\Lb$ preserves (core) right saturation. Now assume that $\Ccal$ is (core) right saturated. If $f$ is a (core) acyclic fibration in $\Lb^c_S \Ccal$, then it is also a (core) fibration in $\Ccal$, and as the two have the same cofibrations, it is still acyclic in $\Ccal$, hence by (core) right saturation of $\Ccal$ it is an anodyne fibration in $\Ccal$. As $\Ccal$ and  $\Lb^c_S \Ccal$ have the same cofibrations, they also have the same anodyne fibrations, so this concludes the proof.
\end{proof}

\appendix

\section{Locally $\kappa$-presentable categories}
\label{sec:lkp_cat}

The general goal of these two appendices is to record some results about combinatorial\footnote{Usually called ``cofibrantly generated''. We avoided this terminology as both accessible and combinatorial are defined by cofibrant generation conditions. See \cref{def:lambda_comb_wfs}.} and accessible weak factorization systems on locally presentable categories. Except the accessible case of \cref{th:appendix_Main}.\ref{th:appendix_Main:inf_wfs}, all the results presented here appear in some form in the literature, and we will try to give credit to the original source as much as we can. Often though the existing literature only contains these results in weaker form that do not specify the value of the accessibility rank (the cardinals $\lambda$ and $\kappa$) and here we will provide more precise version that do. In most cases, this only involves keeping track of the accessibility ranks through some already existing proof.

\Cref{sec:lkp_cat} regroup results about locally presentable categories and \cref{sec:apendix_awfs} regroup results about combinatorial and accessible weak factorization systems.

I am grateful to Ji\v{r}\'i Rosick\'y, John Bourke and Ivan Di Liberti for several helpful discussions related to the contents of these two appendices as well as for pointing out many references to me. The proof of the accessible case of \cref{th:appendix_Main}.\ref{th:appendix_Main:inf_wfs} below is due to John Bourke (personal communications), and Ji\v{r}\'i Rosick\'y also gave a different proof of the same result independently (also personal communications).

\begin{notation}\label{not:kappa_adjunction}
In what follows $\lambda$ and $\kappa$ always denote regular cardinals. We usually use $\kappa$ for an \emph{uncountable} regular cardinal and $\lambda$ for an arbitrary regular cardinal. We will say that an adjunction:
\[
\begin{tikzcd}
 \Ccal \ar[r,phantom,"\bot"description] \ar[r,bend left =35,"L"] & \Acal \ar[l,bend left =35,"R"] \\ 
\end{tikzcd}
\]
  is a \emph{$\lambda$-adjunction} if $\Ccal$ and $\Acal$ are both locally $\lambda$-presentable and if the following equivalent conditions are satisfied:

\begin{itemize}
\item The left adjoint functor $L$ preserves $\lambda$-presentable objects (i.e. is strongly $\lambda$-accessible).
\item The right adjoint functor $R$ preserves $\lambda$-filtered colimits (i.e. is $\lambda$-accessible).
\end{itemize}

We will also say that $L$ is a $\lambda$-left adjoint and that $R$ is a $\lambda$-right adjoint.

\end{notation}

\begin{theorem}\label{th:pres_rank} Let $\lambda$ and $\kappa$ be regular cardinals, with $\kappa$ uncountable.

\begin{enumerate}

\item\label{th:pres_rank:acc_limits} The bicategory of accessible categories and accessible functors between them has all small $\cat$-weighted pseudo-limits, and they are preserved by the forgetful functor to $\cat$.


\item\label{th:pres_rank:right_adj_limits} The bicategory of locally $\lambda$-presentable categories and $\lambda$-right adjoints between them has all small $\cat$-weighted pseudo-limits and they are preserved by the forgetful functor to $\cat$.


\item\label{th:pres_rank:left_adj_limits} The bicategory of locally $\kappa$-presentable category and $\kappa$-left adjoints between them has all $\kappa$-small $\cat$-weighted pseudo-limits and they are preserved by the forgetful functor to $\cat$.

\item\label{th:pres_rank:(co)algebras} Let $\Ccal$ be a locally $\kappa$-presentable category endowed with $T$ a $\kappa$-accessible endofunctor, (co)pointed endofunctor, or (co)monad. Then the category of $T$-(co)algebras is again locally $\kappa$-presentable and the adjunctions:

\[ L \colon  \Ccal \leftrightarrows T\text{-Alg} \colon  U \qquad U \colon  T\text{-Coalg} \leftrightarrows \Ccal \colon  R  \]
 
are $\kappa$-adjunctions. In the case of category of algebras this also works for $\kappa = \omega$.

\end{enumerate}
\end{theorem}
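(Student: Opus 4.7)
The plan is to establish the four parts in order, invoking classical results for (1) and leveraging them for (2), (3), and (4).

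Part (1) is a foundational result that I would cite from Makkai and Par\'e's book on accessible categories. The proof proceeds via sketch theory: every accessible category is the category of models of a sketch, and the pseudo-limit in $\cat$ of a small diagram of accessible categories and accessible functors can be presented as the category of models of an assembled larger sketch, hence is again accessible; the limit projections are automatically accessible. For part (2), a $\lambda$-right adjoint preserves all limits and $\lambda$-filtered colimits, so given a small diagram $F : J \to \cat$ of locally $\lambda$-presentable categories and $\lambda$-right adjoints, the pseudo-limit $\Ccal$ in $\cat$ has all limits, created jointly by the projections $p_j$. Part (1) then gives that $\Ccal$ is accessible, and since a complete accessible category is locally presentable, only $\lambda$-presentability and the claim that each $p_j$ is a $\lambda$-right adjoint remain. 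The special adjoint functor theorem supplies left adjoints $\ell_j \dashv p_j$; by direct inspection these send $\lambda$-presentable objects of $F(j)$ to $\lambda$-presentable objects of $\Ccal$, so collecting the $\ell_j$-images of $\lambda$-presentable generators of the $F(j)$ provides a generating family of $\lambda$-presentable objects in $\Ccal$.

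Part (3) is the principal obstacle, because left adjoints do not automatically commute with the formation of limits. The strategy is a Gabriel--Ulmer-type reduction. A $\kappa$-left adjoint $L : \Ccal \to \Dcal$ between locally $\kappa$-presentable categories restricts to a $\kappa$-cocontinuous functor $L^\kappa : \Ccal^\kappa \to \Dcal^\kappa$ between the essentially small subcategories of $\kappa$-presentable objects, and $L$ is recovered uniquely as the left Kan extension of $L^\kappa$ along $\Ccal^\kappa \hookrightarrow \Ccal$. This promotes to a biequivalence between the 2-category of locally $\kappa$-presentable categories with $\kappa$-left adjoints and the 2-category of essentially small $\kappa$-cocomplete categories with $\kappa$-cocontinuous functors. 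The latter admits $\kappa$-small $\cat$-weighted pseudo-limits computed in $\cat$: a $\kappa$-small pseudo-limit of $\kappa$-small categories is $\kappa$-small (this uses that $\kappa$ is uncountable, so $\kappa$-small sets are closed under $\kappa$-small colimits), and $\kappa$-small colimits in the pseudo-limit are formed pointwise by $\kappa$-cocontinuity of the transition functors. Applying $\mathrm{Ind}_\kappa$ transports such a pseudo-limit back to the original 2-category, and a direct comparison on $\kappa$-presentable generators shows that the resulting object agrees, up to equivalence, with the $\cat$-pseudo-limit of the original diagram, so the forgetful functor preserves it.

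For (4), I would realise $T\text{-Alg}$ and $T\text{-Coalg}$ as specific $\kappa$-small pseudo-limits. For an endofunctor $T$, the category of $T$-algebras is the inserter of $T, \mathrm{id}_\Ccal : \Ccal \rightrightarrows \Ccal$; the (co)pointed and (co)monad cases add further equifiers encoding the unit and associativity axioms. Since $T$ is $\kappa$-accessible, the defining diagrams can be arranged so that their transition functors lie in the appropriate 2-category: for algebras one applies part (2), and for coalgebras one dualises and applies part (3). The forgetful functor arises as a projection of the pseudo-limit and is therefore a $\kappa$-right adjoint in the algebra case and a $\kappa$-left adjoint in the coalgebra case, yielding the required $\kappa$-adjunction. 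The extra case $\kappa = \omega$ for algebras is handled separately by the classical observation that filtered colimits in $T\text{-Alg}$ are created by the forgetful functor whenever $T$ is finitary, giving local finite presentability directly without needing part (1) or (2) at the countable rank.
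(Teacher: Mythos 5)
Parts (1)--(3) are, in the paper, essentially citations (Makkai--Par\'e for (1), Bird's thesis for (2) and (3)), and your sketches for them are plausible reconstructions of the standard arguments. One caveat on (3): the subcategories $\Ccal^\kappa$ of $\kappa$-presentable objects are essentially small but not $\kappa$-small, so the closure property of $\kappa$-small sets is not where uncountability of $\kappa$ is really used; the delicate point in the Gabriel--Ulmer reduction is identifying the $\kappa$-presentable objects of the pseudo-limit with the componentwise $\kappa$-presentable ones, and that is where the hypothesis enters. Since the paper delegates this to Bird's Proposition 3.14, I will not press the point.

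The genuine gap is in part (4). You propose to realise $T\text{-Alg}$ as the inserter of $T,\id_\Ccal \colon \Ccal \rightrightarrows \Ccal$ (and $T\text{-Coalg}$ dually) and then invoke part (2) (resp.\ part (3)). But parts (2) and (3) apply only to diagrams whose transition functors are $\lambda$-right adjoints (resp.\ $\kappa$-left adjoints), and a general $\kappa$-accessible endofunctor $T$ is neither; the phrase ``the defining diagrams can be arranged so that their transition functors lie in the appropriate 2-category'' is exactly the step that cannot be carried out. The only part of the theorem that applies to an inserter along a merely accessible $T$ is part (1), and --- as the paper explicitly warns --- that yields accessibility of the (co)algebra category with \emph{no control over the presentability rank}, which is the whole content of (4). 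The paper's actual route is different: it first treats plain endofunctor (co)algebras by direct arguments that avoid weighted limits entirely --- for algebras, monadicity of the forgetful functor plus Bird's result that free algebras on $\kappa$-presentables are a dense family of $\kappa$-presentable algebras (this is also what makes $\kappa=\omega$ work, and is the argument your final sentence would still need); for coalgebras, Ad\'amek's theorem that coalgebras for a $\lambda$-accessible endofunctor form a $\lambda$-accessible category with presentability detected on underlying objects. Only \emph{then} does it use equifiers and pseudo-pullbacks for the (co)pointed and (co)monad cases, and crucially the transition functors there are the \emph{forgetful} functors $T_0\text{-Coalg} \to \Ccal$ (now known to be $\kappa$-left adjoints) rather than $T$ itself, so part (3) genuinely applies. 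Your proof needs this bootstrap; without it the base case of (4) is unproved.
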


Most these results are well known and the proof will mostly point to references.

\begin{remark} \label{rk:PIE_limits} In \cref{th:pres_rank}, by ``$\cat$-weighted pseudo-limits'' we refer to the fact that as all the categories mentioned are bicategories, they can be considered as enriched (in an appropriate bicategorical sense) over the bicategory $\cat$ of categories, and hence we can consider the usual enriched notion of limits. These limits for bicategories have been considered by many authors under many different names\footnote{These notions have often be defined in the context of strict $2$-categories as special case of strict $2$-limits, and from this point of view are not equivalent. They are however all equivalent when considered in the bicategorical context.}: for example ``indexed limits of retract type'' in \cite{gregorybird}, or simply ``Limits'' in \cite{makkai1989accessible}, $2$-limits, bilimits, flexible limits, or PIE-limits. This last name comes from the fact that this class of limits is generated by Products, Inserters and Equifiers. Product is simply the usual notion of possibly infinity products. Given $F,G: \Ccal \rightrightarrows \Dcal$, the inserter $\text{Ins}(F,G) \rightarrow \Ccal$ is universal for $1$-arrows to $\Ccal$ endowed with a $2$-arrows $F \circ H \rightarrow G \circ H$, and given $\alpha,\beta: F \rightrightarrows G$ as before, the equifier $\text{Eq}(\alpha,\beta) \rightarrow \Ccal$ is universal for $1$-arrows $H$ to $\Ccal$ such that the $2$-arrow $\alpha \circ H, \alpha \circ H : F \circ H \rightrightarrows G \circ H$ are equals.

Such limits are called $\kappa$-small (for a regular cardinal $\kappa$) if both the indexing diagram is $\kappa$-small and the weight is levelwise $\kappa$-small. These $\kappa$-small $\cat$-weighted pseudo-limits are generated by $\kappa$-small products, inserters and equifiers. Pseudo-pullbacks and inverters are special cases of finite PIE or $\cat$-weighted pseudo-limits. \end{remark}

\begin{proof}[Proof of \cref{th:pres_rank}]
Point \ref{th:pres_rank:acc_limits} appears as Theorem 5.1.6 of \cite{makkai1989accessible} or in Section 2.H of \cite{adamek1994locally} for the case of lax limits, together with Exercise 2.m for inverters which together gives all weighted pseudo-limits. \ref{th:pres_rank:right_adj_limits} and \ref{th:pres_rank:left_adj_limits} are due to G.~Bird in his PhD thesis \cite{gregorybird}, as Theorem $2.17$ and Proposition $3.14$.

For \ref{th:pres_rank:(co)algebras} the usual argument goes as follows: in all cases, the category of (co)algebra can be written as a certain weighted limit of accessible categories and accessible functors between them (namely, $T$ and its powers), and hence it is accessible by \ref{th:pres_rank:acc_limits}. Moreover in categories of (co)algebra (co)limits are created by the forgetful functor to $\Ccal$, hence if $\Ccal$ is (co)complete, the category of (co)algebra is also (co)complete. In both case an accessible category which is either complete or cocomplete is locally presentable. But unfortunately, this argument gives no control over the presentability rank. We will need a different proof.

For the case of the category of algebras over a monad it is easy to show that the free algebra over $\kappa$-presentable objects form a dense family of $\kappa$-presentable algebras and hence that the category is indeed locally $\kappa$-presentable (even if $\kappa$ is countable). A detailed argument appears, for example, as Theorem 6.9 of \cite{gregorybird}. This also shows that the free algebra functor preserve $\kappa$-presentable objects, or equivalently it follows from the fact that as the monad preserve $\kappa$-filtered colimits, that the forgetful functor preserves $\kappa$-filtered colimits.

For categories of algebras over a $\kappa$-accessible endofunctor or pointed endofunctor one can simply observe that the forgetful functor $T$-Alg$ \rightarrow \Ccal$ is monadic,  as it satisfies all the assumption of Beck's monadicity criterion, and preserves $\kappa$-filtered colimits as $T$ does, because the forgetful functor $T$-Alg $\rightarrow \Ccal$ preserves all colimits that are preserved by $T$, and its left adjoint preserves all colimits. It follows that the category of $T$-algebras is the category of algebras for a $\kappa$-accessible monad, so that one can conclude the proof by using the previous case.  All of this applies without any problem even if $\kappa$ is countable.

We move to the case of coalgebras. The case of coalgebras over a $\kappa$-accessible endofunctor is treated in \cite{adamek2004tree} as Theorem 4.2, which more generally says that given a $\lambda$-accessible endofunctor $F$ on a $\lambda$-accessible category that admits colimits of $\omega$-chains, the category of $F$-coalgebras is $\lambda$-accessible and an $F$-coalgebra is $\lambda$-presentable if and only if its underlying object is $\lambda$-presentable.

Applying this to a locally $\lambda$-presentable category immediately gives that the category of coalgebras is locally $\lambda$-presentable and cocomplete (as the forgetful functor creates colimits) and hence is locally $\lambda$-presentable. Moreover the forgetful functor preserves (and detects) $\lambda$-presentable objects, so the adjunction is indeed a $\lambda$-adjunction.

It remains to prove the result in the case where $T$ is a copointed endofunctor or a comonad. Let $T=(T_0,\epsilon\colon T_0 \rightarrow \id)$ be a $\lambda$-accessible copointed endofunctor on a $\lambda$-presentable category $\Ccal$.

The category of $T$-coalgebras is the equifier:

\[
\begin{tikzpicture}[scale=1.5]
\node (M) at (-2,0) {$T$-Coalg};
\node (S) at (0,0) {$(T_0$-Coalg$)$};
\node (T) at (3,0) {$\Ccal$};

\draw[->] (M) to (S);

\draw[->,bend left=40] (S) to node[above]{$U$} node[below,name=Top]{$\quad$} (T);
\draw[->,bend right=40] (S) to node[below]{$U$} node[above,name=Bot]{$\quad$} (T);

\draw[double equal sign distance,->,-implies,bend left = 30,xshift=1cm] (Top.south east) to node[fill=white]{$\epsilon'$} (Bot.north east);
\draw[double equal sign distance,->,-implies,bend right = 30] (Top.south west) to node[fill=white]{$\id_U$} (Bot.north west);
\end{tikzpicture}
\]

where $U$ denotes the forgetful functor, and $\epsilon'$ is the natural transformation which on each $F$ algebra $(X,s:X \rightarrow F(X))$ is $\epsilon'_X = \epsilon_X \circ s : X \rightarrow X$. The $T$ algebras are the $T_0$-algebra $X$ such that $\epsilon'_X = \id$ which is just what this equifier captures. Hence the result follows from \ref{th:pres_rank:left_adj_limits} as $U$ is a left adjoint functor preserving $\kappa$-presentable objects. 

If now $T=(T_0,\epsilon,\eta)$ is a comonad, one has to adjust the argument a bit as this time we want to equify two natural transformations $X \rightrightarrows F^2(X)$, but $F^2(X)$ does not have to be a left adjoint functor preserving $\kappa$-presentable objects. We hence consider $\Ecal$ the category whose objects are triples, consisting of an object $Y \in \Ccal$ together with two arrows $Y \rightrightarrows T^2 Y$.  The category $\Ecal$ can be seen as the category of coalgebras for the $\kappa$-accessible endofunctor $Y \mapsto (T \circ T)(Y)^2$ and hence by the previous result is locally $\kappa$-presentable, with such a triple being $\kappa$-presentable if and only if $Y$ is.

We denote by $T_1$ the copointed endofunctor $(T_0,\epsilon)$.
There is a natural functor $Z\colon  T_1$-Coalg $\rightarrow \Ecal$ sending each $T_1$-coalgebra $(X,\nu)$ to the two morphisms $X \rightarrow T^2(X)$ whose equality witness that $X$ is a $T$-coalgebra.
$Z$ preserves all colimits and $\kappa$-presentable objects, hence is a $\kappa$-left adjoint.
Finally, we also have a $\kappa$-left adjoint functor $T^2$-Coalg $\rightarrow \Ecal$, which sends a coalgebra $\eta:X \rightarrow T^2(X)$ to $(X,\eta,\eta)$ and the category of $T$-Coalg appears as the pseudo-pullback:

\[
\begin{tikzcd}
  T\text{-Coalg} \ar[r] \ar[d] & T_1\text{-Coalg} \ar[d,"Z"] \\
T^2\text{-Coalg}  \ar[r] & \Ecal 
\end{tikzcd}
\]

And this concludes the proof by \ref{th:pres_rank:left_adj_limits}.

\end{proof}

\begin{lemma}\label{lem:monadic_limit} Let $\Ccal_i,\Dcal_i \colon  I \rightarrow \cat$  be two (pseudo) diagrams of categories, and let $V\colon \Ccal_i \rightarrow \Dcal_i$ be a pseudo-natural transformation which is levelwise monadic. Then the functor:

\[ \lim V\colon   \lim_{i \in I} \Ccal_i \rightarrow \lim_{i \in I} \Dcal_i, \]

\noindent where the limits are pseudo-limits weighted by some $I \rightarrow \cat$, is monadic if and only if it has a left adjoint.

\end{lemma}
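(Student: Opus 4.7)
The ``only if'' direction is immediate, since any monadic functor is by definition a right adjoint. For the ``if'' direction, I would apply Beck's crude monadicity theorem: assuming $\lim V$ already has a left adjoint, it suffices to check that $\lim V$ is conservative and that it creates coequalizers of $\lim V$-split pairs. Two structural facts will do all the work: (a) the projections $p_i : \lim \Ccal_i \to \Ccal_i$ (and $\lim \Dcal_i \to \Dcal_i$) are jointly conservative, and the data of an object or morphism in a pseudo-limit is determined up to canonical isomorphism by its projections together with coherent comparison isomorphisms; (b) split coequalizers are absolute, hence preserved by every functor.

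For conservativity, if $f$ is a morphism in $\lim \Ccal_i$ with $(\lim V)(f)$ invertible in $\lim \Dcal_i$, then by (a) each $V_i(p_i f) = p_i (\lim V)(f)$ is invertible in $\Dcal_i$; since $V_i$ is monadic, hence conservative, each $p_i f$ is invertible in $\Ccal_i$, and then by (a) again $f$ is invertible in $\lim \Ccal_i$.

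For the creation of $\lim V$-split coequalizers, let $u,v : X \rightrightarrows Y$ be a parallel pair in $\lim \Ccal_i$ such that its image under $\lim V$ admits a split coequalizer $q : (\lim V)(Y) \to Z$ in $\lim \Dcal_i$. Projecting to each $i \in I$ gives a split coequalizer $q_i : V_i p_i(Y) \to p_i(Z)$ of $(V_i p_i u, V_i p_i v)$ in $\Dcal_i$. By the monadicity of $V_i$, this lifts uniquely to a coequalizer $\widetilde{q}_i : p_i(Y) \to \widetilde{Z}_i$ in $\Ccal_i$ with $V_i(\widetilde{q}_i) = q_i$ and $V_i(\widetilde{Z}_i) = p_i(Z)$. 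To assemble the $\widetilde{Z}_i$'s into an object of $\lim \Ccal_i$, one takes the comparison isomorphisms of $Z$ in $\lim \Dcal_i$ and lifts them through the $V_j$'s: for each structural arrow in the weighted diagram, pseudo-naturality of $V$ together with absoluteness of split coequalizers (fact (b)) shows that the required comparison in $\Dcal_j$ is already an isomorphism between the images under $V_j$ of the relevant lifted objects, and conservativity of $V_j$ upgrades this to a canonical isomorphism in $\Ccal_j$. Coherence of these isomorphisms follows from the uniqueness half of the monadicity of each $V_j$. The resulting object $\widetilde{Z} \in \lim \Ccal_i$ satisfies $(\lim V)(\widetilde{Z}) = Z$ and provides a coequalizer of $(u,v)$ which is sent by $\lim V$ to the original split coequalizer; uniqueness of such a lift is the same argument run on isomorphisms.

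The main (but essentially bureaucratic) obstacle is verifying coherence of the assembled object in an arbitrary pseudo-limit. This can be circumvented by reducing to the PIE presentation of pseudo-limits (see \cref{rk:PIE_limits}): conservativity and creation of split coequalizers are trivial for products (both hold componentwise), and for inserters and equifiers they follow from exactly the absoluteness argument above applied to the single structural $1$- or $2$-cell. The general case then follows formally since PIE-limits are generated under composition by these three classes. No accessibility or size hypothesis intervenes anywhere.
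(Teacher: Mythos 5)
Your proposal is correct and follows essentially the same route as the paper: both verify the hypotheses of Beck's monadicity theorem by projecting a $\lim V$-split pair to a $V_i$-split pair at each level, lifting its coequalizer through the monadic $V_i$, and using the absoluteness of split coequalizers in $\Dcal_j$ together with the conservativity of $V_j$ to see that the transition functors preserve the lifted coequalizers, so that they assemble into a coequalizer in the pseudo-limit preserved by $\lim V$. (One cosmetic remark: what you invoke is the precise monadicity theorem --- conservativity plus creation of coequalizers of split pairs --- not the \emph{crude} one, but the argument you actually carry out is the correct one.)
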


\begin{proof}

We check that $\lim V$ always satisfies all the (others) conditions of the Beck monadicity criterion. Conservativity is clear. Given a $\lim V$-split pair in $\lim_{i \in I} \Ccal_i$, its projection to each $\Ccal_i$ (for each $i \in I$ and $x \in W(I)$) is a $V_i$-split pair, hence, as $V_i$ is monadic, it has a coequalizer in $\Ccal_i$ which is preserved by $V_i$. We claim that these coequalizers are preserved by all the transition maps $\Ccal_i \rightarrow \Ccal_j$ (for all maps in $I$ and in the weight), indeed in the naturality square:

\[
\begin{tikzcd}
  \Ccal_i \ar[r] \ar[d] & \Ccal_j \ar[d,"V_j"] \\ 
\Dcal_i \ar[r] & \Dcal_j \\
\end{tikzcd}
\]

The two vertical maps preserves these coequalizers, and the lower horizontal arrows also preserves them because in $\Dcal_i$ they are split coequalizers and hence preserved by all functors. It follows that natural map in $\Ccal_j$ between the coequalizer computed in $\Ccal_j$ and the image of the coequalizer computed in $\Ccal_i$ is sent by $U_j$ to an isomorphism, but as $U_j$ is conservative this proves that the coequalizer in $\Ccal_i$ is preserved by the transition map to $\Ccal_j$. It hence follows that there is a coequalizer in $\lim \Ccal_i$ and that it is preserved by all the functors to the $\Ccal_i$, hence by all the functor to $\Dcal_i$, and hence also by the functor to $\lim \Dcal_i$.
\end{proof}

\begin{cor}\label{cor:fiber_prod_comonadic} For any uncountable regular cardinal $\kappa$, a pseudo-pullback of a comonadic $\kappa$-left adjoint functor along a $\kappa$-left adjoint functor (between locally $\kappa$-presentable categories) is again a comonadic $\kappa$-left adjoint functor. Also, given a pseudo-pullback:

\[ \prod_\Acal \Ccal_i \]

of a family of comonadic $\kappa$-left adjoint functors $\Ccal_i \rightarrow \Acal$ (between locally $\kappa$-presentable categories), the functor $\prod_\Acal \Ccal_i \rightarrow \Acal$ is a comonadic $\kappa$-left adjoint.
\end{cor}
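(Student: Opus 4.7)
The plan is to combine (the comonadic dual of) \cref{lem:monadic_limit} with \cref{th:pres_rank}.\ref{th:pres_rank:left_adj_limits}: the lemma reduces comonadicity of a limit-projection to the existence of an adjoint, which the theorem on $\kappa$-small pseudo-limits in the bicategory of $\kappa$-left adjoints supplies automatically. Before starting I would note that \cref{lem:monadic_limit} dualizes in an entirely routine way: its proof verifies Beck's monadicity criterion using that $V_i$-split pairs have absolute coequalizers preserved by all transition maps, and the dual asks instead for $V_i$-cosplit pairs to have equalizers preserved by transition maps, which is the same argument read upside down (cosplit coforks are absolute limits).

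For the binary pseudo-pullback, let $F\colon \Ccal\to\Acal$ be a comonadic $\kappa$-left adjoint and $G\colon \Dcal\to\Acal$ a $\kappa$-left adjoint. I would let $I$ be the cospan category $\bullet\to\bullet\leftarrow\bullet$ and consider two $I$-diagrams in $\cat$, namely $\Ccal_\bullet=(\Ccal\xrightarrow{F}\Acal\xleftarrow{G}\Dcal)$ and $\Dcal_\bullet=(\Acal\xrightarrow{\id}\Acal\xleftarrow{G}\Dcal)$, together with the pseudo-natural transformation $V$ with components $(F,\id,\id)$. Then $\lim\Ccal_\bullet=\Ccal\times_\Acal\Dcal$ and $\lim\Dcal_\bullet\simeq\Dcal$, while $\lim V$ is exactly the projection $\Ccal\times_\Acal\Dcal\to\Dcal$. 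Since $F$ is comonadic and identities are trivially so, $V$ is levelwise comonadic. Because the cospan is finite (hence $\kappa$-small, as $\kappa$ is uncountable), \cref{th:pres_rank}.\ref{th:pres_rank:left_adj_limits} and \ref{th:pres_rank:acc_limits} guarantee that both pseudo-limits are locally $\kappa$-presentable, are computed in $\cat$, and that $\lim V$ is itself a $\kappa$-left adjoint; in particular it has a right adjoint. The dual of \cref{lem:monadic_limit} then gives that the projection $\Ccal\times_\Acal\Dcal\to\Dcal$ is comonadic, which is the desired statement. That the composite $\Ccal\times_\Acal\Dcal\to\Acal$ through $G$ is a $\kappa$-left adjoint follows from the theorem cited above.

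The wide-pullback statement follows from the same recipe applied to the indexing diagram with objects $I\sqcup\{*\}$ and morphisms $i\to *$: take $\Ccal_\bullet$ built from the $F_i\colon \Ccal_i\to\Acal$, $\Dcal_\bullet$ with $\Acal$ at every node and identities as transition maps, and $V$ with components $(F_i,\id)$. Then $\lim V$ is the projection $\prod_\Acal\Ccal_i\to\Acal$, and exactly as before, levelwise comonadicity together with the existence of a right adjoint (supplied by \cref{th:pres_rank}.\ref{th:pres_rank:left_adj_limits}, provided the indexing diagram is $\kappa$-small, which means interpreting ``family'' as $\kappa$-small; otherwise one enlarges $\kappa$) yields comonadicity.

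The main obstacle I foresee is purely notational rather than mathematical: one must set up the two diagrams and the natural transformation $V$ in exactly the right way so that $\lim V$ computes the desired projection, and then check once and for all that \cref{lem:monadic_limit} admits its straightforward comonadic dual. Once that bookkeeping is done, both assertions reduce immediately to the combination of \cref{lem:monadic_limit} and \cref{th:pres_rank}, with no new ideas required.
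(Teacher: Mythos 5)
Your proof is correct and follows essentially the same route as the paper's: both apply the dual of \cref{lem:monadic_limit} to exactly the cospan (resp.\ wide cospan) of diagrams you describe, with \cref{th:pres_rank}.\ref{th:pres_rank:left_adj_limits} supplying local $\kappa$-presentability of the limit and the existence of the required adjoint. Your explicit caveat that the family in the second statement should be $\kappa$-small (or $\kappa$ enlarged) is a reasonable reading of the hypothesis ``given a pseudo-pullback'' and matches how the corollary is actually used in the proof of \cref{th:appendix_Main}.\ref{th:appendix_Main:inf_wfs}.
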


Note that in general composites of comonadic left adjoint functors are not always comonadic, so that the second part, even for finite families cannot be deduced directly from the first one.

\begin{proof}
In both cases, the existence of the limits and the fact that its structural maps are $\kappa$-left adjoints follows from Point \ref{th:pres_rank:left_adj_limits} of \cref{th:pres_rank}. Once we know that the functors involved are left adjoints, their comonadicity follows from the dual of \cref{lem:monadic_limit} applied to the natural transformation:

\[
\begin{tikzcd}
  \Bcal \ar[r] \ar[d] & \Acal \ar[d,equal ] & \Ccal \ar[l] \ar[d,equal] \\
\Acal \ar[r,equal] & \Acal & \Ccal \ar[l] \\
\end{tikzcd}
\]

where $\Bcal \rightarrow \Acal$ is comonadic, so that the resulting comparison map is $\Bcal \times_\Acal \Ccal \rightarrow \Ccal$. For the second part of the corollary, we apply \cref{lem:monadic_limit} to the natural transformation which is the identity on $\Acal$ and $\Ccal_i \rightarrow \Acal$ for all $i$.
\end{proof}

\section{Accessible and combinatorial weak factorization systems}
\label{sec:apendix_awfs}

Let $I$ be a set of arrows in a category $\Ccal$. We denote by $I^\wlp$ the class of arrows that have the right lifting property against $I$, and by $\cof(I)$ the retracts of transfinite composition of pushout of maps in $I$. We recall: 

\begin{prop}[Small object argument] Let $\Ccal$ be a co-complete category, and let $I$ be a set of arrows in $\Ccal$. Assume that $\Ccal$ is locally presentable, or more generally that for every $i\colon A \rightarrow B \in I$, the functor:

\[ \Hom(A, \_) \colon  \Ccal \rightarrow \set \]

commutes with $\lambda$-directed transfinite composites of pushouts of maps in $I$ for some regular cardinal $\lambda$. Then $(\cof(I),I^\wlp)$ is a weak factorization system on $\Ccal$.

\end{prop}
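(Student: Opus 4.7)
The plan is to carry out Quillen's small object argument, keeping track of the precise smallness hypothesis stated in the proposition. The argument has three distinct components: (i) showing $\cof(I)$ has the left lifting property against $I^\wlp$; (ii) constructing the factorization; and (iii) closing up by the retract lemma.

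For (i) I will use the standard observation that the class of maps having the left lifting property against a fixed class is always closed under pushouts, transfinite composition, and retracts. Since every arrow in $I$ has the left lifting property against $I^\wlp$ by definition, it follows that $\cof(I)$ does as well.

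For (ii), the construction. Given $f\colon X \to Y$, I build transfinitely a tower $X = X_0 \to X_1 \to \cdots \to X_\alpha \to \cdots$ over $Y$ as follows. At successor stages $X_\alpha \to X_{\alpha+1}$, I take the pushout of the set of all commutative squares
\[
\begin{tikzcd}
A \ar[d, "i"{swap}] \ar[r] & X_\alpha \ar[d] \\
B \ar[r] & Y
\end{tikzcd}
\]
indexed by pairs $(i\in I, \text{square})$, with $B$'s coproducted on the left. At limit stages I take the colimit. Each map $X_\alpha \to X_{\alpha+1}$ is a pushout of a coproduct of maps in $I$, hence in $\cof(I)$. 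I stop at the ordinal $\lambda$ (from the hypothesis) and set $X \to X_\lambda \to Y$. The first map is in $\cof(I)$ as a transfinite composite of $\cof(I)$ maps; it remains to show the second is in $I^\wlp$. Given a lifting problem $A \to X_\lambda$ with $A\to B\in I$, the smallness hypothesis guarantees that $A\to X_\lambda$ factors through some $A\to X_\alpha$ for $\alpha<\lambda$, and the corresponding lifting problem at stage $\alpha$ was solved by construction at stage $\alpha+1$, yielding the required diagonal filler $B \to X_{\alpha+1} \to X_\lambda$.

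For (iii), the retract closure argument: to verify $\cof(I)$ is exactly the class of maps having the left lifting property against $I^\wlp$, take $f$ with that lifting property, factor it as $f = p\circ i$ with $i\in\cof(I)$ and $p\in I^\wlp$ using (ii); the retract lemma (arising from the lift in the square $f = p\circ i$, $\id = \id$) then exhibits $f$ as a retract of $i$, so $f\in \cof(I)$. The main subtlety in the proof is to verify that the smallness hypothesis, formulated precisely in terms of $\lambda$-directed composites of pushouts of maps in $I$, is exactly what is needed for the factorization step: any factorization through some $X_\alpha$ suffices, and these maps $X_\alpha \to X_\lambda$ are indeed directed transfinite composites of pushouts of coproducts of maps of $I$, which in turn reduce (by cofinality / reindexing) to the type of composites covered by the hypothesis. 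The locally presentable case follows because every object is $\mu$-presentable for some $\mu$, so one can take $\lambda = \max\{\mu_A : (A\to B)\in I\}^+$.
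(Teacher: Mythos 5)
Your proof is correct and is exactly the standard Quillen small object argument that the paper is recalling here without proof (the proposition is stated as a classical fact, so there is no in-paper argument to diverge from). You handle the two genuine delicacies properly: regularity of $\lambda$ makes the length-$\lambda$ chain $\lambda$-directed so that maps out of $A$ factor through a stage, and the reindexing of pushouts of coproducts into cofinal transfinite composites of pushouts of single maps of $I$ is what lets the stated smallness hypothesis apply.
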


We now recall the algebraic small object argument, due to R.~Garner in \cite{garner2009understanding}. We consider this time a small category $I$ of arrows in $\Ccal$, i.e. a small category $I$ with a functor $I \rightarrow \Ccal^\rightarrow$ to the arrow category of $\Ccal$. One says that an arrow $f$ in $\Ccal$ has the algebraic lifting property against $I$, and write $f \in I^\alp$, if for each (solid) lifting problem as in the diagram on the left below, it is endowed with a chosen (dotted) diagonal filler:

\[
\begin{tikzcd}
  A \ar[d,"i \in I"swap] \ar[r,"u"] & X \ar[d,"f"] \\
B \ar[r,"v"swap] \ar[ur,dotted,"\tau(i{,u,}v)"description] & Y \\ 
\end{tikzcd} \qquad 
\begin{tikzcd}
A_1 \ar[d,"i_1"description] \ar[r,"k"] &  A_2 \ar[d,"i_2"description] \ar[r,"u"] & X \ar[d,"f"] \\
B_1 \ar[r,"h"] \ar[urr,dotted] & B_2 \ar[r,"v"swap] \ar[ur,dotted] & Y \\ 
\end{tikzcd}
\]

such that moreover, given a diagram as on the right above where the left square is the image of a morphism in $I$, the two diagonal fillers are compatible, that is $\tau(i_2,u,v) \circ h = \tau(i_1,uk,vh)$.

We denote by $I^\alp$ the category whose objects are the arrows in $\Ccal$ that are endowed with such compatible choice of lift, and whose morphisms are the squares compatible to these chosen lift. Though very often we also denote by $I^\alp$ the class of arrows that admits such a structure. Garner's version of the small object argument can be stated as:

\begin{prop}
Let $\Ccal$ be a locally presentable category and $I$ a small category of arrows in $\Ccal$. Then the forgetful functor $I^\alp \rightarrow \Ccal^\rightarrow$ is a monadic right adjoint functor. Moreover given a general arrow $f \in \Ccal$ the unit of the adjunction has the form:

\[
\begin{tikzcd}
 X \ar[d,"f"] \ar[r,"Lf"] & E \ar[d,"Rf"] \\
Y \ar[r,equal] & Y \\  
\end{tikzcd}
\]

where $Rf \in I^\alp$ and $Lf$ has the left lifting property against all arrows in the image of $I^\alp$.
\end{prop}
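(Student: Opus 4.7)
The plan is to first construct a left adjoint to $U\colon I^\alp \to \Ccal^\rightarrow$ by Garner's algebraic small object argument, and then invoke Beck's monadicity theorem. The factorization statement will fall out of the unit of the adjunction.

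First I would build a pointed endofunctor $(T,\eta)$ on $\Ccal^\rightarrow$ whose algebras are exactly objects of $I^\alp$. Given an arrow $f\colon X\to Y$, let $I/f$ denote the category whose objects are commutative squares from some $i\in I$ to $f$ and whose morphisms are morphisms in $I$ compatible with the data of the square. Because $I$ is small, $I/f$ is small, and the assignments $(i,u,v)\mapsto \mathrm{dom}(i)$ and $(i,u,v)\mapsto\mathrm{cod}(i)$ define functors $I/f\to\Ccal$ connected by a canonical natural transformation and equipped with canonical cocones to $X$ and $Y$ respectively. Setting
\[ A_f = \colim_{(i,u,v)\in I/f} \mathrm{dom}(i), \qquad B_f = \colim_{(i,u,v)\in I/f} \mathrm{cod}(i), \]
one obtains a factorization $X\to X\cup_{A_f}B_f \to Y$, which we declare to be $Tf$, together with a canonical map $\eta_f\colon f\to Tf$ in $\Ccal^\rightarrow$. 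The key point, and the reason one takes a colimit over the full category $I/f$ rather than a coproduct over its objects, is that coalgebras for the associated copointed endofunctor already encode lift choices that are compatible with morphisms of $I$, which is exactly the coherence condition defining $I^\alp$.

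Next I would take the free monad on $(T,\eta)$. Since $\Ccal$ is locally presentable, there exists $\lambda$ such that every domain and codomain of an arrow of $I$ is $\lambda$-presentable; one checks that $T$ is then a $\lambda$-accessible pointed endofunctor on the locally $\lambda$-presentable category $\Ccal^\rightarrow$. By Kelly's transfinite construction of free monads on accessible pointed endofunctors (the algebraically well-founded version of the small object argument), $T$ generates a free monad $T^{\infty}$ whose category of algebras is canonically equivalent to the category of coalgebras of the copointed endofunctor underlying $T$, namely $I^\alp$. This provides the left adjoint to $U$, and the monadic comparison functor is the above equivalence, giving monadicity. Alternatively, Beck's theorem applies directly: $U$ creates all limits (since limits in $I^\alp$ are computed pointwise, the lift structure being pulled back from that of the target of the limit cone) and reflects isomorphisms (an inverse in $\Ccal^\rightarrow$ of an $I^\alp$-morphism automatically respects the chosen lifts), and we have just produced a left adjoint.

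Finally, the unit $\eta^\infty_f\colon f\to T^\infty f$ is, by construction of Kelly's transfinite iteration, a factorization of $f$ as $X \xrightarrow{Lf} E \xrightarrow{Rf} Y$ where $Rf$ carries a $T^\infty$-algebra structure, hence $Rf\in I^\alp$. That $Lf$ has the left lifting property against every arrow in the image of $I^\alp$ is the standard transfinite induction along the stages of the free-monad construction: at each stage $Lf$ is built by iterated pushouts of colimits of maps in $I$, and given any $g\in I^\alp$, its algebra structure furnishes coherent lifts against each of the generating squares, which assemble (using the universal properties of the colimits and pushouts, together with the coherence built into the $I^\alp$-structure) into a lift against $Lf$. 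The main obstacle throughout is precisely this coherence issue: a naive coproduct-based construction produces lifts but loses compatibility with morphisms of $I$, which is why the colimit-over-$I/f$ formulation and Garner's free-monad refinement of Quillen's argument are essential.
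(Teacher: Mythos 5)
This proposition is not proved in the paper at all: it is recalled from Garner's \emph{Understanding the small object argument}, so there is no in-paper argument to compare against. Your reconstruction follows Garner's actual proof in its essential structure: the one-step factorization obtained by pushing out the colimit over the comma category $I/f$ (rather than a coproduct over its objects, which is exactly where the coherence with morphisms of $I$ is encoded), Kelly's transfinite free-monad construction on the resulting accessible pointed endofunctor of $\Ccal^\rightarrow$, and the identification of the algebras of the free monad with the objects of $I^\alp$. Two local corrections are needed. First, $I^\alp$ is the category of \emph{algebras for the pointed endofunctor} $f \mapsto \bigl(X\cup_{A_f}B_f \to Y\bigr)$: an algebra structure is a retraction $X\cup_{A_f}B_f \to X$ under $X$ and over $Y$, i.e.\ precisely a coherent choice of fillers. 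You twice describe it as the category of \emph{coalgebras for the associated copointed endofunctor}, which is the wrong side of the factorization --- those are the left maps. With that fixed, monadicity of $U$ follows because Kelly's free monad is \emph{algebraically} free, so its category of algebras is equivalent, over $\Ccal^\rightarrow$, to the category of algebras of the pointed endofunctor, namely $I^\alp$; this is the correct route and renders your ``alternative'' appeal to Beck unnecessary. Second, that alternative as stated would not go through: creating limits and reflecting isomorphisms is not among the hypotheses of Beck's theorem --- you would need $U$ to create (or at least preserve) coequalizers of $U$-split pairs, which you do not address. The final claim about $Lf$ is fine: each stage of the construction is a pushout of a colimit of maps of $I$, and the coherence condition on an object of $I^\alp$ is exactly what allows its chosen fillers to assemble over that colimit and then pass to pushouts and transfinite composites.
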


So in particular one obtains a weak factorization system whose right class are the arrows in the image of $I^\alp$ (this class is already closed under retract) and the left class are the arrow with the weak left lifting property against them. As we will recall very soon, this is in fact the underlying weak factorization system of an algebraic (or natural) weak factorization system as considered in \cite{grandis2006natural}, \cite{garner2009understanding}  and \cite{bourke2016algebraic}.

\begin{definition}\label{def:lambda_comb_wfs} A weak factorization system on a category $\Ccal$ is said to be \emph{$\lambda$-combinatorial} (resp. \emph{$\lambda$-accessible}) if:

\begin{itemize}

\item $\Ccal$ is locally $\lambda$-presentable.

\item Its right class is $I^\wlp$ (resp. $I^\alp$) for $I$ a set (resp. a small category) of arrows between $\lambda$-presentable objects of $\Ccal$.

\end{itemize}

It is said to be combinatorial (resp. accessible) if it is $\lambda$-combinatorial (resp. $\lambda$-accessible) for some $\lambda$.

\end{definition}

Usually, an accessible weak factorization system is defined in terms of the existence of an accessible factorization functor. \cref{th:acc_wfs_eq_def} below (mostly due to J.~Rosick\'y in \cite{rosicky2017accessible}) show that for an uncountable $\lambda$ this is equivalent to our definition. As previous references only defined the notion of accessible weak factorization system (and not $\lambda$-accessible) our terminology is not conflicting with pre-existing literature. The reason why we did not use the more usual definition, stated as condition \ref{th:acc_wfs_eq_def:accessible} of \cref{th:acc_wfs_eq_def}, is explained in \cref{rk:changin_the_def_of_accessible_wfs}.

\begin{remark}\label{Rk:accessible_and_algebraic_factorization}
By an \emph{accessible factorization functor}, we mean an accessible functor $(\Lb,\Eb,\Rb) \colon \Ccal^\rightarrow \rightarrow \Ccal^{\rightarrow \rightarrow}$ which sends every arrow $f \in \Ccal^\rightarrow$ to a factorization:

\[
\begin{tikzcd}
  X \ar[rr,"f"] \ar[dr,"\Lb(f)"swap] & & Y \\
 & \Eb (f)  \ar[ur,"\Rb(f)"swap] & \\ 
\end{tikzcd}
\]

The proof of the small object argument (in both versions) immediately show that the weak factorization system obtained from it admits such a functorial factorization. But it has been observed (see \cite{garner2009understanding}) that the factorization constructed from the small object argument have considerably more structure: they are ``\emph{algebraic weak factorization systems}'' (first introduced by Grandis and Tholen in \cite{grandis2006natural} under the name natural weak factorization systems, though the definition has been strengthened a bit by Garner in \cite{garner2009understanding}). This means essentially that the functors $\Lb$ and $\Rb \colon  \Ccal^\rightarrow \rightarrow \Ccal^\rightarrow$ are respectively a comonad and a monad that interact in a nice way. We refer to \cite{garner2009understanding} for the precise definition. We have: 
\end{remark}

\begin{theorem}\label{th:acc_wfs_eq_def} Let $\Ccal$ be a locally $\lambda$-presentable category, and $(L,R)$ a weak factorization system on $\Ccal$, consider the following conditions:

\begin{enumerate}[label=(\Alph*)]

\item\label{th:acc_wfs_eq_def:A_Garner_generated} $(L,R)$ is $\lambda$-accessible in the sense of \cref{def:lambda_comb_wfs}, i.e. it is generated under Garner's small object argument by a small category of arrows between $\lambda$-presentable objects.

\item\label{th:acc_wfs_eq_def:algebraic} $(L,R)$ is the underlying weak factorization system of a $\lambda$-accessible algebraic weak factorization system on $\Ccal$.

\item\label{th:acc_wfs_eq_def:accessible} $(L,R)$ admits a $\lambda$-accessible functorial factorization.

\end{enumerate}

Then \ref{th:acc_wfs_eq_def:A_Garner_generated} $\Rightarrow$ \ref{th:acc_wfs_eq_def:algebraic} $\Rightarrow$ \ref{th:acc_wfs_eq_def:accessible} and if $\lambda$ is uncountable all three conditions are equivalent.

\end{theorem}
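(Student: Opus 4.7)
The implication (B) $\Rightarrow$ (C) is immediate from the definitions: an algebraic weak factorization system has as underlying data a functorial factorization, and $\lambda$-accessibility of the awfs entails $\lambda$-accessibility of this factorization functor. For (A) $\Rightarrow$ (B), the strategy is to track the accessibility rank through Garner's small object argument. Given a small category $I$ of arrows between $\lambda$-presentable objects, Garner's construction builds the free awfs on $I$ as a transfinite iteration whose basic steps are coproducts indexed by lifting problems out of $I$-morphisms, followed by pushouts along them. Because each generator has $\lambda$-presentable (co)domains, these elementary operations commute with $\lambda$-filtered colimits in $\Ccal^{\rightarrow}$, and a standard rank computation shows the iteration stabilises on each input before stage $\lambda$. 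A straightforward induction on the stages then yields that the resulting factorization functor $\Eb$, and therefore the comonad $\Lb$ and monad $\Rb$, are $\lambda$-accessible.

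The bulk of the work is (C) $\Rightarrow$ (A), where the uncountability of $\lambda$ is essential. Starting from a $\lambda$-accessible functorial factorization $(\Lb, \Eb, \Rb)$, regard $\Lb$ as a copointed endofunctor on $\Ccal^{\rightarrow}$, with counit $\Lb \Rightarrow \id_{\Ccal^{\rightarrow}}$ given by the bottom halves of the factorization squares. The arrow category $\Ccal^{\rightarrow}$ is locally $\lambda$-presentable, with $\lambda$-presentable objects exactly the arrows between $\lambda$-presentable objects of $\Ccal$, and $\Lb$ is a $\lambda$-accessible endofunctor on it. Therefore \cref{th:pres_rank}.\ref{th:pres_rank:(co)algebras} applies and guarantees, precisely under the uncountability hypothesis, that the category $\Lb\text{-Coalg}$ of coalgebras for this copointed endofunctor is locally $\lambda$-presentable and that its forgetful functor $U \colon \Lb\text{-Coalg} \to \Ccal^{\rightarrow}$ is a $\lambda$-left adjoint. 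In particular $U$ preserves $\lambda$-presentable objects, so the full subcategory $I \subset \Lb\text{-Coalg}$ on $\lambda$-presentable coalgebras is essentially small and $U(I)$ consists of arrows between $\lambda$-presentable objects of $\Ccal$; this will be my candidate generating category.

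To finish, I would verify that $I^{\alp} = R$. In one direction, the counit of any coalgebra structure $(i,\theta)$ exhibits $i$ as a retract of $\Lb(i) \in L$, so $U(I) \subset L$; any $f \in R$ then admits, canonically and coherently in $I$-morphisms, lifts against arrows in $U(I)$ by transporting a lifting problem through the factorization of $f$ and then reinterpreting the filler via the coalgebra structure. Conversely, if $f$ has algebraic lifts against $I$, then since every $\Lb$-coalgebra is a $\lambda$-filtered colimit of objects in $I$ and $U$ preserves such colimits, these lifts extend naturally to all of $\Lb\text{-Coalg}$; applying the result to $\Lb(f)$ equipped with its canonical coalgebra structure produces a retraction exhibiting $f$ as a retract of $\Rb(f)$, so $f \in R$. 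The only genuine obstacle is the local $\lambda$-presentability of $\Lb\text{-Coalg}$, which is already packaged into \cref{th:pres_rank}.\ref{th:pres_rank:(co)algebras}; this is precisely where the hypothesis that $\lambda$ is uncountable intervenes, and indeed the coalgebra case of that theorem breaks at $\lambda=\omega$, which accounts for the asymmetry in the statement.
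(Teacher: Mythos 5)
Your proof follows essentially the same route as the paper's: (B)$\Rightarrow$(C) is immediate, (A)$\Rightarrow$(B) by tracking the accessibility rank through Garner's construction (the paper simply defers to Prop.~4.22 of Garner together with \cref{th:pres_rank}.\ref{th:pres_rank:(co)algebras}), and (C)$\Rightarrow$(A) via the copointed endofunctor $\Lb$, local $\lambda$-presentability of $\Lb$-Coalg from \cref{th:pres_rank}.\ref{th:pres_rank:(co)algebras} (which is exactly where uncountability enters), the $\lambda$-presentable coalgebras as generators, and the density argument that algebraic lifts against these extend to all coalgebras --- a step the paper outsources to Lemma 24 of Bourke--Garner but which you sketch correctly. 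The only minor imprecision is the claim that Garner's iteration ``stabilises on each input before stage $\lambda$'': convergence of the free-monad sequence is subtler than that, but what actually matters, and what holds, is that each stage and hence the resulting comonad and monad are $\lambda$-accessible.
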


This result is essentially due to J.~Rosick\'y as the main result of \cite{rosicky2017accessible}, which exactly states that condition \ref{th:acc_wfs_eq_def:A_Garner_generated} for some $\lambda$ is equivalent to condition \ref{th:acc_wfs_eq_def:accessible} for some (possibly different) $\lambda$, and the argument in the proof below is essentially the same as in \cite{rosicky2017accessible}.

I believe that in the case $\lambda = \omega$ both implications are strict, but I'm not aware of a known counter-example.

\begin{proof}

\ref{th:acc_wfs_eq_def:A_Garner_generated} $\Rightarrow$ \ref{th:acc_wfs_eq_def:algebraic} is essentially proved within the proof of Proposition 4.22 of \cite{garner2009understanding}:  The pointed endofunctor $L$ on $\Ccal^\rightarrow$ constructed there is such that the category of $L$-algebras is exactly $I^\alp$. They also show (within the proof) that if each arrow in $I$ is between $\lambda$-presentable object then $L$ is $\lambda$-accessible. By point (\ref{th:pres_rank:(co)algebras}) of \cref{th:pres_rank} it follows that the adjunction between $I^\alp$ and $\Ccal^\rightarrow$ is a $\lambda$-adjunction, and hence as the algebraic weak factorization system generated by $I$ is given by the unit of this adjunction, it is indeed a $\lambda$-accessible weak factorization system.

\smallskip

 \ref{th:acc_wfs_eq_def:algebraic} $\Rightarrow$ \ref{th:acc_wfs_eq_def:accessible} is clear.

\smallskip

It remains to show that when $\lambda$ is uncountable we have \ref{th:acc_wfs_eq_def:accessible} $\Rightarrow$ \ref{th:acc_wfs_eq_def:A_Garner_generated}, this is done by following the argument of \cite{rosicky2017accessible}, and we refer to \cite{rosicky2017accessible} for the details. We consider a $\lambda$-accessible functorial factorization for $(L,R)$, i.e. a $\lambda$-accessible functor $(\mathbb{L},\Eb,\mathbb{R})\colon \Ccal^\rightarrow \rightarrow \Ccal^{\rightarrow \rightarrow}$ as in \ref{Rk:accessible_and_algebraic_factorization}. We define the category of \emph{cloven $L$-maps} to be the category of coalgebras for the copointed endofunctor $\mathbb{L}\colon \Ccal^\rightarrow \rightarrow \Ccal^\rightarrow$, whose copointing is given by:

\[
\begin{tikzcd}
\bullet \ar[d,"\Lb f"] \ar[r,equal] & \bullet \ar[d,"f"] \\
\bullet \ar[r,"\Rb f"] & \bullet \\
\end{tikzcd}
\]

A standard retract argument shows that a map admits a structure of cloven $L$-map if and only if it is in $L$. By \cref{th:pres_rank}.\ref{th:pres_rank:(co)algebras}, as $\Lb$ is $\lambda$-accessible (and $\lambda$ is uncountable) the category of cloven $L$-maps, i.e. $\Lb$-Coalg, is locally $\lambda$-presentable, with the adjunction with $\Ccal^\rightarrow$ being a $\lambda$-adjunction. We let $I$ to be the essentially small full subcategory of $\Lb$-coalgebras of $\lambda$-presentable arrows, which is in particular a (essentially) small category of arrows between $\lambda$-presentable object in $\Ccal$. It follows from Lemma 24 of \cite{bourke2016algebraic} that $I^\alp = (\Lb\text{-Coalg})^\alp$. We claim that this implies that the weak factorization system generated by $I$ is $(L,R)$. Indeed any map in $I^\alp$ has the lifting property against all $\Lb$-coalgebras, i.e. all $L$-maps, and conversely any $R$-map admits a cloven $R$-map structure (i.e. is an algebra for the pointed endofunctor $\Rb$). But the lifting of a cloven $L$-map against a cloven $R$-map can be constructed explicitly in terms of the cloven structure and the functorial factorization, and hence is functorial in both morphisms of cloven $L$-maps and morphisms of cloven $R$-maps. It follows that any (cloven) $R$-map is in $I^\alp$.\end{proof}

\begin{remark}\label{rk:Bourke_Garner_Beck} Given an algebraic weak factorization system $(\Lb,\Rb)$ as mentioned in \cref{Rk:accessible_and_algebraic_factorization} on a category $\Ccal$, we have categories of $\Lb$-maps and $\Rb$-maps defined respectively as the categories of $\Lb$-coalgebras and $\Rb$-algebras. They come with functors $\Lb \rightarrow \Ccal^\rightarrow$ and $\Rb \rightarrow \Ccal^\rightarrow$. One of the main results of \cite{bourke2016algebraic} is a complete characterization of functors that arises this way. This characterization uses the concept of double categories (i.e. a category object in the category of categories). There is a double category of squares in $\Ccal$ whose category of objects is $\Ccal$ and whose category of morphisms is $\Ccal^\rightarrow$, and for any algebraic weak factorization system, one has a double categories whose category of object is $\Ccal$ and whose category of morphisms is the category of $\Lb$-maps (resp. $\Rb$-maps). The double categorical composition operation encodes the operations of composition of $\Lb$-maps and of $\Rb$-maps.

It is proved in \cite{bourke2016algebraic} (as Theorem 6, see also Theorem 3.6 of \cite{garner2020lifting}) that $\Lcal \rightarrow \Ccal^\rightarrow$ is the category of left arrows of an algebraic weak factorization system if and only it is the action on morphism of a double functor (morphism of double category) such that:

\begin{itemize}
\item Its action on objects is an isomorphism of categories.

\item Its action on morphisms, that is $\Lcal \rightarrow \Ccal^\rightarrow$, is (strictly) comonadic.

\item For any arrow $A \rightarrow B$ in $\Ccal$ that lifts to an object of $\Lcal$, there is a morphism to it in $\Lcal$ whose image in $\Ccal^\rightarrow$ is the square:

\[
\begin{tikzcd}
  A \ar[r,equal] \ar[d,equal] & A \ar[d,"f"] \\
 A \ar[r,"f"] & B \\
\end{tikzcd}
\]

\end{itemize}

The exact dual characterization of classes of right arrows also holds. Note that, for $\kappa$ uncountable, such an algebraic weak factorization system is $\kappa$-accessible as soon as the functor $\Lcal \rightarrow \Ccal^\rightarrow$ is a $\kappa$-left adjoint: indeed the factorization is given by the comonad associated to this left adjoint functor, so if its right adjoint is $\kappa$-accessible, hence the factorization is $\kappa$-accessible.

\end{remark}

\begin{notation}\label{nota:before_main_th}

\begin{itemize}

\item[]

\item Weak factorization systems on a category $\Ccal$ will be considered ordered by inclusion of their left classes. I.e. $(L,R) \leqslant (L',R')$ if $L \subset L'$ or equivalently if $R' \subset R$.

\item Given $(L_i,R_i)_{i\in I}$ a family of weak factorization systems on a category $\Ccal$ an infimum (resp. a supremum), if it exists, is a weak factorization system on $\Ccal$ whose left class (resp. right class) is $\cap L_i$ (resp. $\cap R_i$). These are indeed supremum and infimum in the sense of the order of the previous point.

\item Given $V\colon \Ccal \rightarrow \Dcal$ a (generally right, resp. left adjoint) functor and $(L,R)$ a weak factorization system on $\Dcal$. A right (resp. left) transfer of $(L,R)$ along $V$ is a weak factorization system on $\Ccal$ whose right (resp. left) class is $V^{-1}(R)$ (resp. $V^{-1}(L)$).

\item We define the following four bicategories:

 \[ \PC{R}{\lambda}{Comb} \quad \PC{R}{\lambda}{Acc} \quad \PC{L}{\lambda}{Comb} \quad \PC{L}{\lambda}{Acc} \]

whose objects are locally $\lambda$-presentable categories endowed with either $\lambda$-combinatorial or $\lambda$-accessible weak factorization systems and whose morphisms are either ``Quillen'' $\lambda$-right adjoint functor or ``Quillen'' $\lambda$-left adjoint functors, where a Quillen adjunction is an adjunction whose left adjoint preserves the left classes of the weak factorization system, or equivalently whose right adjoint preserves the right class of the weak factorization system.

\item We denote by $\cat_M$ the bicategory of categories endowed with a class of marked arrows (closed under isomorphisms) and functors preserving marked arrows. Each of the four categories introduced above has a forgetful functor to $\cat_M$ that send each category with a weak factorization system to the same category endowed with its left class for $\PC{L}{\lambda}{Comb}$ and $\PC{L}{\lambda}{Acc}$, and its right class for $\PC{R}{\lambda}{Comb}$ or $\PC{R}{\lambda}{Acc}$. These are functorial as left Quillen functor preserves left classes and right Quillen functor preserves right classes.

\end{itemize}

\end{notation}

\begin{theorem}\label{th:appendix_Main} Let $\kappa$ be an uncountable regular cardinal and $\lambda$ an arbitrary regular cardinal.

\begin{enumerate}

\item\label{th:appendix_Main:sup_wfs} The supremum of a family of $\lambda$-accessible (resp. $\lambda$-combinatorial) weak factorization systems exists and is $\lambda$-accessible (resp. $\lambda$-combinatorial).

\item\label{th:appendix_Main:inf_wfs} The infimum of a $\kappa$-small family of $\kappa$-accessible (resp. $\kappa$-combinatorial) weak factorization systems exists and is $\kappa$-accessible (resp. $\kappa$-combinatorial).

\item\label{th:appendix_Main:right_transfer} The right transfer of a $\lambda$-accessible (resp. $\lambda$-combinatorial) weak factorization system along a $\lambda$-right adjoint exists and is $\lambda$-accessible (resp. $\lambda$-combinatorial).

\item\label{th:appendix_Main:left_transfer} The left transfer of a $\kappa$-accessible (resp. $\kappa$-combinatorial) weak factorization system along a $\kappa$-left adjoint exists and is $\kappa$-accessible (resp. $\kappa$-combinatorial).

\item\label{th:appendix_Main:right_limits} The bicategories $\PC{R}{\lambda}{Comb}$ and $\PC{R}{\lambda}{Acc}$ have all small $\cat$-weighted pseudo-limits and they are preserved by the forgetful functor to $\cat_M$.

\item\label{th:appendix_Main:left_limits} The bicategories $\PC{L}{\kappa}{Comb}$ and $\PC{L}{\kappa}{Acc}$ have all $\kappa$-small $\cat$-weighted pseudo-limits and they are preserved by the forgetful functor to $\cat_M$.

\end{enumerate}

\end{theorem}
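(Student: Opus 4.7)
The plan is to establish the six points in the order (3), (1), (4), (2), (5), (6), with the last two derived by combining the earlier transfer and suprema/infima constructions.

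For point (3), if $V \colon \Ccal \to \Dcal$ is a $\lambda$-right adjoint with left adjoint $L^*$, and $(L,R)$ is a $\lambda$-accessible or $\lambda$-combinatorial wfs on $\Dcal$ generated by a small category (resp. set) $I$ of arrows between $\lambda$-presentable objects, then the usual adjunction argument shows $f \in V^{-1}(R)$ iff $L^*(I)$ lifts against $f$ (algebraically in the accessible case). Since $L^*$ preserves $\lambda$-presentable objects, $L^*(I)$ generates the desired transferred wfs with the required presentability. For point (1), given a family $(L_i, R_i)_{i \in J}$, pick small generating $I_i$ for each; the union $I = \bigsqcup_i I_i$ is essentially small, since the class of arrows between $\lambda$-presentable objects in $\Ccal$ is essentially small. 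The wfs cofibrantly generated by $I$ has right class $\bigcap_i R_i$ as required, and is $\lambda$-accessible (resp. $\lambda$-combinatorial) by construction.

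The heart of the argument is point (4) in the accessible case, with point (2) being entirely analogous. Represent the $\kappa$-accessible wfs on $\Dcal$ by its algebraic version, so we have a comonadic $\kappa$-left adjoint $\Lcal_D \to \Dcal^{\rightarrow}$ as in Remark \ref{rk:Bourke_Garner_Beck}. The induced functor $V^{\rightarrow} \colon \Ccal^{\rightarrow} \to \Dcal^{\rightarrow}$ is again a $\kappa$-left adjoint, so by Corollary \ref{cor:fiber_prod_comonadic} the pseudo-pullback $\Lcal_C := \Lcal_D \times_{\Dcal^{\rightarrow}} \Ccal^{\rightarrow}$ is comonadic and $\kappa$-left adjoint over $\Ccal^{\rightarrow}$; this is the step that requires uncountability of $\kappa$. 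The remaining conditions of Bourke-Garner (action on objects an isomorphism, and the existence of the canonical degenerate-square morphism for each liftable arrow) are straightforwardly inherited by the pullback, so $\Lcal_C$ is the category of left maps of an algebraic wfs on $\Ccal$, whose underlying left class is exactly $V^{-1}(L)$. Point (2) is obtained identically by replacing $V^{\rightarrow}$ with the identity and forming the $\kappa$-small pseudo-pullback $\prod_{\Ccal^{\rightarrow}} \Lcal_i$ of the comonadic $\kappa$-left adjoints coming from the family; existence and comonadicity follow from the second half of Corollary \ref{cor:fiber_prod_comonadic}, and the resulting wfs has left class $\bigcap_i L_i$. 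The combinatorial subcases of (2) and (4) need the further observation that the comonads in question inherit a small set of generating arrows from the input data, which can be read off the pullback construction.

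Points (5) and (6) are then formal. For a small $\cat$-weighted pseudo-diagram in $\PC{R}{\lambda}{Acc}$ (resp. $\PC{R}{\lambda}{Comb}$), the underlying pseudo-limit of categories and $\lambda$-right adjoints exists in $\cat$ and is locally $\lambda$-presentable by Theorem \ref{th:pres_rank}.\ref{th:pres_rank:right_adj_limits}; equip it with the supremum (point (1)) of the right transfers (point (3)) of the various $R_i$ along the projections $p_i$. A morphism from $\Ecal$ into this limit is right Quillen iff each composite $p_i \circ (-)$ is, so the wfs so defined has the correct universal property and the forgetful functor to $\cat_M$ visibly preserves it. Dually, for $\kappa$-small pseudo-diagrams in $\PC{L}{\kappa}{Acc}$ or $\PC{L}{\kappa}{Comb}$, use Theorem \ref{th:pres_rank}.\ref{th:pres_rank:left_adj_limits} for the underlying limit and equip it with the infimum (point (2)) of the left transfers (point (4)) of the $L_i$ along the $p_i$; the $\kappa$-smallness of the diagram exactly matches the $\kappa$-smallness required in (2) and the uncountable-$\kappa$ hypothesis needed in (4). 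The main obstacle is thus concentrated in the accessible cases of (2) and (4), where the intersection or pullback of left classes is not \emph{a priori} the left class of any wfs; the algebraic-wfs viewpoint, combined with Bourke-Garner's characterization and the fiber-product stability of Corollary \ref{cor:fiber_prod_comonadic}, is what makes the argument go through and simultaneously explains the role of the uncountable hypothesis.
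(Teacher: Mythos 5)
Your overall architecture agrees with the paper's for points \ref{th:appendix_Main:sup_wfs}, \ref{th:appendix_Main:right_transfer}, \ref{th:appendix_Main:right_limits}, and for the \emph{accessible} cases of \ref{th:appendix_Main:inf_wfs} and \ref{th:appendix_Main:left_transfer}: represent the wfs algebraically, pull back the comonadic $\kappa$-left adjoint $\Lb\text{-Coalg}\to\Ccal^\rightarrow$ along $V^\rightarrow$ (resp.\ form the fibre product over $\Ccal^\rightarrow$), and invoke \cref{cor:fiber_prod_comonadic} together with the Bourke--Garner double-categorical characterization of \cref{rk:Bourke_Garner_Beck}. Two issues, one minor and one genuine.

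The minor one: before forming the pullback you must arrange that \emph{every} $L$-map admits an $\Lb$-coalgebra structure, not merely that the retract closure of the comonad coalgebras is $L$. The paper does this by replacing $(\Lb,\Rb)$ with the algebraic wfs $(\Lb^\sharp,\Rb^\sharp)$ whose coalgebras are the coalgebras for $\Lb$ viewed as a \emph{copointed endofunctor} (the cloven $L$-maps). Without this step your final identification ``the underlying left class is exactly $V^{-1}(L)$'' does not follow, since the preimage of the coalgebra-admitting maps could be a proper subclass of $V^{-1}(L)$ whose retract closure in $\Ccal^\rightarrow$ is still too small. This is repairable in one sentence, but it is not automatic.

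The genuine gap is your treatment of the \emph{combinatorial} subcases of points \ref{th:appendix_Main:inf_wfs} and \ref{th:appendix_Main:left_transfer} (and hence the combinatorial half of \ref{th:appendix_Main:left_limits}). You assert that ``the comonads in question inherit a small set of generating arrows from the input data, which can be read off the pullback construction.'' This is not something that can be read off: the pullback argument produces a $\kappa$-\emph{accessible} algebraic wfs, i.e.\ one generated by a small \emph{category} of arrows under Garner's argument, and passing from that to cofibrant generation by a \emph{set} (so that the right class is $I^\wlp$ rather than $I^\alp$) is precisely the content of the Makkai--Rosick\'y theorem on limits of combinatorial weak factorization systems \cite{makkai2014cellular}, proved via their cellular-category machinery. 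The paper's route is the reverse of yours here: it first cites \cite{makkai2014cellular} to get point \ref{th:appendix_Main:left_limits} for $\PC{L}{\kappa}{Comb}$, and then \emph{deduces} the combinatorial cases of \ref{th:appendix_Main:inf_wfs} and \ref{th:appendix_Main:left_transfer} as special pseudo-pullbacks against the trivial $(\text{all maps},\text{iso})$ factorization system. As written, your proof establishes the accessible halves of \ref{th:appendix_Main:inf_wfs}, \ref{th:appendix_Main:left_transfer} and \ref{th:appendix_Main:left_limits} but leaves the combinatorial halves unproved.
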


\begin{proof}

Point \ref{th:appendix_Main:sup_wfs} is easy: if each $(L_i,R_i)$ is cofibrantly generated by a set (resp. a small category) $I_i$ between $\lambda$-presentable objects then, $\coprod I_i$ is a set (resp. a small category) of maps between $\lambda$-presentable object and by the small object argument it generates a weak factorization system whose right class is $\cap R_i$.

\bigskip

Point \ref{th:appendix_Main:right_transfer} is also very classical: if $F\colon \Ccal \leftrightarrows \Dcal\colon U$ is a $\lambda$-adjunction and $(L,R)$ is cofibrantly generated by a set (resp. a small category) $I$ of arrows between $\lambda$-presentable objects then the $F(I)$ are generators for the right transferred weak factorization system on $\Dcal$, and as $F$ is a $\lambda$-left adjoint they form a set (resp. a small category) of arrows between $\lambda$-presentable objects.

\bigskip

Point \ref{th:appendix_Main:right_limits} follows immediately from these two results: given a diagram $(\Ccal_i,L_i,R_i)$ of $\lambda$-accessible (resp. $\lambda$-combinatorial) weak factorization systems with $\lambda$-right Quillen functors between them (and a weight $W(i) \in \cat$), one can first take the limit of the underlying diagram of locally $\lambda$-presentable categories to obtain a locally $\lambda$-presentable category $\Ccal$ with for each $i$ (and each object $x \in W(i)$) a $\lambda$-right Quillen functor $p_{i,x}\colon \Ccal \rightarrow \Ccal_i$ and one puts on $\Ccal$ the supremum $(L,R)$ of all the right transferred weak factorization system along the $p_{i,x}$, i.e. $f \in R$ if and only if $p_{i,x}(f) \in R_i$ for all $i$ and all $x \in W(i)$. This weak factorization system exists and is $\lambda$-accessible (resp. $\lambda$-combinatorial) by the previous results, and one easily check that this is the limit.

\bigskip

Point \ref{th:appendix_Main:left_limits} for combinatorial weak factorization systems is essentially the main result of \cite{makkai2014cellular}. Removing the restriction to a fixed cardinal $\kappa$, this is exactly what their Corollary $3.3$ claims. While they do not explicitly claim the result for a fixed cardinal $\kappa$, the proof of their Theorem 3.2 shows that $\PC{L}{\kappa}{Comb}$ has pseudo-pullback (computed in $\cat_M$), whilst the proof of Corollary 3.3 shows how equifiers and inserters can be deduced from pseudopullbacks. Finally their argument for the product of combinatorial weak factorization systems also shows that $\kappa$-small products of $\kappa$-combinatorial weak factorization systems are $\kappa$-combinatorial as an object $(X_i) \in \prod \Ccal_i$ is $\kappa$-presentable if and only if each of its component $X_i$ is $\kappa$-combinatorial (when the product is $\kappa$-small). All these facts together imply the result.

\bigskip

Point \ref{th:appendix_Main:left_transfer} and \ref{th:appendix_Main:inf_wfs} for combinatorial weak factorization system can be deduced from point \ref{th:appendix_Main:left_limits}: As mentioned as Remark 3.8 in \cite{makkai2014cellular}, a left transfer along $F\colon \Ccal \rightarrow \Dcal$ can be seen as a pullback:

\[
\begin{tikzcd}
  \Ccal \ar[d] \ar[r] \ar[dr,phantom,"\lrcorner"very near start] & \Dcal \ar[d] \\
\Ccal_t \ar[r] & \Dcal_t \\
\end{tikzcd}
\]

Where $\Ccal_t$ and $\Dcal_t$ denotes $\Ccal$ and $\Dcal$ endowed with the trivial factorization system (all map, isomorphism), and the $\Ccal$ on the top left corner comes equipped with the left transfer weak factorization system. As all the functors above are $\kappa$-left adjoint and the factorization system (All map, Iso) is $\kappa$-combinatorial (it is generated by a set of representative of all maps between $\kappa$-presentable objects) the left transfered weak factorization system on $\Ccal$ is also $\kappa$-combinatorial. Similarly, if $(L_i,R_i)$ is a $\kappa$-small set of $\kappa$-combinatorial weak factorization system on a locally $\kappa$-presentable category, then their infimum can be obtained as the limit of the diagram of all $\Ccal_i=(\Ccal,L_i,R_i)$ all endowed with a map to $\Ccal_t$. 

\bigskip

Point \ref{th:appendix_Main:left_transfer} (and \ref{th:appendix_Main:right_transfer}) for accessible weak factorization systems is the main results of \cite{garner2020lifting}. We gave a sketch of their argument to show that it indeed gives the correct accessibility rank, but ultimately we refer to \cite{garner2020lifting} for the details.

Let $U\colon \Dcal \rightarrow \Ccal$ be a $\kappa$-left adjoint functor and let $(L,R)$ be a $\kappa$-accessible weak factorization system on $\Ccal$. We choose $(\Lb,\Rb)$ a $\kappa$-accessible algebraic realization (as per Condition \ref{th:acc_wfs_eq_def:algebraic} of \cref{th:acc_wfs_eq_def}) of this weak factorization system. One can assume that each $L$-map admits an $\Lb$-algebra structure: indeed if not one can consider instead the algebraic weak factorization system $(\Lb^\sharp,\Rb^\sharp)$ defined in \cite{garner2020lifting} such that $\Lb^\sharp$-coalgebras are exactly the cloven $\Lb$-maps, that is the $\Lb$-algebras, but with $\Lb$ seen as a copointed endofunctor instead of a comonad. It follows from \cref{th:pres_rank}.\ref{th:pres_rank:(co)algebras} that the comonad $\Lb^\sharp$, and hence the algebraic weak factorization system $(\Lb^\sharp,\Rb^\sharp)$, is $\kappa$-accessible: The functor $\Lb$-Clov $\rightarrow \Ccal^\rightarrow$ is a $\kappa$-left adjoint by \cref{th:pres_rank}.\ref{th:pres_rank:(co)algebras} as it is the forgetful functor from the category of coalgebras for a $\kappa$-accessible copointed endofunctor, and hence the induced comonad is $\kappa$-accessible. We form the pseudo-pullback:

\[
\begin{tikzcd}
\Tcal \ar[r] \ar[d] \ar[dr,phantom,"\lrcorner"very near start]  & \Lb\text{-Coalg} \ar[d] \\
  \Dcal^\rightarrow \ar[r] & \Ccal^\rightarrow \\
\end{tikzcd}
\]

which is a limit of $\kappa$-left adjoint between locally $\kappa$-presentable categories. \Cref{th:pres_rank}.\ref{th:pres_rank:left_adj_limits} shows that $\Tcal$ is locally $\kappa$-presentable and the functor $\Tcal \rightarrow \Dcal^\rightarrow$ is a $\kappa$-left adjoint. 

In \cite{garner2020lifting} it is shown, using the characterization of algebraic weak factorization system as double categories discussed in \cref{rk:Bourke_Garner_Beck} that $\Tcal \rightarrow \Dcal^\rightarrow$ is the category of left arrows of an algebraic weak factorization system. Indeed $\Tcal$ comes with the double category structure over $\Dcal^\rightarrow$ induced (along the pullback) by that of $\Lb$-Coalg, it is a comonadic $\kappa$-left adjoint by \cref{cor:fiber_prod_comonadic}, and the third condition follows immediately from the fact that $\Lb$-Coalg satisfies it.  Hence $\Tcal$ is the category of left arrows of some $\kappa$-accessible algebraic weak factorization system on $\Dcal$. This immediately implies the results: the left class of the underlying weak factorization are exactly the arrows of $\Dcal$ whose image in $\Ccal$ admits a $\Lb$-algebra structure, i.e. are $L$-maps by our assumption. As the map $\Tcal \rightarrow \Dcal^\rightarrow$ is $\kappa$-left adjoint the corresponding algebraic weak factorization system in $\kappa$-accessible.

\bigskip

We move to point \ref{th:appendix_Main:inf_wfs} for accessible weak factorization system. The proof is very similar to the case of point \ref{th:appendix_Main:left_transfer}, but do not seem to appear in the literature. We learned the argument below from John Bourke (personal communication). Ji\v{r}\'i Rosick\'y also found and sent us a different proof at the same time. 

Given $(L_i,R_i)_{i \in I}$ a $\kappa$-small collection of $\kappa$-accessible weak factorization systems on a locally $\kappa$-presentable category $\Ccal$. For each $i$ we chose $(\Lb_i,\Rb_i)$ a $\kappa$-accessible algebraic realization such that each $L_i$-map admits an $\Lb_i$-coalgebra structure, exactly as done for point \ref{th:appendix_Main:left_transfer} above. For each $i$ one has a comonadic $\kappa$-left adjoint functor $\Lb_i$-Coalg $\rightarrow \Ccal^\rightarrow$ which is the action on morphisms of a double category functor satisfying all the conditions mentioned in \cref{rk:Bourke_Garner_Beck}. One then form the large pseudo-pullback:

 \[ \Lcal \coloneqq \prod_{\Ccal^\rightarrow} \left( \Lb_i\text{-Coalg} \right),\]
by \cref{cor:fiber_prod_comonadic} the resulting functor $\Lcal \rightarrow \Ccal^\rightarrow$ is a comonadic $\kappa$-left adjoint and it still satisfies the other conditions from \cref{rk:Bourke_Garner_Beck}, hence it is the category of left arrows of a $\kappa$-accessible algebraic weak factorization system. An arrow in $\Ccal$ lifts to an object of $\Lcal$ if and only if it admits a $\Lb_i$-coalgebra structure for all $i$, hence the underlying weak factorization system of this $\kappa$-accessible algebraic factorization is indeed the infimum of all the $(L_i,R_i)$.

\bigskip

Point \ref{th:appendix_Main:left_limits} for accessible weak factorization systems, i.e. for the category $\PC{L}{\kappa}{Acc}$ can be deduced from points \ref{th:appendix_Main:inf_wfs} and \ref{th:appendix_Main:left_transfer} and \cref{th:pres_rank}.\ref{th:pres_rank:left_adj_limits} in the exact dual way as we proved point \ref{th:appendix_Main:right_limits} for combinatorial weak factorization system from points \ref{th:appendix_Main:sup_wfs} and \ref{th:appendix_Main:right_transfer} above.

\end{proof}

\begin{remark}\label{rk:changin_the_def_of_accessible_wfs} The reason we chose point \ref{th:acc_wfs_eq_def:A_Garner_generated} of \cref{th:acc_wfs_eq_def} as our definition of $\lambda$-accessible weak factorization system (when $\lambda$ is countable) is because it gives very simple proofs for points \ref{th:appendix_Main:sup_wfs} and \ref{th:appendix_Main:right_transfer} of \cref{th:appendix_Main}. It should be noted however that both results still hold if we use point \ref{th:acc_wfs_eq_def:algebraic} of \cref{th:acc_wfs_eq_def} as our definition instead: this is proved using the dual version of the arguments we gave above for points \ref{th:appendix_Main:left_transfer} and \ref{th:appendix_Main:inf_wfs} in the accessible case. We have not looked for counter-example but we believe that on the contrary points \ref{th:appendix_Main:sup_wfs} and \ref{th:appendix_Main:right_transfer} do not hold for $\lambda= \omega$ if we use point \ref{th:acc_wfs_eq_def:accessible} of \cref{th:acc_wfs_eq_def} as our definition, which is the reason why we have not used it despite being apparently the most natural choice to make. Of course when $\lambda$ is uncountable all these definitions become equivalent by \cref{th:acc_wfs_eq_def}, and we no longer need this discussion.
\end{remark}

\section*{Acknowledgment}

I'm gratefull to Ji\v{r}\'i Rosick\'y, John Bourke and Ivan Di Liberti for some help regarding the appendix detailed there. I'm also gratefull to Michael Batanin and David White for sharing with me a preliminary version of \cite{batanin2020Bousfield}. I thanks David White for some helpful discusion on the notion of semi-model category and John Bourke for his comments on earlier draft of the paper, and for pointing out many typos.

\bibliography{/home/henry/Dropbox/Latex/Biblio}{}
\bibliographystyle{plain}

\end{document}